\DeclareMathOperator{\aut}{Aut}
\DeclareMathOperator{\cay}{Cay}
\DeclareMathOperator{\cyc}{Cyc}
\DeclareMathOperator{\id}{id}
\DeclareMathOperator{\iso}{Iso}
\DeclareMathOperator{\orb}{Orb}
\DeclareMathOperator{\rk}{rk}
\DeclareMathOperator{\Span}{Span}
\DeclareMathOperator{\syl}{Syl}
\DeclareMathOperator{\sym}{Sym}
\DeclareMathOperator{\rad}{rad}
\DeclareMathOperator{\Sup}{Sup}
\DeclareMathOperator{\DCI}{DCI}
\DeclareMathOperator{\CI}{CI}
\def\r{\mathrm{right}}
\DeclareMathOperator{\CA}{C1}
\DeclareMathOperator{\CAA}{C2}
\DeclareMathOperator{\CAAA}{C3}
\def\@seccntformat#1{\csname the#1\endcsname. } 
\def\@biblabel#1{#1.}
\title{The Cayley isomorphism property for the group $C_4\times C_p^2$}
\author{Grigory Ryabov}
\address{Sobolev Institute of Mathematics, Novosibirsk, Russia}
\address{Novosibirsk State University, Novosibirsk, Russia}
\email{gric2ryabov@gmail.com}
\thanks{The work is supported by the Russian Foundation for Basic Research (project 18-31-00051)}
\date{}
\newtheorem{prop}{Proposition}[section]
\newtheorem{theo}{Theorem}[section]
\newtheorem{lemm}[prop]{Lemma}
\theoremstyle{definition}
\begin{document}

\vspace{\baselineskip}
\vspace{\baselineskip}

\vspace{\baselineskip}

\vspace{\baselineskip}

\begin{abstract}
A finite group $G$ is called a \emph{$\DCI$}-group if two Cayley digraphs over $G$ are isomorphic if and only if their connection sets are conjugate by a group automorphism. We prove that the group $C_4\times C_p^2$, where $p$ is a prime, is a $\DCI$-group if and only if $p\neq 2$. 
\\
\\
\textbf{Keywords}: Isomorphisms, $\DCI$-groups, Schur rings.
\\
\textbf{MSC}: 05C25, 05C60, 20B25.
\end{abstract}

\maketitle

\section{Introduction}

Let $G$ be a finite group. Given $S\subseteq G$, the \emph{Cayley digraph} $\cay(G,S)$ is defined to be the digraph with the vertex set $G$ and the arc set $\{(g,sg): g\in G, s\in S\}$. Two Cayley digraphs $\cay(G,S)$ and $\cay(G,T)$ are called \emph{Cayley isomorphic} if there exists $\varphi\in\aut(G)$ such that $S^{\varphi}=T$. It is easy to see that in this case $\varphi$ is an isomorphism from $\cay(G,S)$ to $\cay(G,T)$. So if two Cayley digraphs are  Cayley isomorphic then they are isomorphic. However, the converse statement is not true in general (see~\cite{AlPar, ET}). A subset $S\subseteq G$ is called a \emph{$\CI$-subset} if for each $T\subseteq G$, $\cay(G,S)\cong\cay(G,T)$  implies that $\cay(G,S)$ and $\cay(G,T)$ are Cayley isomorphic. A finite group $G$ is called a \emph{$\DCI$-group} (\emph{$\CI$-group}, respectively) if each subset of $G$ (each inverse-closed subset of $G$, respectively) is a $\CI$-subset.

One of the motivations to study $\DCI$-groups comes from the Cayley graph isomorphism problem. Let $G$ be a $\DCI$-group. Then to determine whether two Cayley digraphs $\cay(G,S)$ and $\cay(G,T)$ are isomorphic, we only need to check whether there exists $\varphi\in \aut(G)$ such that $S^{\varphi}=T$. Usually, the latter is much easier. For more details we refer the readers to~\cite[Section~3]{Li} and~\cite[Section~5]{MP1}.

The Cayley representation problem of graphs (see~\cite{Li,NP,Ry}) which seems to be very hard in general becomes easier in case when the input group is a $\DCI$-group. Recall that a \emph{Cayley representation} of a graph $\Gamma$ over a group $G$ is defined to be an isomorphism from $\Gamma$ to a Cayley graph over $G$. Two Cayley representations of $\Gamma$ are called \emph{equivalent} if the images of $\Gamma$ under these representations are Cayley isomorphic. The Cayley representation problem can be formulated as follows: given a group $G$ and a graph $\Gamma$ find a full set of non-equivalent Cayley representations of $\Gamma$ over $G$. From the definition of a $\DCI$-group it follows that every graph has at most one Cayley representation over a $\DCI$-group up to equivalence. More information on the Cayley representation problem and its connection with $\DCI$-groups can be found in~\cite{Li,NP}.

The starting point of the investigation of $\DCI$-groups was \'Ad\'am's conjecture~\cite{Adam}. In our terms, he conjectured that every cyclic group is a $\DCI$-group. However, this was proved to be false and first examples of non-$\DCI$ cyclic groups were found by Elspas and Turner~\cite{ET}. In~\cite{BF} Babai and Frankl posed the problem of determining of all finite $\DCI$- and $\CI$-groups. Accumulating results of several mathematicians, Li, Lu, and P\'alfy reduced the candidates of $\DCI$- and $\CI$-groups to a restricted list~\cite{LiLuP}. The most of the results on $\DCI$- and $\CI$-groups can be found in the survey paper~\cite{Li}.

One of the main steps towards the classification of all $\DCI$-groups is to determine all abelian $\DCI$-groups. The complete classification of cyclic $\DCI$-groups was obtained by Muzychuk in~\cite{M1,M2}. He proved that a cyclic group of order~$n$ is a $\DCI$-group if and only if $n=k$ or $n=2k$, where $k$ is square-free.

The cyclic group of order~$n$ is denoted by $C_n$. The classes of all finite abelian groups whose every Sylow subgroup is elementary abelian and of all finite abelian groups whose every Sylow subgroup is elementary abelian or isomorphic to $C_4$ are denoted by $\mathcal{E}$ and $\mathcal{E}_c$ respectively. From~\cite[Theorem~1.1]{KM} it follows that every abelian $\DCI$-group belongs to $\mathcal{E}_c$. However, the classification of all abelian $\DCI$-groups is far from complete. All known noncyclic abelian $\DCI$-groups (see~\cite{AlN,CLi,Dob,God,FK,HM,KM,KR2,MS}) belong to $\mathcal{E}$. In this paper we find the first example of an abelian noncyclic $\DCI$-group from $\mathcal{E}_c\setminus \mathcal{E}$. More precisely, we prove the following statement.

\begin{theo}\label{main}
Let $p$ be a prime. Then the group $C_4\times C_p^2$ is a $\DCI$-group if and only if $p\neq 2$.
\end{theo}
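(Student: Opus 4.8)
The plan is to split the statement into its two directions, the easy one first. The ``only if'' direction amounts to showing $C_4\times C_2^2$ is not a $\DCI$-group; this should follow from known results, since the relevant non-$\DCI$ examples among small $2$-groups (or abelian $2$-groups containing $C_4\times C_2^2$ as a quotient or subgroup) are documented in the literature — e.g.\ $C_8$, $C_4\times C_2$ and related groups fail the $\CI$-property, and in fact it is known that $C_2^5$ is the largest elementary abelian $2$-group that is $\DCI$, while groups like $C_4\times C_2^2$ already carry the kind of Schur ring pathology that destroys the $\CI$-property. I would simply exhibit an explicit pair of non-Cayley-isomorphic but isomorphic Cayley digraphs over $C_4\times C_2^2$, or cite the precise reference; this is a finite verification and not the heart of the paper.

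The substance is the ``if'' direction: for $p$ an odd prime, every subset $S$ of $G=C_4\times C_p^2$ is a $\CI$-subset. The standard machinery here is the Schur-ring (S-ring) approach of Babai, P\'alfy, and especially Muzychuk: to prove $G$ is a $\DCI$-group it suffices to show that for every S-ring $\cA$ over $G$, all Cayley representations of $\cA$ (equivalently, all regular subgroups of $G$ inside the automorphism group $\aut(\cA)$ that are isomorphic to $G$) are conjugate in $\aut(\cA)$ — or, in the solvability-free formulation, that $\cA$ is ``$\CI$'' in the sense that any two such regular subgroups are $\aut(\cA)$-conjugate. So the first real step is to set up this reduction precisely and recall the criterion (Babai's lemma: $S$ is a $\CI$-subset iff any two regular copies of $G$ in $\aut(\cay(G,S))$ are conjugate in that automorphism group), and then reduce the problem to classifying S-rings over $G$.

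The core of the argument will therefore be a structural classification of S-rings over $C_4\times C_p^2$. I would proceed by the usual case analysis driven by the possible ``shapes'': S-rings that are decomposable as tensor (direct) products over the Sylow decomposition $C_4\times C_p^2$, S-rings arising from the wreath/generalized wreath product construction, S-rings of ``Cayley-like'' or cyclotomic type over the $C_p^2$ part, and the genuinely primitive or exceptional cases. For the $C_p^2$ factor one can lean on the known classification of S-rings over elementary abelian groups of rank $2$ (Kov\'acs, and the $\CI$-property of $C_p^2$ itself, due to Godsil), and for the $C_4$ factor everything is small and explicit. The key technical work is handling the S-rings that genuinely mix the two factors — those where the $C_2$-section of the $C_4$-part interacts with the $C_p^2$-part — and showing that in each such case every regular $G$-subgroup of $\aut(\cA)$ is conjugate to the ``canonical'' one. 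The main obstacle, I expect, is precisely this mixing: the presence of the $C_4$ (as opposed to $C_2^2$) obstructs a naive reduction to coprime factors, and one must rule out, or directly analyze, S-rings whose structure depends on the $\mZ/4$-module structure of the $p$-part; this is exactly where $p=2$ would break things and where the paper must do genuinely new work beyond the $\mathcal E$-case. Once the classification shows no ``bad'' S-ring exists, conjugacy of regular subgroups follows case by case, often by invoking the $\DCI$-property of the factors $C_4$ and $C_p^2$ together with a lemma on $\CI$-property of (generalized) wreath and tensor products.

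Finally, I would assemble these pieces: for each S-ring $\cA$ over $G$ in the classification, verify the $\CI$-condition on regular subgroups of $\aut(\cA)$, using that $C_4$ is a $\DCI$-group (trivially, or by Muzychuk's cyclic classification), that $C_p^2$ is a $\DCI$-group, and the gluing lemmas for wreath/tensor decompositions; the exceptional non-decomposable S-rings get handled directly by computing their automorphism groups. Combining the ``if'' and ``only if'' parts yields the theorem.
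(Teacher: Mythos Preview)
Your outline is broadly the paper's approach --- reduce to $S$-rings, classify, then verify the $\CI$-property case by case via tensor and generalized wreath decompositions --- but you are missing one reduction that the paper's classification genuinely depends on. You propose to classify \emph{all} $S$-rings over $G$; the paper does not. It invokes Lemma~\ref{cimin} to reduce to schurian $S$-rings $\mathcal{A}=V(K,G)$ with $K\in\Sup_2^{\min}(G_\r)$, and this minimality is not cosmetic: it is what feeds Lemma~\ref{minpring}, which forces $\mathcal{A}_{G/H}$ to be a $p$-$S$-ring whenever $G/H$ is a $p$-group. That conclusion is used repeatedly (see Propositions~\ref{proof1}--\ref{proof4}) to pin down the structure of $\mathcal{A}_C$, $\mathcal{A}_H$, $\mathcal{A}_{G/C}$, $\mathcal{A}_{G/H}$, and various other quotients; without it the case analysis you sketch would not close. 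So ``classify all $S$-rings over $C_4\times C_p^2$'' is both more than you need and more than you can afford.

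Second, you treat the ``gluing lemmas'' for tensor and generalized wreath products as available off the shelf. The existing results (\cite{KM,KR1}) are stated for groups in $\mathcal{E}$, and $C_4\times C_p^2\in\mathcal{E}_c\setminus\mathcal{E}$. The paper has to re-prove both lemmas over $\mathcal{E}_c$ (Lemmas~\ref{citens} and~\ref{cigwr}); the generalized wreath case in particular requires a new argument (Lemma~\ref{orders}) about lifting elements of the right order through quotients when a $C_4$ factor is present. You correctly anticipate that the $C_4$ vs.\ $C_2^2$ distinction is where the new work lies, but you locate it in the classification rather than in these transfer lemmas. Finally, the classification does \emph{not} terminate in ``exceptional non-decomposable $S$-rings handled by direct automorphism-group computation'': every $\mathcal{A}$ lands in rank~$2$, a tensor product, or a generalized wreath product satisfying one of three explicit conditions $(\CA)$--$(\CAAA)$, and the $\CI$-property follows uniformly from Lemmas~\ref{trivial}--\ref{restr}, not from ad hoc computations.
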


To prove Theorem~\ref{main}, we use the $S$-ring approach which was suggested by Klin and P\"{o}schel~\cite{KP} and developed in some subsequent papers~\cite{HM,KM,KR1,KR2,MS}. An \emph{$S$-ring} over a group $G$ is a subring of the group ring $\mathbb{Z}G$ which is a free $\mathbb{Z}$-module spanned by a partition of $G$ closed under taking inverse and containing the identity element of $G$ as a class. The notion of an $S$-ring goes back to Schur~\cite{Schur} and Wielandt~\cite{Wi}. To check that a given group $G$ is a $\DCI$-group, it is sufficient to show that every $S$-ring from a certain family of $S$-rings over $G$ is a $\CI$-$S$-ring (see Section~4). 

We finish the introduction with a brief outline of the paper. Sections~2, 3, and~4 contain a necessary background of $S$-rings, especially isomorphisms of $S$-rings, schurian $S$-rings, and $\CI$-$S$-rings. Section~5 is devoted to the tensor and generalized wreath products of $S$-rings. In this section we prove sufficient conditions of $\CI$-property for the tensor and generalized wreath products over a group from $\mathcal{E}_c$ (Lemma~\ref{citens} and Lemma~\ref{cigwr}) that extend known sufficient conditions of $\CI$-property for these constructions over a group from $\mathcal{E}$ (see~\cite{KM,KR1}). Sections~6 and~7 are concerned with $S$-rings over cyclic groups and $p$-$S$-rings respectively. In Section~8 we describe certain families of $S$-rings over $C_4\times C_p^2$. In Section~9 we prove Theorem~\ref{main}.
\\
\\
\\
{\bf Notation.}

The group of all permutations of a set $\Omega$ is denoted by $\sym(\Omega)$.

The set of non-identity elements of a group $G$ is denoted by  $G^\#$.

Let $X\subseteq G$. The element $\sum_{x\in X} {x}$ of the group ring $\mathbb{Z}G$ is denoted by $\underline{X}$.

The set $\{x^{-1}:x\in X\}$ is denoted by $X^{-1}$.

If  $m\in \mathbb{Z}$ then the set $\{x^m: x \in X\}$ is denoted by $X^{(m)}$.

The subgroup of $G$ generated by $X$ is denoted by $\langle X\rangle$; we also set $\rad(X)=\{g\in G:\ gX=Xg=X\}$.

The projections of $X\subseteq A\times B$ to $A$ and $B$ are denoted by $X_A$ and $X_B$ respectively.

The order of $g\in G$ is denoted by $|g|$.

The Sylow $p$-subgroup of $G$ is denoted by $\syl_p(G)$.

Given a set $X\subseteq G$ the set $\{(g,xg): x\in  X, g\in G\}$ of arcs of the Cayley digraph $\cay(G,X)$ is denoted by $R(X)$.

The subgroup of $\sym(G)$ consisting of all right translations of $G$ is denoted by $G_{\r}$.

The set $\{K \leq \sym(G):~ K \geq G_{\r}\}$ is denoted by $\Sup(G_{\r})$.

For a set $\Delta\subseteq \sym(G)$ and a section $S=U/L$ of $G$ we set 
$$\Delta^S=\{f^S:~f\in \Delta,~S^f=S\},$$
where $S^f=S$ means that $f$ permutes the $L$-cosets in $U$ and $f^S$ denotes the bijection of $S$ induced by $f$.

If $K\subseteq \sym(\Omega)$ and $\alpha\in \Omega$ then the set $\{f\in K:~\alpha^f=\alpha\}$ is denoted by~$K_{\alpha}$. 

If $K\leq \sym(\Omega)$ then the set of all orbits of $K$ on $\Omega$ is denoted by $\orb(K,\Omega)$.

The cyclic group of order $n$ is denoted by  $C_n$.

The class of all finite abelian groups whose every Sylow subgroup is elementary abelian is denoted by $\mathcal{E}$.

The class of all finite abelian groups whose every Sylow subgroup is elementary abelian or isomorphic to $C_4$ is denoted by $\mathcal{E}_c$.

\section{$S$-rings}

In this section we provide a background of $S$-rings. The most of the material of this section is contained in~\cite{KR2, MP2}. For more information on $S$-rings we refer the readers also to~\cite[Section~2.4]{CP} and~\cite{MP1}. 

Let $G$ be a finite group and $\mathbb{Z}G$ the integer group ring. Denote the identity element of $G$ by $e$. A subring  $\mathcal{A}\leq\mathbb{Z} G$ is called an \emph{$S$-ring (a Schur ring)} over $G$ if there exists a partition $\mathcal{S}(\mathcal{A})$ of~$G$ such that:

$(1)$ $\{e\}\in\mathcal{S}(\mathcal{A})$,

$(2)$  if $X\in\mathcal{S}(\mathcal{A})$ then $X^{-1}\in\mathcal{S}(\mathcal{A})$,

$(3)$ $\mathcal{A}=\Span_{\mathbb{Z}}\{\underline{X}:\ X\in\mathcal{S}(\mathcal{A})\}$.

\noindent The elements of $\mathcal{S}(\mathcal{A})$ are called the \emph{basic sets} of  $\mathcal{A}$ and the number $\rk(\mathcal{A})=|\mathcal{S}(\mathcal{A})|$ is called the \emph{rank} of  $\mathcal{A}$. If $X,Y \in \mathcal{S}(\mathcal{A})$ then $XY \in \mathcal{S}(\mathcal{A})$ whenever $|X|=1$ or $|Y|=1$. If $X,Y,Z\in\mathcal{S}(\mathcal{A})$ then   the number of distinct representations of $z\in Z$ in the form $z=xy$ with $x\in X$ and $y\in Y$ is denoted by $c^Z_{X,Y}$. Note that if $X$ and $Y$ are basic sets of $\mathcal{A}$ then $\underline{X}~\underline{Y}=\sum_{Z\in \mathcal{S}(\mathcal{A})}c^Z_{X,Y}\underline{Z}$. Therefore the numbers  $c^Z_{X,Y}$ are structure constants of $\mathcal{A}$ with respect to the basis $\{\underline{X}:\ X\in\mathcal{S}(\mathcal{A})\}$.

Let $\mathcal{A}$ be an $S$-ring over a group $G$. A set $X \subseteq G$ is called an \emph{$\mathcal{A}$-set} if $\underline{X}\in \mathcal{A}$. A subgroup $H \leq G$ is called an \emph{$\mathcal{A}$-subgroup} if $H$ is an $\mathcal{A}$-set. Given $\mathcal{A}$-sets $X$ and $Y$, the set $XY$ is also an $\mathcal{A}$-set. From the definition it follows that the intersection of $\mathcal{A}$-subgroups is an $\mathcal{A}$-subgroup. One can verify that for each $\mathcal{A}$-set $X$ the groups  $\langle X \rangle$ and $\rad(X)$ are $\mathcal{A}$-subgroups. 

Let $L \unlhd U\leq G$. A section $U/L$ is called an \emph{$\mathcal{A}$-section} if $U$ and $L$ are $\mathcal{A}$-subgroups. If $S=U/L$ is an $\mathcal{A}$-section then the module
$$\mathcal{A}_S=Span_{\mathbb{Z}}\left\{\underline{X}^{\pi}:~X\in\mathcal{S}(\mathcal{A}),~X\subseteq U\right\},$$
where $\pi:U\rightarrow U/L$ is the canonical epimorphism, is an $S$-ring over $S$.

\begin{lemm}~\cite[Lemma~2.1]{EKP}\label{intersection}
Let $\mathcal{A}$ be an $S$-ring over a group $G$, $H$ an $\mathcal{A}$-subgroup of $G$, and $X \in \mathcal{S}(\mathcal{A})$. Then the number $|X\cap Hx|$ does not depend on $x\in X$.
\end{lemm}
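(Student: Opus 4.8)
The plan is to argue via structure constants of the $S$-ring, using the fact that $\underline{H}$ lies in $\mathcal{A}$ because $H$ is an $\mathcal{A}$-subgroup. First I would fix $x\in X$ and compute the coefficient of $x$ in the product $\underline{H}\,\underline{X}$ inside $\mathbb{Z}G$. By definition this coefficient equals the number of ways to write $x=h x'$ with $h\in H$ and $x'\in X$; since $h=x(x')^{-1}$ is determined by the pair, this count is exactly $|X\cap Hx|$, the quantity we wish to show is independent of $x$.

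Next I would invoke closure of $\mathcal{A}$ under multiplication: since $\underline{H},\underline{X}\in\mathcal{A}$, we have $\underline{H}\,\underline{X}\in\mathcal{A}$, so this element is a $\mathbb{Z}$-linear combination $\sum_{Z\in\mathcal{S}(\mathcal{A})} a_Z\,\underline{Z}$ of the basic sets, with integer coefficients $a_Z$ (one can take $a_Z=c^{Z}_{H,X}$ in the notation of the paper, reading $\underline{H}$ as a sum of basic sets of size one, or simply observe directly that $\underline{H}\,\underline{X}$ is an $\mathcal{A}$-set). The key consequence is that the coefficient of any $g\in G$ in $\underline{H}\,\underline{X}$ depends only on the basic set containing $g$: if $g,g'$ lie in the same $Z\in\mathcal{S}(\mathcal{A})$, their coefficients are both equal to $a_Z$.

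Finally I would combine the two observations. All the elements $x\in X$ lie in the single basic set $X$, so by the previous paragraph the coefficient of $x$ in $\underline{H}\,\underline{X}$ is the same for every $x\in X$; by the first paragraph that coefficient equals $|X\cap Hx|$. Hence $|X\cap Hx|$ is independent of the choice of $x\in X$, as claimed. There is essentially no obstacle here: the only point requiring a little care is making sure the bookkeeping of coefficients in $\underline{H}\,\underline{X}$ correctly identifies the count $|X\cap Hx|$ and that one genuinely uses $H$ being an $\mathcal{A}$-\emph{subgroup} (so that $\underline{H}\in\mathcal{A}$ and the product stays in $\mathcal{A}$), rather than merely $H$ being a subgroup.
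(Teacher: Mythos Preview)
Your argument is correct and is the standard proof of this fact: the coefficient of $x$ in $\underline{H}\,\underline{X}\in\mathcal{A}$ is constant on the basic set $X$, and a direct count shows that coefficient equals $|X\cap Hx|$. Note that the paper itself does not prove this lemma but merely cites it from \cite[Lemma~2.1]{EKP}, so there is no in-paper proof to compare against; your write-up is exactly the argument one would expect to find in the cited source.
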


\begin{lemm}~\cite[Theorem~2.6]{MP2} \label{separat}
Let $\mathcal{A}$ be an $S$-ring over a group $G$. Suppose that $X\in \mathcal{S}(\mathcal{A})$ and $H\leq G$ are such that
$$X\cap H \neq \varnothing,~X \setminus H \neq \varnothing,~\text{and}~\langle X\cap H\rangle \leq \rad(X\setminus H).$$
Then $X=\langle X \rangle \setminus \rad(X)$ and $\rad(X) \leq H$.
\end{lemm}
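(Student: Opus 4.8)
The plan is to work inside the $\mathcal{A}$-subgroup $\langle X\rangle$, so I first replace $G$ by $\langle X\rangle$ and $H$ by $H\cap\langle X\rangle$; this changes neither $X\cap H$ nor $X\setminus H$, and both $\langle X\rangle$ and $\rad(X)$ remain $\mathcal{A}$-subgroups. Write $Y=X\cap H$, $Z=X\setminus H$, $P=\langle Y\rangle$ and $R=\rad(Z)$, so that $X=Y\sqcup Z$ with $Y\neq\varnothing\neq Z$, and the hypothesis reads $P\le R$. Since $Y\subseteq H$ we get $P\le H$, hence $X\cap P=Y$ and $Z=X\setminus P$; and since $P\le R=\rad(Z)$ we get the set identities $PZ=ZP=Z$ together with the group-ring identities $\underline{Y}\,\underline{Z}=\underline{Z}\,\underline{Y}=|Y|\,\underline{Z}$ and $\underline{P}\,\underline{X}=|Y|\,\underline{P}+|P|\,\underline{Z}$ in $\mathbb{Z}G$. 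My target is to reduce the two conclusions to the single pair of statements
\begin{equation*}
\text{(I)}\quad Z=\langle X\rangle\setminus P,\qquad\qquad\text{(II)}\quad \rad(X)=P\setminus Y .
\end{equation*}

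Granting (I) and (II), the conclusions follow quickly. Indeed, (I) says $\langle X\rangle=P\sqcup Z$; intersecting with $X$ and using $X\cap P=Y$ and $Z\subseteq X$ gives $\langle X\rangle\setminus X=P\setminus Y$, which by (II) equals $\rad(X)$, i.e. $X=\langle X\rangle\setminus\rad(X)$. Moreover $\rad(X)=P\setminus Y\subseteq P\le H$, giving $\rad(X)\le H$. Thus it remains to prove (I) and (II). For (II) I would verify directly that $N:=P\setminus Y$ satisfies $NX=XN=X$: on the $Z$-part this is immediate from $N\subseteq P\le R=\rad(Z)$, and on the $Y$-part it follows from the coset structure forced by $P=\langle Y\rangle$ together with $e\in N$; maximality of the radical then traps $\rad(X)$ between $N$ and $\langle X\rangle\setminus X$, and (I) closes the gap.

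The heart of the argument, and the step I expect to be the main obstacle, is claim (I): that $Z$ already exhausts every element of $\langle X\rangle$ lying outside the subgroup $P=\langle X\cap H\rangle$. The difficulty is structural: none of $H$, $P$ or $R$ is an $\mathcal{A}$-subgroup, so neither Lemma~\ref{intersection} nor the section/quotient machinery applies to them directly; in particular one cannot first pass to $\langle X\rangle/\rad(X)$, since the containment $\rad(X)\le H$ needed to make that quotient compatible with $H$ is part of what must be proved. The plan is therefore to show that $P\cup Z$ is closed under multiplication and inversion, whence it is a subgroup containing the generating set $X$ and must equal $\langle X\rangle$. Closure under the mixed and $P$-internal products is already recorded above; the one delicate case is $z_1z_2\in P\cup Z$ for $z_1,z_2\in Z$, which I would force by feeding the absorption hypothesis $\langle X\cap H\rangle\le\rad(X\setminus H)$ into the regularity of the basic set $X$ --- analyzing the $\mathcal{A}$-elements $\underline{X}^{\,2}$ and $\underline{X}\,\underline{X}^{-1}$ in tandem with the identity $\underline{Y}\,\underline{Z}=|Y|\,\underline{Z}$ to exclude any product of two $Z$-elements from leaving $P\cup Z$. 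Extracting this containment from the non-$\mathcal{A}$ data $P$ and $R$ using only the $\mathcal{A}$-regularity of $X$ is precisely where the real work lies.
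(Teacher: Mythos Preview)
The paper does not prove this lemma at all; it is quoted verbatim from \cite[Theorem~2.6]{MP2} and used as a black box. So there is no ``paper's proof'' to compare against, and your proposal must be judged on its own.

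As it stands, your proposal is a plan with two genuine gaps.

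\textbf{Gap in (II).} Even granting (I), your verification that $N:=P\setminus Y$ lies in $\rad(X)$ is not valid. You need $NY\subseteq Y$, and you assert this ``follows from the coset structure forced by $P=\langle Y\rangle$ together with $e\in N$''. It does not: take $P=S_3$ and $Y=\{(12),(13)\}$; then $P=\langle Y\rangle$, $e\in N=P\setminus Y$, yet $(23)(12)=(132)\notin Y$. So the abstract group-theoretic facts you invoke are insufficient; any correct argument here must use that $X$ is a \emph{basic} set, which your sketch of (II) never does. Incidentally, the conclusion $\rad(X)\le H$ can be obtained directly, before (I), by a short argument you may find useful: if $r\in\rad(X)\setminus H$ then $rY\subseteq X\setminus H=Z$, and since $Y^{-1}\subseteq P\le\rad(Z)$ we get $r\in r(YY^{-1})\subseteq ZP=Z\subseteq X$, contradicting $\rad(X)\cap X=\varnothing$.

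\textbf{Gap in (I).} You correctly isolate the decisive step as $z_1z_2\in P\cup Z$ for $z_1,z_2\in Z$, but you do not prove it; the paragraph about ``feeding the absorption hypothesis into the regularity of $X$'' and ``analyzing $\underline{X}^{\,2}$ and $\underline{X}\,\underline{X}^{-1}$'' is a description of intent, not an argument. Since neither $\underline{Y}$ nor $\underline{Z}$ lies in $\mathcal{A}$, writing $\underline{X}^{\,2}=\underline{Y}^{\,2}+\underline{Y}\,\underline{Z}+\underline{Z}\,\underline{Y}+\underline{Z}^{\,2}$ does not by itself let you read off that the support of $\underline{Z}^{\,2}$ is contained in $P\cup Z$; you would need to explain precisely which $\mathcal{A}$-identities force this containment. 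Until that is done, the proposal is an outline rather than a proof.
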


We say that a set $X\subseteq G$ is rational if $X^{(m)}=X$ for every $m\in \mathbb{Z}$ coprime to~$|G|$. The next two statements are known as the Schur theorems on multipliers (see \cite[Theorem~23.9]{Wi}).

\begin{lemm} \label{burn}
Let $\mathcal{A}$ be an $S$-ring over an abelian group  $G$. Then $X^{(m)}\in \mathcal{S}(\mathcal{A})$  for every  $X\in \mathcal{S}(\mathcal{A})$ and every  $m\in \mathbb{Z}$ coprime to $|G|$.
\end{lemm}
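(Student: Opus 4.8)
The plan is to reduce the statement to the case where $m=p$ is a prime not dividing $|G|$ and then exploit the Frobenius endomorphism of the group algebra $\mathbb{F}_pG$. Since $G$ is abelian, $\mathbb{F}_pG$ is a commutative ring of characteristic $p$, so the ``freshman's dream'' applies: for $a=\sum_{g\in G}a_g g\in\mathbb{Z}G$ one has $a^p\equiv\sum_{g\in G}a_g^p\,g^p\pmod{p\mathbb{Z}G}$, and by Fermat's little theorem $a_g^p\equiv a_g\pmod p$. Applying this to $a=\underline{X}$ for $X\in\mathcal{S}(\mathcal{A})$, and using that $g\mapsto g^p$ is a bijection of $G$ because $\gcd(p,|G|)=1$ (so that $X^{(p)}$ is a genuine $\{0,1\}$-set of size $|X|$), one obtains
$$\underline{X}^{\,p}\equiv\underline{X^{(p)}}\pmod{p\mathbb{Z}G}.$$

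Next I would use that $\mathcal{A}$ is a subring, so $\underline{X}^{\,p}\in\mathcal{A}$, together with the fact that $\mathcal{A}$ has the distinguished $\mathbb{Z}$-basis $\{\underline{Y}:Y\in\mathcal{S}(\mathcal{A})\}$ whose elements are $\{0,1\}$-vectors with pairwise disjoint supports partitioning $G$. Writing $\underline{X}^{\,p}=\sum_{Y}c_Y\underline{Y}$ with $c_Y\in\mathbb{Z}$ and reducing the displayed congruence modulo $p$, the disjointness of the supports forces, for each basic set $Y$, either $Y\subseteq X^{(p)}$ and $\overline{c_Y}=1$, or $Y\cap X^{(p)}=\varnothing$ and $\overline{c_Y}=0$. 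Hence $X^{(p)}$ is a union of basic sets, i.e. an $\mathcal{A}$-set; the same argument shows more generally that $X^{(p)}$ is an $\mathcal{A}$-set whenever $X$ is any $\mathcal{A}$-set. Since $(X^{(p)})^{(q)}=X^{(pq)}$, and since every $m$ coprime to $|G|$ can, after adding a suitable multiple of $|G|$ (which does not change $X^{(m)}$), be taken positive and hence factored into primes none of which divides $|G|$, iteration yields: $X^{(m)}$ is an $\mathcal{A}$-set for every $\mathcal{A}$-set $X$ and every $m$ coprime to $|G|$.

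It remains to upgrade ``$\mathcal{A}$-set'' to ``basic set'' when $X\in\mathcal{S}(\mathcal{A})$, and this is the only point requiring a further idea. I would first record the elementary fact that a basic set is indecomposable: if $A\subseteq X\in\mathcal{S}(\mathcal{A})$ and $\underline{A}\in\mathcal{A}$, then expanding $\underline{A}$ in the basis and comparing supports (only the basic set $X$ meets $X$) forces $A=\varnothing$ or $A=X$. Now fix $m$ coprime to $|G|$ and let $m^{*}$ be an inverse of $m$ modulo $|G|$, again coprime to $|G|$. Writing $X^{(m)}=Y_1\sqcup\cdots\sqcup Y_k$ as a disjoint union of basic sets and applying the multiplier $m^{*}$ (a bijection of $G$), we get $X=X^{(mm^{*})}=Y_1^{(m^{*})}\sqcup\cdots\sqcup Y_k^{(m^{*})}$, a disjoint union of nonempty $\mathcal{A}$-sets; indecomposability of $X$ forces $k=1$, so $X^{(m)}=Y_1\in\mathcal{S}(\mathcal{A})$. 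The only genuinely nontrivial ingredient is the Frobenius congruence, which is exactly where the hypothesis that $G$ is abelian enters (commutativity of $\mathbb{F}_pG$); everything else is bookkeeping with the basis of $\mathcal{A}$.
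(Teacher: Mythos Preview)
Your argument is correct and is precisely the classical proof of Schur's multiplier theorem: reduce to a prime $p\nmid|G|$, use the Frobenius endomorphism of the commutative ring $\mathbb{F}_pG$ to get $\underline{X}^{\,p}\equiv\underline{X^{(p)}}\pmod p$, deduce that $X^{(p)}$ is an $\mathcal{A}$-set by comparing coefficients in the basis $\{\underline{Y}:Y\in\mathcal{S}(\mathcal{A})\}$, iterate over the prime factors of a positive representative of $m$, and finally use the inverse multiplier $m^{*}$ together with indecomposability of basic sets to upgrade ``$\mathcal{A}$-set'' to ``basic set''. Each step is sound as written.

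As for comparison with the paper: the paper does not give a proof of this lemma at all. It merely cites \cite[Theorem~23.9]{Wi} (Wielandt, \emph{Finite permutation groups}), where exactly the argument you have written appears. So your proof is not a different route; it is the standard one the paper is pointing to.
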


\begin{lemm} \label{sch}
Let $\mathcal{A}$ be an $S$-ring over an abelian group $G$, $p$  a prime divisor of $|G|$, and  $H=\{g\in G:g^p=e\}$. Then for every  $\mathcal{A}$-set $X$ the set $X^{[p]}=\{x^p:x\in~X,~|X\cap Hx|\not\equiv 0\mod p\}$ is an $\mathcal{A}$-set.
\end{lemm}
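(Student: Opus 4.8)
The plan is to work modulo $p$ and to exploit the Frobenius endomorphism of the group algebra $\mathbb{F}_pG$. Concretely, I would show that the image of $(\underline X)^p$ under the coefficientwise reduction map $\varphi\colon\mathbb{Z}G\to\mathbb{F}_pG$ is supported exactly on $X^{[p]}$, and then pull this back to a statement about $\mathcal{A}$ using that distinct basic sets are disjoint.

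For the first part, note that since $G$ is abelian, $\mathbb{F}_pG$ is a commutative $\mathbb{F}_p$-algebra, so the map $\alpha\mapsto\alpha^p$ on $\mathbb{F}_pG$ is additive (the binomial coefficients $\binom{p}{k}$ with $0<k<p$ vanish mod $p$) as well as multiplicative. Applying this to $\varphi(\underline X)=\sum_{x\in X}\varphi(x)$ and using $\varphi(x)^p=\varphi(x^p)$, one gets $\varphi((\underline X)^p)=\sum_{x\in X}\varphi(x^p)$ in $\mathbb{F}_pG$. Now group the summands on the right according to the value of $x^p$: for $x_0\in X$ and $x\in G$ we have $x^p=x_0^p$ if and only if $(xx_0^{-1})^p=e$, i.e.\ $x\in Hx_0$ (here abelianness of $G$ is used), while both $x^p$ and $|X\cap Hx|$ depend only on the coset $Hx$. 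Hence
$$\varphi\bigl((\underline X)^p\bigr)=\sum_{y\in X^{(p)}}\bigl(|X\cap Hx_0(y)|\bmod p\bigr)\,y,$$
where $x_0(y)$ is any element of $X$ with $x_0(y)^p=y$, and so the support of $\varphi((\underline X)^p)$ in $\mathbb{F}_pG$ is precisely $X^{[p]}$.

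For the second part, since $\mathcal{A}$ is a ring and $\underline X\in\mathcal{A}$, we have $(\underline X)^p\in\mathcal{A}$; write $(\underline X)^p=\sum_{Z\in\mathcal{S}(\mathcal{A})}c_Z\underline Z$ with $c_Z\in\mathbb{Z}$. Reducing mod $p$ gives $\varphi((\underline X)^p)=\sum_{Z}(c_Z\bmod p)\underline Z$, whose support is the union of those basic sets $Z$ with $c_Z\not\equiv 0\pmod p$, because the basic sets are pairwise disjoint. Comparing with the first part, $X^{[p]}$ is a union of basic sets of $\mathcal{A}$, and therefore $\underline{X^{[p]}}=\sum_{Z:\,c_Z\not\equiv 0\,(p)}\underline Z\in\mathcal{A}$; that is, $X^{[p]}$ is an $\mathcal{A}$-set. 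The argument is short; the only points that require care are the legitimacy of the Frobenius computation (which hinges on $\mathbb{F}_pG$ being commutative, hence on $G$ being abelian) and the bookkeeping showing that the $\mathbb{F}_p$-support of $(\underline X)^p$ is exactly $X^{[p]}$, not merely contained in $X^{(p)}$.
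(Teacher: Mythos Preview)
Your argument is correct. The paper does not supply its own proof of this lemma but cites it as one of the Schur theorems on multipliers from Wielandt's book; the Frobenius endomorphism computation in $\mathbb{F}_pG$ that you carry out is precisely the classical argument behind that result, so your proposal matches the intended (referenced) proof.
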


\section{Isomorphisms and schurity}

Let  $\mathcal{A}$  and $\mathcal{A}^{\prime}$ be $S$-rings over groups $G$  and $G^{\prime}$ respectively. The identities of $\mathcal{A}$ and $\mathcal{A}^{\prime}$ are denoted by $e$ and $e^{\prime}$ respectively. A bijection $f:G\rightarrow G^{\prime}$ is called an \emph{isomorphism} from $\mathcal{A}$ to $\mathcal{A}^{\prime}$ if
$$\{R(X)^f: X\in \mathcal{S}(\mathcal{A})\}=\{R(X^{\prime}): X^{\prime}\in \mathcal{S}(\mathcal{A}^{\prime})\},$$
where $R(X)^f=\{(g^f,~h^f):~(g,~h)\in R(X)\}$. If there exists an isomorphism from $\mathcal{A}$ to $\mathcal{A}^{\prime}$ then we say that $\mathcal{A}$ and $\mathcal{A}^{\prime}$ are \emph{isomorphic} and write $\mathcal{A}\cong \mathcal{A}^{\prime}$. If $f$ is an isomorphism from $\mathcal{A}$ to $\mathcal{A}^{\prime}$ and $e^f=e^{\prime}$ then $\mathcal{S}(\mathcal{A})^f=\mathcal{S}(\mathcal{A}^{\prime})$ and $c_{X,Y}^Z=c_{X^f,Y^f}^{Z^f}$ for all $X,Y,Z\in \mathcal{S}(\mathcal{A})$.

The group  of all isomorphisms from $\mathcal{A}$ onto itself has a normal subgroup
$$\{f\in \iso(\mathcal{A}): R(X)^f=R(X)~\text{for every}~X\in \mathcal{S}(\mathcal{A})\}$$
called the \emph{automorphism group} of $\mathcal{A}$ and denoted by $\aut(\mathcal{A})$. From the definition it follows that $G_{\r}\leq \aut(\mathcal{A})$. One can check that $X^f=X$ for every $X\in \mathcal{S}(\mathcal{A})$ and every $f\in \aut(\mathcal{A})_e$. If $S$ is an $\mathcal{A}$-section then $\aut(\mathcal{A})^S\leq \aut(\mathcal{A}_S)$. Denote the group $\aut(\mathcal{A})\cap \aut(G)$ by $\aut_G(\mathcal{A})$. If $S$ is an $\mathcal{A}$-section then $\aut_G(\mathcal{A})^S\leq \aut_S(\mathcal{A}_S)$. 

Let $K\in \Sup(G_{\r})$. Schur proved in~\cite{Schur} that the $\mathbb{Z}$-submodule
$$V(K,G)=\Span_{\mathbb{Z}}\{\underline{X}:~X\in \orb(K_e,G)\},$$
is an $S$-ring over $G$. An $S$-ring $\mathcal{A}$ over  $G$ is called \emph{schurian} if $\mathcal{A}=V(K,G)$ for some $K\in \Sup(G_{\r})$. If $\mathcal{A}=V(K,G)$ for some $K\in \Sup(G_{\r})$ and $S$ is an $\mathcal{A}$-section then $\mathcal{A}_S=V(K^S,G)$. So if $\mathcal{A}$ is schurian then $\mathcal{A}_S$ is also schurian for every $\mathcal{A}$-section $S$. One can verify that if $\mathcal{A}=V(K,G)$ for some $K\in \Sup(G_{\r})$ then $K\leq \aut(\mathcal{A})$. Given $K\in \Sup(G_{\r})$ put $K^{(2)}=\aut(V(K,G))$. Put  also 
$$\Sup_2(G_\r)=\{K \in \Sup(G_\r):~ K^{(2)}=K\}.$$

An $S$-ring $\mathcal{A}$ over a group $G$ is called \emph{cyclotomic} if there exists $K\leq\aut(G)$ such that $\mathcal{S}(\mathcal{A})=\orb(K,G)$. In this case we write $\mathcal{A}=\cyc(K,G)$. Note that if $\mathcal{A}=\cyc(K,G)$ then $\mathcal{A}=V(G_{\r}K,G)$. So every cyclotomic $S$-ring is schurian. If $\mathcal{A}=\cyc(K,G)$ for some $K\leq \aut(G)$ and $S$ is an $\mathcal{A}$-section then $\mathcal{A}_S=\cyc(K^S,G)$. Therefore if $\mathcal{A}$ is cyclotomic then $\mathcal{A}_S$ is also cyclotomic for every $\mathcal{A}$-section $S$. 

Two groups $K_1,K_2\leq \aut(G)$ are defined to be \emph{Cayley equivalent} if $\orb(K_1,G)=\orb(K_2,G)$. In this case we write $K_1\cong_{\cay} K_2$. If $\mathcal{A}=\cyc(K,G)$ for some $K\leq \aut(G)$ then $\aut_G(\mathcal{A})$ is the largest group which is Cayley equivalent to $K$. A cyclotomic $S$-ring $\mathcal{A}$ over $G$ is called \emph{Cayley minimal} if
$$\{K\leq \aut(G):~K\approx_{\cay} \aut_G(\mathcal{A})\}=\{\aut_G(\mathcal{A})\}.$$
It is easy to see that $\mathbb{Z}G$ is Cayley minimal.

\section{$\CI$-$S$-rings}

Let $G$ be a finite group and $\mathcal{A}$ an $S$-ring over $G$.  Put 
$$\iso(\mathcal{A})=\{f\in \sym(G):~\text{f is an isomorphism from}~\mathcal{A}~\text{onto}~\text{$S$-ring over}~G\}.$$
One can see that $\aut(\mathcal{A})\aut(G)\subseteq \iso(\mathcal{A})$. The $S$-ring $\mathcal{A}$ is defined to be a \emph{$\CI$-$S$-ring} if 
$$\aut(\mathcal{A})\aut(G)=\iso(\mathcal{A}).$$ 
This definition was suggested in~\cite{HM}. From~\cite[Theorem~2.6]{HM} it follows that $\mathcal{A}$ is $\CI$ if and only if  
$$\aut(\mathcal{A})_e\aut(G)=\iso(\mathcal{A})_e.~\eqno(1)$$
 It is easy to check that $\mathbb{Z}G$ and the $S$-ring of rank~$2$ over $G$ are $\CI$-$S$-rings.  

Let $K_1, K_2\in \Sup(G_\r)$ with $K_1 \leq K_2$. Then $K_1$ is called a \emph{$G$-complete  
subgroup} of $K_2$ if every regular subgroup of $K_2$ isomorphic to $G$ is conjugate in $K_2$ to some subgroup of $K_1$ (see \cite[Definition~2]{HM}). In this case we write $K_1 \preceq_G K_2$. The relation $\preceq_G$ is a partial order on $\Sup(G_\r)$. The set of the minimal elements of $\Sup_2(G_\r)$ with respect to $\preceq_G$ is denoted by $\Sup_2^{\min}(G_\r)$.

\begin{lemm}\cite[Lemma~3.3]{KR2}\label{cimin}
Let $G$ be a finite group. If $V(K, G)$ is a $\CI$-S-ring for every $K \in \Sup_2^{\rm min}(G_\r)$ then $G$ is a $\DCI$-group.
\end{lemm}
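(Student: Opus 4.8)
The plan is to verify the $\CI$-condition for an arbitrary subset $S\subseteq G$ directly from the hypothesis, exploiting the connection between isomorphisms of Cayley digraphs and $S$-rings. Let $S,T\subseteq G$ be such that $\cay(G,S)\cong\cay(G,T)$ via some bijection $f$. The first step is to pass from the digraph to an $S$-ring: set $K=\aut(\cay(G,S))$ and consider $\mathcal{A}=V(K,G)$, the schurian $S$-ring it determines. By construction $S$ is an $\mathcal{A}$-set, so it suffices to show $\mathcal{A}$ is a $\CI$-$S$-ring — then an isomorphism of Cayley digraphs carrying $R(S)$ to $R(T)$ is realized, up to a right translation and an element of $\aut(\mathcal{A})$, by a group automorphism, which is exactly what $\CI$ for $S$ requires. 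So the problem reduces to: every schurian $S$-ring arising as $V(\aut(\cay(G,S)),G)$ is $\CI$.

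Next I would reduce from this particular $\mathcal{A}$ to the $S$-rings indexed by $\Sup_2^{\min}(G_\r)$. Note $K=\aut(\cay(G,S))$ satisfies $K^{(2)}=K$ (the automorphism group of a digraph is $2$-closed), so $K\in\Sup_2(G_\r)$ and $\mathcal{A}=V(K,G)$ with $K=\aut(\mathcal{A})$. By definition of $\Sup_2^{\min}(G_\r)$, pick $K_0\in\Sup_2^{\min}(G_\r)$ with $K_0\preceq_G K$, i.e. $K_0\le K$ and every regular subgroup of $K$ isomorphic to $G$ is conjugate in $K$ into $K_0$; put $\mathcal{A}_0=V(K_0,G)$. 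The key point is that $G$-completeness of $K_0$ in $K$ transfers the $\CI$-property: given an isomorphism $f$ from $\mathcal{A}$ onto an $S$-ring over $G$, one has $f^{-1}G_\r f\le\aut(\mathcal{A})=K$ is a second regular copy of $G$ in $K$, hence conjugate in $K$ to a subgroup of $K_0$; composing $f$ with that conjugating element (which lies in $K=\aut(\mathcal{A})\subseteq\iso(\mathcal{A})$) we may assume $f^{-1}G_\r f\le K_0$, so $f$ restricts to an isomorphism of $\mathcal{A}_0$; since $\mathcal{A}_0$ is $\CI$ by hypothesis, $f$ is the product of an element of $\aut(\mathcal{A}_0)\le\aut(\mathcal{A})=K$ and an element of $\aut(G)$, whence $f\in\aut(\mathcal{A})\aut(G)$. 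This is the standard ``climbing'' argument for $G$-complete subgroups; I expect it is essentially the proof of \cite[Lemma~3.3]{KR2}, so I would either cite it or reproduce this short chain.

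Assembling the pieces: for $S\subseteq G$, taking $\mathcal{A}=V(\aut(\cay(G,S)),G)$, the reduction above shows $\iso(\mathcal{A})=\aut(\mathcal{A})\aut(G)$, hence $\mathcal{A}$ is $\CI$; since $S$ is an $\mathcal{A}$-set, $S$ is a $\CI$-subset. As $S$ was arbitrary, $G$ is a $\DCI$-group. The main obstacle is the middle step — making precise that $G$-completeness of $K_0$ in $K$, together with $\CI$ for $\mathcal{A}_0=V(K_0,G)$, forces $\CI$ for $\mathcal{A}=V(K,G)$; one must check carefully that the conjugating element used to move $f^{-1}G_\r f$ into $K_0$ genuinely lies in $\iso(\mathcal{A})$ (it does, being in $K=\aut(\mathcal{A})$) and that after this adjustment $f$ really is an isomorphism of the smaller $S$-ring $\mathcal{A}_0$ (which follows because $\mathcal{A}_0\subseteq\mathcal{A}$ is the $S$-ring of the orbits of $(K_0)_e$ and $f$ respects the $K_0$-structure once $f^{-1}G_\r f\le K_0$). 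Everything else is bookkeeping with the definitions of $V(\cdot,\cdot)$, $2$-closure, and $\CI$ via the stabilizer form~(1).
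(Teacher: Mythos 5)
Your argument is correct and is essentially the standard reduction (Babai-type conjugacy of regular subgroups plus $2$-closure) that the paper does not reprove but imports wholesale by citing~\cite[Lemma~3.3]{KR2}. One small slip worth fixing: since $K_0\le K$, the orbits of $(K_0)_e$ refine those of $K_e$, so the containment of $S$-rings is $\mathcal{A}\subseteq\mathcal{A}_0$ rather than $\mathcal{A}_0\subseteq\mathcal{A}$; this does not affect the argument, as what you actually use is $\aut(\mathcal{A}_0)=K_0^{(2)}=K_0\le K=\aut(\mathcal{A})$.
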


\section{Tensor and generalized wreath products}

Let  $\mathcal{A}_1$ and $\mathcal{A}_2$ be $S$-rings over groups $G_1$ and $G_2$ respectively. Then the set
$$\mathcal{S}=\mathcal{S}(\mathcal{A}_1)\times \mathcal{S}(\mathcal{A}_2)=\{X_1\times X_2:~X_1\in \mathcal{S}(\mathcal{A}_1),~X_2\in \mathcal{S}(\mathcal{A}_2)\}$$
forms a partition of  $G=G_1\times G_2$ that defines an  $S$-ring over $G$. This $S$-ring is called the  \emph{tensor product}  of $\mathcal{A}_1$ and $\mathcal{A}_2$ and denoted by $\mathcal{A}_1 \otimes \mathcal{A}_2$.

\begin{lemm}\cite[Lemma 2.3]{EKP}\label{tenspr}
Let $\mathcal{A}$ be an $S$-ring over an abelian group $G=G_1\times G_2$. Suppose that $G_1$ and $G_2$ are $\mathcal{A}$-subgroups. Then 

$(1)$ $X_{G_i}\in \mathcal{S}(\mathcal{A})$ for all $X\in \mathcal{S}(\mathcal{A})$ and $i=1,2;$

$(2)$ $\mathcal{A} \geq \mathcal{A}_{G_1}\otimes \mathcal{A}_{G_2}$, and the equality is attained whenever $\mathcal{A}_{G_i}=\mathbb{Z}G_i$ for some $i\in \{1,2\}$.
\end{lemm}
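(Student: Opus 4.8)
The plan is to reduce both parts to the elementary remark that a subset of $G$ is an $\mathcal{A}$-set exactly when it is a union of basic sets of $\mathcal{A}$; hence the family of $\mathcal{A}$-sets is closed under union and intersection, and (as recalled in Section~2) under the product $XY$. I shall also use repeatedly that, since $G_i$ is an $\mathcal{A}$-subgroup, the basic sets of $\mathcal{A}_{G_i}$ are precisely the basic sets of $\mathcal{A}$ contained in $G_i$ — immediate from the definition of $\mathcal{A}_S$ applied to $S=G_i/\{e\}$.

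For part~(1) I fix $X\in\mathcal{S}(\mathcal{A})$ and prove $X_{G_1}\in\mathcal{S}(\mathcal{A})$; the statement for $G_2$ is symmetric. Because $G$ is abelian with $G=G_1\times G_2$, a direct computation gives $XG_2=X_{G_1}\times G_2$. Since $G_2$ is an $\mathcal{A}$-subgroup and $X$ an $\mathcal{A}$-set, $XG_2$ is an $\mathcal{A}$-set, and therefore so is $X_{G_1}=(XG_2)\cap G_1$. Write $X_{G_1}=Y_1\cup\dots\cup Y_k$ as a disjoint union of basic sets, $k\ge 1$; I must show $k=1$. For each $i$ set $X^{(i)}=X\cap(Y_i\times G_2)$. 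As $Y_i\times G_2=Y_iG_2$ is an $\mathcal{A}$-set, each $X^{(i)}$ is an $\mathcal{A}$-set contained in the basic set $X$, so $X^{(i)}\in\{\varnothing,X\}$. But the $X^{(i)}$ are pairwise disjoint (since the $Y_i$ are), and each is nonempty: any $a\in Y_i\subseteq X_{G_1}$ has some $b\in G_2$ with $(a,b)\in X$, and then $(a,b)\in X^{(i)}$. Two disjoint nonempty sets cannot both equal $X$, so $k=1$ and $X_{G_1}=Y_1\in\mathcal{S}(\mathcal{A})$.

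For part~(2), let $X_1\in\mathcal{S}(\mathcal{A}_{G_1})$ and $X_2\in\mathcal{S}(\mathcal{A}_{G_2})$. Then $X_1,X_2$ are $\mathcal{A}$-sets, hence so is $X_1X_2=X_1\times X_2$; as $\mathcal{A}_{G_1}\otimes\mathcal{A}_{G_2}$ is spanned by the elements $\underline{X_1\times X_2}$, we obtain $\mathcal{A}\ge\mathcal{A}_{G_1}\otimes\mathcal{A}_{G_2}$. Now suppose $\mathcal{A}_{G_1}=\mathbb{Z}G_1$, so $\mathcal{S}(\mathcal{A}_{G_1})=\{\{a\}:a\in G_1\}$ (the case $i=2$ is symmetric). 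It is enough to check that every $X\in\mathcal{S}(\mathcal{A})$ is a basic set of $\mathcal{A}_{G_1}\otimes\mathcal{A}_{G_2}$: then $\mathcal{S}(\mathcal{A})$ is a subfamily of the partition $\mathcal{S}(\mathcal{A}_{G_1}\otimes\mathcal{A}_{G_2})$, which already covers $G$, so the two partitions, and hence the two $S$-rings, coincide. By part~(1), $X_{G_1}\in\mathcal{S}(\mathcal{A})$ and $X_{G_1}\subseteq G_1$, so $X_{G_1}\in\mathcal{S}(\mathcal{A}_{G_1})$, i.e. $X_{G_1}=\{a\}$ for some $a\in G_1$; thus $X=\{a\}\times X_{G_2}$. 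By part~(1) again, $X_{G_2}\in\mathcal{S}(\mathcal{A}_{G_2})$, so $X=\{a\}\times X_{G_2}$ is a basic set of $\mathbb{Z}G_1\otimes\mathcal{A}_{G_2}$, as needed.

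Essentially everything is forced by the definitions; the one place a small idea is needed is in part~(1), namely slicing the basic set $X$ by the sets $Y_i\times G_2$ and using that an $\mathcal{A}$-set contained in a basic set must be empty or the whole basic set. That, together with the trivial fact that every such slice meets $X$, is exactly what promotes the assertion that $X_{G_1}$ is an $\mathcal{A}$-set to the stronger assertion that $X_{G_1}$ is a single basic set. I do not expect any real obstacle beyond arranging this bookkeeping carefully.
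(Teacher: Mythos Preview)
Your proof is correct. The paper does not supply its own argument for this lemma; it simply quotes the result from \cite[Lemma~2.3]{EKP}, so there is nothing to compare at the level of strategy. Your argument is the natural one: in part~(1) you show $X_{G_1}=(XG_2)\cap G_1$ is an $\mathcal{A}$-set and then use the slicing by $Y_iG_2$ to force it to be a single basic set, and in part~(2) you use part~(1) to see that when $\mathcal{A}_{G_1}=\mathbb{Z}G_1$ each basic set of $\mathcal{A}$ already lies in $\mathcal{S}(\mathbb{Z}G_1\otimes\mathcal{A}_{G_2})$.

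One cosmetic remark: your symbol $X^{(i)}$ for the slice $X\cap(Y_i\times G_2)$ collides with the paper's global notation $X^{(m)}=\{x^m:x\in X\}$; renaming it (say $X_i$ or $Z_i$) would avoid ambiguity when the proof is inserted into the paper.
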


\begin{lemm}\label{citens}
Let $G_1,G_2\in \mathcal{E}_c$. Suppose that $\mathcal{A}_1$ and $\mathcal{A}_2$ are $\CI$-$S$-rings over $G_1$ and $G_2$ respectively. Then $\mathcal{A}=\mathcal{A}_1 \otimes \mathcal{A}_2$ is a $\CI$-$S$-ring over $G=G_1\times G_2$.
\end{lemm}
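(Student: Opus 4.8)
The plan is to verify the criterion~(1) from Section~4: it suffices to show that $\iso(\cA)_e \subseteq \aut(\cA)_e\aut(G)$. So let $f \in \iso(\cA)_e$, i.e.\ $f$ is an isomorphism from $\cA$ onto some $S$-ring $\cA'$ over $G$ with $e^f = e$. Since $G = G_1 \times G_2$ and both $G_1$, $G_2$ are $\cA$-subgroups (being unions of basic sets of the form $X_1 \times \{e\}$ and $\{e\} \times X_2$), one first checks that $f$ maps cosets of $G_1$ to cosets of $G_1$ and cosets of $G_2$ to cosets of $G_2$; this is a standard consequence of the fact that an isomorphism of $S$-rings sending $e$ to $e$ carries $\cA$-subgroups to $\cA'$-subgroups together with the description of the arc sets $R(X)$. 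Consequently $f$ respects the direct product decomposition and one can extract from $f$ induced bijections $f_1 : G_1 \to G_1$ and $f_2 : G_2 \to G_2$ (for instance, via the restriction of $f$ to $G_i$, after noting $G_i^f = G_i$). The key structural point is that because $\cA = \cA_1 \otimes \cA_2$, its basic sets are precisely the products $X_1 \times X_2$ with $X_i \in \cS(\cA_i)$, and the arc set $R(X_1 \times X_2)$ decomposes coordinatewise; hence $f_i$ is an isomorphism from $\cA_i$ onto an $S$-ring over $G_i$ with $e^{f_i} = e$, that is, $f_i \in \iso(\cA_i)_e$.

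Now apply the hypothesis that $\cA_i$ is a $\CI$-$S$-ring: by~(1) applied to each factor, $f_i = h_i \varphi_i$ for some $h_i \in \aut(\cA_i)_e$ and $\varphi_i \in \aut(G_i)$. Setting $h = h_1 \times h_2$ and $\varphi = \varphi_1 \times \varphi_2$, we have $\varphi \in \aut(G)$ and $h \in \aut(\cA_1) \times \aut(\cA_2) \subseteq \aut(\cA_1 \otimes \cA_2) = \aut(\cA)$ with $e^h = e$, so $h\varphi \in \aut(\cA)_e\aut(G)$. It remains to show $f = h\varphi$, i.e.\ that $f$ is itself the coordinatewise product $f_1 \times f_2$. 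This is where one must be careful: a priori $f$ need not split as a direct product of maps on $G_1$ and $G_2$, only be compatible with the coset structure. The standard remedy is to compose: replace $f$ by $g = f \circ (h\varphi)^{-1}$, which again lies in $\iso(\cA)_e$ and now induces the identity on both $G_1$ and $G_2$; one then argues that an element of $\iso(\cA)_e$ fixing $G_1$ and $G_2$ pointwise and respecting both coset partitions must be the identity on all of $G$, using that every element of $G$ is determined by its $G_1$-coset and its $G_2$-coset and that $f$ maps the unique basic set containing $g$ compatibly.

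The main obstacle — and the step deserving the most care — is precisely this last reduction: showing that $f$ decomposes as $f_1 \times f_2$, equivalently that $\iso(\cA_1\otimes\cA_2)_e$ fixing the two subgroups pointwise is trivial. One should argue it along the following lines: for $g = (g_1, g_2) \in G$, the point $g^f$ lies in $(G_1 g_1)^f \cap (g_2 G_2)^f$; since $(G_1 g_1)^f = G_1 g_1^{f_1} = G_1 g_1$ and $(g_2 G_2)^f = g_2^{f_2} G_2 = g_2 G_2$ when $f_i = \id$, and since $G_1 g_1 \cap g_2 G_2 = \{(g_1,g_2)\}$ in an abelian direct product, we get $g^f = g$. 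Writing the coset-compatibility claim as $(G_1 g)^f = G_1 g^f$ requires the observation that $G_1$ is an $\cA$-subgroup and isomorphisms of $S$-rings fixing $e$ send $\cA$-cosets of an $\cA$-subgroup to $\cA'$-cosets of the corresponding $\cA'$-subgroup, with $G_1^f = G_1$ forced since $f_1$ is a bijection of $G_1$. Everything else is bookkeeping; no properties of $\mathcal{E}_c$ beyond its closure under direct factors are actually needed here, and the hypothesis $G_i \in \mathcal{E}_c$ enters only to make the statement fit the intended application.
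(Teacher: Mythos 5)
Your argument follows the same skeleton as the paper's proof: pass to Eq.~(1), observe that the image $\mathcal{A}'$ is again a tensor product with respect to a decomposition $G=G_1'\times G_2'$, extract induced isomorphisms $f_1,f_2$, apply the $\CI$-property of the factors, and assemble $\varphi=\varphi_1\times\varphi_2$. Your finishing move is fine and slightly different from the paper's: once $G_i^f=G_i$ is known, preservation of the two coset partitions forces $(a,b)^f\in G_1b^{f_2}\cap a^{f_1}G_2=\{(a^{f_1},b^{f_2})\}$, so $f=f_1\times f_2$ pointwise, whereas the paper only verifies the set-level identity $X^f=X^{\varphi}$ for mixed basic sets via the structure constant $c^X_{X_1,X_2}=1$. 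Either finish is acceptable.

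The genuine gap is the step $G_i^f=G_i$. You justify it as ``forced since $f_1$ is a bijection of $G_1$,'' but $f_1$ is defined as the restriction of $f$ to $G_1$ only after one knows $G_1^f=G_1$, so this is circular. A priori one only gets that $G_1^f=G_1'$ and $G_2^f=G_2'$ are $\mathcal{A}'$-subgroups with $G=G_1'\times G_2'$ and $|G_i'|=|G_i|$. Unless $G_i'\cong G_i$, the induced map $f_i$ is an isomorphism onto an $S$-ring over a group \emph{not} isomorphic to $G_i$, and the $\CI$-property of $\mathcal{A}_i$ (a statement about isomorphisms onto $S$-rings over $G_i$) cannot be invoked; this is exactly where the hypothesis on $\mathcal{E}_c$ does real work, so your closing claim that no properties of $\mathcal{E}_c$ are needed is wrong. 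Concretely, take $G_1=C_4$ and $G_2=C_2^2$ with $\mathcal{A}_1,\mathcal{A}_2$ the rank-$2$ $S$-rings (both $\CI$): viewing $G$ as a $4\times 4$ grid whose rows are the $G_1$-cosets and whose columns are the $G_2$-cosets, the transpose of the grid fixing $e$ lies in $\iso(\mathcal{A}_1\otimes\mathcal{A}_2)_e$ and sends $G_1$ to $G_2$, while every element of $\aut(\mathcal{A})_e\aut(G)$ sends $G_1$ to a subgroup isomorphic to $C_4$. So some hypothesis controlling the isomorphism type of equal-order direct factors is indispensable; what is actually used (and what holds in every application in the paper) is that $G$ itself lies in $\mathcal{E}_c$, where subgroups of equal order are isomorphic, after which one concludes $G_i'\cong G_i$ and reduces to $G_i'=G_i$ by composing $f$ with an automorphism of $G$ carrying $G_i'$ to $G_i$. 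Your proof must make this step explicit.
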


\begin{proof}
To prove the lemma it sufficient to check that Eq.~(1) holds for $\mathcal{A}$, i.e. to check that for each isomorphism $f$ from $\mathcal{A}$ to an $S$-ring $\mathcal{A}^{\prime}$ over $G$ with $e^f=e$ there exists $\varphi\in \aut(G)$ satisfying the following:
$$X^f=X^{\varphi}$$
for every $X\in \mathcal{S}(\mathcal{A})$. Since $f$ is an isomorphism, 
$$\mathcal{A}^{\prime}=\mathcal{A}_1^{\prime} \otimes \mathcal{A}_2^{\prime},$$ 
where $\mathcal{A}_1^{\prime}$ and $\mathcal{A}_2^{\prime}$ are $S$-rings over groups $G_1^{\prime}$ and $G_2^{\prime}$ respectively with $G=G_1^{\prime}\times G_2^{\prime}$. The isomorphism $f$ induces the isomorphisms $f_1$ and $f_2$ from $\mathcal{A}_1$ to $\mathcal{A}_1^{\prime}$ and from $\mathcal{A}_2$ to $\mathcal{A}_2^{\prime}$ respectively. Note that $|G_1^{\prime}|=|G_1^{f_1}|=|G_1|$ and $|G_2^{\prime}|=|G_2^{f_2}|=|G_2|$. We conclude that $G_1^{\prime}\cong G_1$ and $G_2^{\prime}\cong G_2$ because $G_1,G_2,G_1^{\prime},G_2^{\prime}\in \mathcal{E}_c$. Without loss of generality, we may assume that $G_1^{\prime}=G_1$ and $G_2^{\prime}=G_2$.

Since $\mathcal{A}_1$ and $\mathcal{A}_2$ are $\CI$-$S$-rings, Eq.~(1) implies that there exist $\varphi_1\in \aut(G_1)$ and $\varphi_2\in \aut(G_2)$ with
$$X_1^{\varphi_1}=X_1^{f_1}~\text{and}~X_2^{\varphi_2}=X_2^{f_2}~\eqno(2)$$
for every $X_1\in \mathcal{S}(\mathcal{A}_1)$ and every $X_2\in \mathcal{S}(\mathcal{A}_2)$. Let $\varphi \in \aut(G_1)\times \aut(G_2)\leq \aut(G)$ such that $\varphi^{G_1}=\varphi_1$ and $\varphi^{G_2}=\varphi_2$. Prove that $X^f=X^{\varphi}$ for $\varphi$ and every $X\in \mathcal{S}(\mathcal{A})$. If $X\in \mathcal{S}(\mathcal{A}_1) \cup \mathcal{S}(\mathcal{A}_2)$ then $X^f=X^{\varphi}$ by Eq.~(2). If $X$ lies outside $\mathcal{S}(\mathcal{A}_1) \cup \mathcal{S}(\mathcal{A}_2)$ then $X=X_1X_2$ for some $X_1\in \mathcal{S}(\mathcal{A}_1)$ and $X_2\in \mathcal{S}(\mathcal{A}_2)$. The straightforward computation shows that
$$X^{\varphi}=(X_1X_2)^{\varphi}=X_1^{\varphi}X_2^{\varphi}=X_1^{\varphi_1}X_2^{\varphi_2}=X_1^{f_1}X_2^{f_2}=X_1^fX_2^f=X^f.$$
Here the fourth equality holds by Eq.~(2) and the sixth equality holds due to the following observation: $X_1^fX_2^f\in \mathcal{S}(\mathcal{A}_1^{\prime})\otimes \mathcal{S}(\mathcal{A}_2^{\prime})=\mathcal{S}(\mathcal{A}^{\prime})$ and $X^f$ is the unique basic set of $\mathcal{A}^{\prime}$ such that $c_{X_1^f,X_2^f}^{X^f}=c_{X_1,X_2}^X=1$. The lemma is proved.
\end{proof}

Let $\mathcal{A}$ be an $S$-ring over a group $G$ and $S=U/L$ an $\mathcal{A}$-section of $G$. An $S$-ring~$\mathcal{A}$ is called the \emph{$S$-wreath product} or the \emph{generalized wreath product} of $\mathcal{A}_U$ and $\mathcal{A}_{G/L}$ if $L\trianglelefteq G$ and $L\leq\rad(X)$ for each basic set $X$ outside~$U$. In this case we write $\mathcal{A}=\mathcal{A}_U\wr_{S}\mathcal{A}_{G/L}$ and omit $S$ when $U=L$. The construction of the generalized wreath product of $S$-rings was introduced  in~\cite{EP1}. 

A sufficient condition of $\CI$-property for the generalized wreath product over a group from $\mathcal{E}$ was proved in~\cite{KR1}. However, an abelian $\DCI$-group can belong to $\mathcal{E}_c\setminus \mathcal{E}$. On the other hand, each abelian $\DCI$-group lies in $\mathcal{E}_c$ (see~\cite[Theorem~1.1]{KM}). The following statement extends the sufficient condition from~\cite{KR1} to all groups from $\mathcal{E}_c$.

\begin{lemm}\label{cigwr}
Let $G\in \mathcal{E}_c$, $\mathcal{A}$ an $S$-ring over $G$, and $S=U/L$ an $\mathcal{A}$-section of $G$. Suppose that $\mathcal{A}$ is the nontrivial $S$-wreath product and the $S$-rings $\mathcal{A}_U$ and $\mathcal{A}_{G/L}$ are $\CI$-$S$-rings. Then $\mathcal{A}$ is a $\CI$-$S$-ring whenever 
$$\aut_{S}(\mathcal{A}_{S})=\aut_U(\mathcal{A}_U)^{S}\aut_{G/L}(\mathcal{A}_{G/L})^{S}.$$ 
In particular,  $\mathcal{A}$ is a $\CI$-$S$-ring if $\aut_{S}(\mathcal{A}_{S})=\aut_U(\mathcal{A}_U)^{S}$ or $\aut_{S}(\mathcal{A}_{S})=\aut_{G/L}(\mathcal{A}_{G/L})^{S}$.
\end{lemm}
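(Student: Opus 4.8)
The plan is to verify Eq.~(1) for $\mathcal{A}$, so let $f$ be an isomorphism from $\mathcal{A}$ to an $S$-ring $\mathcal{A}'$ over $G$ with $e^f=e$; we must produce $\varphi\in\aut(G)$ with $X^f=X^\varphi$ for every $X\in\mathcal{S}(\mathcal{A})$. Since $\mathcal{A}=\mathcal{A}_U\wr_S\mathcal{A}_{G/L}$ is a generalized wreath product, $f$ must carry this structure over: $U^f=U'$ and $L^f=L'$ are $\mathcal{A}'$-subgroups with $L'\trianglelefteq G$, $S'=U'/L'$, and $\mathcal{A}'=\mathcal{A}'_{U'}\wr_{S'}\mathcal{A}'_{G/L'}$, where $f$ induces an isomorphism $f_U$ from $\mathcal{A}_U$ to $\mathcal{A}'_{U'}$ and $f_{G/L}$ from $\mathcal{A}_{G/L}$ to $\mathcal{A}'_{G/L'}$ (and, compatibly, an isomorphism $f_S$ from $\mathcal{A}_S$ to $\mathcal{A}'_{S'}$). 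Because $G\in\mathcal{E}_c$, its subgroups and quotients are again in $\mathcal{E}_c$, and two groups in $\mathcal{E}_c$ of the same order are isomorphic; so we may identify $U'$ with $U$, $L'$ with $L$, $S'$ with $S$, and $G/L'$ with $G/L$.

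The next step uses the $\CI$ hypotheses on $\mathcal{A}_U$ and $\mathcal{A}_{G/L}$: by Eq.~(1) applied to each factor there exist $\psi_U\in\aut(U)$ and $\psi_{G/L}\in\aut(G/L)$ that realize $f_U$ and $f_{G/L}$ on basic sets, i.e.\ $X^{f_U}=X^{\psi_U}$ for all $X\in\mathcal{S}(\mathcal{A}_U)$ and similarly for $G/L$. These induce automorphisms $\psi_U^S$ and $\psi_{G/L}^S$ of $S$, and both lie in $\aut_S(\mathcal{A}_S)$ and induce $f_S$ on basic sets of $\mathcal{A}_S$; hence $\psi_U^S$ and $\psi_{G/L}^S$ differ by an element of $\aut_S(\mathcal{A}_S)_e$, in fact by an element of $\aut_S(\mathcal{A}_S)\cap\aut(S)=\aut_S(\mathcal{A}_S)$ that is trivial on all basic sets. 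The hypothesis $\aut_S(\mathcal{A}_S)=\aut_U(\mathcal{A}_U)^S\aut_{G/L}(\mathcal{A}_{G/L})^S$ is exactly what is needed to adjust $\psi_U$ and $\psi_{G/L}$ (by multiplying one of them by a suitable lift of a compatible element of $\aut_U(\mathcal{A}_U)^S$ or $\aut_{G/L}(\mathcal{A}_{G/L})^S$, without disturbing the action on basic sets of the respective factor) so that the adjusted $\psi_U$ and $\psi_{G/L}$ induce the \emph{same} automorphism of $S$. Then, since $G\in\mathcal{E}_c$ is abelian and $L\trianglelefteq G$, one can glue $\psi_U$ and $\psi_{G/L}$ along their common restriction to $S$ into a single $\varphi\in\aut(G)$ with $\varphi^U=\psi_U$ and $\varphi^{G/L}=\psi_{G/L}$ — this compatible-pair gluing is the standard construction for generalized wreath products and is where abelianness is genuinely used.

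Finally I would check $X^f=X^\varphi$ for every basic set $X$ of $\mathcal{A}$ by the usual dichotomy. If $X\subseteq U$ then $X$ maps under $\pi$ to a basic set of $\mathcal{A}_S$ and is a union of $L$-cosets' worth of data controlled by $\mathcal{A}_U$, so $X^f=X^{f_U}=X^{\psi_U}=X^\varphi$. If $X$ lies outside $U$, then $L\leq\rad(X)$, so $X$ is a full union of $L$-cosets and is determined by its image $\bar X$ in $G/L$, which is a basic set of $\mathcal{A}_{G/L}$; since $f$ respects the projection to $G/L$, we get $\bar X^{f}=\bar X^{f_{G/L}}=\bar X^{\psi_{G/L}}=\bar X^{\varphi}$, and lifting back (using $L^f=L$ and $L^\varphi=L$) gives $X^f=X^\varphi$. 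The ``in particular'' clause follows immediately, since if $\aut_S(\mathcal{A}_S)$ already equals one of the two factor images then the product formula holds trivially. I expect the main obstacle to be the bookkeeping in the middle step: making precise the sense in which $f$ induces well-defined isomorphisms on $\mathcal{A}_U$, $\mathcal{A}_{G/L}$ and $\mathcal{A}_S$ that are mutually compatible, and verifying that the adjustment of $\psi_U,\psi_{G/L}$ by elements from the two Cayley-equivalence groups can be performed simultaneously with the gluing — i.e.\ that an element of $\aut_U(\mathcal{A}_U)^S$ really lifts to something in $\aut_U(\mathcal{A}_U)\cap\aut(U)$ acting trivially on $\mathcal{S}(\mathcal{A}_U)$, which is where the $\mathcal{E}_c$ hypothesis (as opposed to the weaker $\mathcal{E}$ used in~\cite{KR1}) has to do extra work for the $C_4$ Sylow factors.
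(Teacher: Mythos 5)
Your overall route is the same as the paper's: reduce to Eq.~(1), use the generalized wreath structure to get induced isomorphisms $f_1$, $f_2$ on $\mathcal{A}_U$ and $\mathcal{A}_{G/L}$, invoke the $\CI$ hypotheses to obtain $\varphi_1\in\aut(U)$, $\varphi_2\in\aut(G/L)$, use the factorization $\aut_S(\mathcal{A}_S)=\aut_U(\mathcal{A}_U)^S\aut_{G/L}(\mathcal{A}_{G/L})^S$ to replace them by a compatible pair $\psi_1,\psi_2$ with $\psi_1^S=\psi_2^S$, glue to $\varphi\in\aut(G)$, and check basic sets by the inside-$U$/outside-$U$ dichotomy. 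The first half of this is carried out correctly (and the adjustment step is fine: elements of $\aut_U(\mathcal{A}_U)=\aut(\mathcal{A}_U)\cap\aut(U)$ fix every basic set of $\mathcal{A}_U$ automatically, so the ``lifting'' worry you raise at the end is a non-issue by definition of $\aut_U(\mathcal{A}_U)^S$).

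The genuine gap is the gluing step, which you dismiss as ``the standard construction for generalized wreath products'' following from abelianness. It does not: for an arbitrary abelian group a compatible pair need not extend. Take $G=C_{p^2}$ with $U=L=C_p$; then $\psi_1=\id_U$ is compatible with every $\psi_2\in\aut(G/L)$, but the only automorphisms of $G$ inducing the identity on $U$ are $x\mapsto x^k$ with $k\equiv 1\pmod p$, and these induce only the identity on $G/L$. So the extendability of $(\psi_1,\psi_2)$ is exactly where the hypothesis $G\in\mathcal{E}_c$ must be used, and it is the bulk of the paper's proof: one needs an auxiliary lemma (Lemma~\ref{orders}) showing that a coset $Ly=(Lx)^{\psi_2}$ contains a representative $y'$ with $|y'|=|x|$ when $|x|$ is an odd prime or $4$, then a decomposition $G=H\times U$ (or $G=C\times H\times V$ when $\syl_2(U)=C_1$) with $H$ a direct product of cyclic factors of prime or order-$4$, a choice of images $x_i'$ of the generators of $H$ with the correct orders, and a verification that $\langle x_1',\ldots,x_s'\rangle$ is again a complement to $U$ so that $\alpha$ with $\alpha^U=\psi_1$, $x_i^\alpha=x_i'$ is a well-defined automorphism. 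The case $\syl_2(U)=C_1=\langle c^2\rangle$ needs separate care (one must also send $c$ to a suitable $c'$ of order $4$ and note that $c^2$ is the unique involution). Without this construction your $\varphi$ does not exist in general, and the final verification that $\varphi$ induces $\psi_2$ on all of $G/L$ (needed for the outside-$U$ basic sets) also depends on how $\varphi$ is built on the complement $H$.
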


Before we start with the proof of Lemma~\ref{cigwr}, we prove an easy auxiliary lemma on elements of  groups from $\mathcal{E}_c$.

\begin{lemm}\label{orders}
Let $G\in \mathcal{E}_c$, $L\leq G$, and $x,y\in G\setminus L$ such that 

$(1)$ $|x|$ is an odd prime or $|x|=4$;

$(2)$ the orders of $Lx$ and $Ly$ (considered as elements of the group $G/L$) are equal.

\noindent Then there exist $y^{\prime}\in Ly$ with $|x|=|y|$.
\end{lemm}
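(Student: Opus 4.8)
The plan is to work prime-by-prime on the order of $x$ and exploit the hypothesis that $G\in\mathcal{E}_c$, so every Sylow subgroup of $G$ is elementary abelian or $\cong C_4$. Write $G = P\times Q$ where $P=\syl_2(G)$ and $Q$ is the direct product of the odd Sylow subgroups. First I would reduce to the two cases singled out in hypothesis $(1)$: either $|x|=q$ an odd prime, or $|x|=4$ (in which case $P\cong C_4\times C_2^k$ for some $k$, since $P$ must contain an element of order $4$ and be in $\mathcal{E}_c$). In the odd-prime case, the relevant Sylow $q$-subgroup $Q_q$ of $G$ is elementary abelian, so $x$ lies in $Q_q$ (after replacing $x$ by a suitable power, or just noting $|x|=q$ forces $x\in Q_q$ since $G$ is abelian and $x$ has prime order).

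\medskip

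Next I would analyze the quotient. Let $n=|Lx|=|Ly|$ in $G/L$. The key observation is that for an abelian group in $\mathcal{E}_c$, the order of an element $g$ and the order of its image $Lg$ in $G/L$ can differ only in a very controlled way: writing $g=g_2g_{2'}$ with $g_2$ the $2$-part and $g_{2'}$ the odd part, one has $|Lg|=\mathrm{lcm}(|Lg_2|,|Lg_{2'}|)$, and for each odd prime $q$ the $q$-component being elementary abelian forces $|g_q|\in\{1,q\}$ and hence $|Lg_q|\in\{1,q\}$ as well; for the $2$-part, $P\cong C_4\times C_2^k$ so $|g_2|\in\{1,2,4\}$. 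So when $|x|$ is an odd prime $q$: $|Lx|=q$ forces the $q$-part of $x$ to be nontrivial, $|Ly|=q$ forces the $q$-part $y_q$ of $y$ to have $|Ly_q|=q$, hence $|y_q|=q$; then I would take $y'$ to be (a power of) $y$ adjusted so that only the $q$-part survives — more carefully, since $y\notin L$ and we only need $y'\in Ly$ with $|x|=|y'|$, I would show $y_q\notin L$ and then pick the element of $Ly$ whose order is exactly $q$. The cleanest route: in $G/L$, the $q$-part of $Ly$ has order $q$ (as $q\mid |Ly|$ and the $q$-Sylow of $G/L$ is elementary abelian, being a quotient of the elementary abelian $Q_q$); lift a generator back to an element of order $q$ in $Q_q\leq G$, and verify it lies in the coset $Ly$ up to the freedom we have. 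Actually the statement only demands existence of $y'\in Ly$ with $|y'|=|x|$, so I would directly argue: since $|Ly|$ is divisible by $q$ and $Q_q$ is elementary abelian, $Ly$ meets $Q_q\setminus(L\cap Q_q)$, and any such representative has order $q=|x|$.

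\medskip

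For the case $|x|=4$: then $P=\syl_2(G)\cong C_4\times C_2^k$, and $|Lx|$ being the order of $Lx$ in $G/L$ must be $4$ (since $x\in P$ up to the odd part; but $|x|=4$ means $x_2$ has order $4$ and $x_{2'}$ has odd order dividing... wait, $|x|=4$ forces $x$ to be a $2$-element, so $x\in P$). Here $|Lx|\mid 4$; I would first observe $|Lx|=4$: if $|Lx|\leq 2$ then $x^2\in L$, but I want to rule this out — actually the hypothesis is that $|Lx|=|Ly|$, and we are given $|x|=4$, so possibly $|Lx|=2$ and then we'd need $|y'|=4$ with $|Ly|=2$, which forces $y^2\in L$; since $P\cong C_4\times C_2^k$, an element $y$ of $P$ with $y^2\in L$ but $|y|=4$ means $y^2\in L\cap P$ is the involution in the $C_4$ factor — I would produce $y'=y$ itself or an adjustment in $Ly$ of order $4$ using that $L\cap\langle\text{the }C_4\rangle$ determines things; this sub-case needs the extra freedom that $k$ copies of $C_2$ give us to adjust. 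The main obstacle I anticipate is exactly this $2$-part bookkeeping: unlike odd primes where $C_q$ elementary abelian makes everything rigid, the $C_4$ factor allows order to drop under quotienting ($|x|=4$, $|Lx|=2$), so the argument must use the direct-product structure $P=C_4\times C_2^k$ to find within the coset $Ly$ an element whose $C_4$-component still has order $4$. I would handle this by projecting to the $C_4$ factor: $|Lx|=|Ly|$ together with $|x|=4$ pins down the projection of $L$ and of $x,y$ to $C_4$ enough to guarantee a representative $y'$ of $Ly$ with $C_4$-projection of order $4$, hence $|y'|=4=|x|$, possibly after multiplying by an element of $L$ supported on the $C_2^k$ part to kill any extra odd or order-$2$ contributions.
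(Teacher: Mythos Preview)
Your proposal has a genuine gap stemming from a misreading of the class $\mathcal{E}_c$. By the paper's definition, a group $G\in\mathcal{E}_c$ has every Sylow subgroup either elementary abelian \emph{or isomorphic to $C_4$ itself}. In particular $\syl_2(G)$ is never $C_4\times C_2^k$ with $k\geq 1$. Hence once you assume $|x|=4$, the Sylow $2$-subgroup is forced to be exactly $C_4$, and $G$ contains a \emph{unique} involution, namely $x^2$. This uniqueness is precisely the missing ingredient that finishes the $|x|=4$ case, and your worry about ``the freedom that $k$ copies of $C_2$ give us'' is about a situation that cannot occur. As written, your $|x|=4$ case is left as an unresolved sketch because of this misunderstanding.

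The paper's argument is also much more direct than the Sylow-by-Sylow decomposition you propose. Set $k=|x|$; condition~(2) gives $y^k=z\in L$. One checks $\gcd(k,|z|)=1$ (otherwise $|y|$ would be divisible by $q^2$ for $q$ an odd prime, or by $8$, contradicting $G\in\mathcal{E}_c$). By B\'ezout there is $n$ with $z=z^{kn}$, so $(yz^{-n})^k=e$ and $y'=yz^{-n}\in Ly$ is nontrivial. If $|y'|=k$ we are done. The only remaining possibility is $k=4$ and $|y'|=2$; then $|Ly|\mid 2$, so by condition~(2) also $x^2\in L$, and by uniqueness of the involution $y'=x^2\in L$, forcing $y\in L$, a contradiction. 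Your odd-prime case can be salvaged along the lines you indicate, but the assertion that ``$Ly$ meets $Q_q\setminus(L\cap Q_q)$'' requires showing first that the $q'$-part of $y$ lies in $L$ (from $y^q\in L$ and $\gcd(q,|y_{q'}|)=1$), which you do not supply; the paper's B\'ezout trick handles both cases uniformly without any Sylow bookkeeping.
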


\begin{proof}
Put $|x|=k$. The second condition of the lemma implies that $y^k=z$ for some $z\in L$. Suppose that $(k,m)>1$, where $m=|z|$. If $k$ is an odd prime then $|y|$ is divisible by~$k^2$; if $k=4$ then $|y|$ is divisible by~$8$. In both cases we obtain a contradiction with  $G\in \mathcal{E}_c$. So $(k,m)=1$ and hence there exist $l$ and $n$ with $ml+kn=1$. One can see that $z=z^{ml+kn}=z^{kn}$. Therefore $y^k=z^{kn}$. This yields that $(yz^{-n})^k=e$. Since $y\notin L$, the element $yz^{-n}$ is nontrivial. If $|yz^{-n}|=k$ then $y^{\prime}=yz^{-n}$ is the required element. 

Suppose that $|yz^{-n}|\neq k$. In view of the first condition of the lemma, this is possible only if $k=4$ and $|yz^{-n}|=2$. Due to the second condition of the lemma and the equality $|yz^{-n}|=2$, we conclude that $x^2\in L$. Clearly, $|x^2|=2$. The group $G$ contains the unique element of order~$2$ because $G\in \mathcal{E}_c$. So $yz^{-n}=x^2\in L$. This means that $y\in L$, a contradiction with the assumption of the lemma. The lemma is proved.
\end{proof}

\begin{proof}[Proof of Lemma~\ref{cigwr}]
If $G\in \mathcal{E}$ then the statement of the lemma follows from~\cite[Theorem~1]{KR1}. Further we assume that $G\in \mathcal{E}_c\setminus \mathcal{E}$, i.e. $\syl_2(G)\cong C_4$. Let $f$ be an isomorphism from $\mathcal{A}$ to an $S$-ring $\mathcal{A}^{\prime}$ over $G$ with $e^f=e$. The $S$-ring $\mathcal{A}^{\prime}$ is the $U^f/L^f$-wreath product by~\cite[Theorem~3.3,~1]{EP3}. Clearly, $|U^f|=|U|$ and $|L^f|=|L|$. Since $U,L,U^f,L^f\in \mathcal{E}_c$, we conclude that $U^f\cong U$ and $L^f \cong L$. Without loss of generality, we may assume that $U^f=U$ and $L^f=L$.

The isomorphism $f$ induces the isomorphisms $f_1$ and $f_2$ from $\mathcal{A}_U$ to $\mathcal{A}_U^{\prime}$ and from $\mathcal{A}_{G/L}$ to $\mathcal{A}_{G/L}^{\prime}$ respectively. Since $\mathcal{A}_U$ and $\mathcal{A}_{G/L}$ are $\CI$-$S$-rings, Eq.~(1) implies that there exist $\varphi_1\in \aut(U)$ and $\varphi_2\in \aut(G/L)$ with
$$X_1^{\varphi_1}=X_1^{f_1}~\text{and}~X_2^{\varphi_2}=X_2^{f_2}$$
for every $X_1\in \mathcal{S}(\mathcal{A}_U)$ and every $X_2\in \mathcal{S}(\mathcal{A}_{G/L})$. One can see that $X^{\varphi_1^S}=X^{\varphi_2^S}$ for every $X\in \mathcal{S}(\mathcal{A}_S)$ because $X^{f_1^S}=X^{f^S}=X^{f_2^S}$. So $\varphi_1^S(\varphi_2^S)^{-1}\in \aut_S(\mathcal{A}_S)$. By the condition of the lemma, there exist $\sigma_1\in \aut_U(\mathcal{A}_U)$ and $\sigma_2\in \aut_{G/L}(\mathcal{A}_{G/L})$ such that $\varphi_1^S(\varphi_2^S)^{-1}=\sigma_1^S\sigma_2^S$. Put
$$\psi_1=\sigma_1^{-1}\varphi_1~\text{and}~\psi_2=\sigma_2\varphi_2.$$
Note that
$$X_1^{\psi_1}=X_1^{f_1}~\text{and}~X_2^{\psi_2}=X_2^{f_2}$$
for every $X_1\in \mathcal{S}(\mathcal{A}_U)$ and every $X_2\in \mathcal{S}(\mathcal{A}_{G/L})$ because $\sigma_1\in \aut_U(\mathcal{A}_U)$ and $\sigma_2\in \aut_{G/L}(\mathcal{A}_{G/L})$. This implies also that $U^{\psi_1}=U$ and $L^{\psi_1}=L$. The straightforward check shows that
$$\psi_1^S=(\sigma_1^S)^{-1}\varphi_1^S=(\sigma_1^S)^{-1}\sigma_1^S\sigma_2^S\varphi_2^S=\sigma_2^S\varphi_2^S=\psi_2^S.~\eqno(3)$$

Denote the Sylow $2$-subgroup of $G$ and one of its generators by~$C$ and~$c$ respectively. Put $c_1=c^2$ and $C_1=\langle c_1\rangle$. If $\syl_2(U)$ is trivial or $\syl_2(U)=C$ then there exists $H\in \mathcal{E}_c$ such that $G=H\times U$. If $\syl_2(U)=C_1$ then there exists $H\in \mathcal{E}$ such that $G=C\times H \times V$, where $V\in \mathcal{E}$ satisfies the equality $U=C_1\times V$.

One can present $H$ in the form $H=\langle x_1 \rangle \times \ldots \langle x_s \rangle$, where $|x_i|$ is an odd prime or equal to~$4$ for every $i\in \{1,\ldots,s\}$. Let $(Lx_i)^{\psi_2}=Lx_i^{\prime}$, $i\in \{1,\ldots,s\}$. In view of Lemma~\ref{orders}, we may assume that 
$$|x_i|=|x_i^{\prime}|~\eqno(4)$$
for every $i\in \{1,\ldots,s\}$. 
Put $H^{\prime}=\langle x_1^{\prime},\ldots, x_s^{\prime}\rangle$. One can see that $H^{\prime}=\langle x_1^{\prime} \rangle \times \ldots \langle x_s^{\prime} \rangle$ and $H^{\prime}\cap L=\{e\}$ because $\psi_2$ is an isomorphism and $H\cap U=H\cap L=\{e\}$. Together with Eq.~(4), this yields that
$$|H^{\prime}|=|H|.~\eqno(5)$$

Note that 
$$H^{\prime}\cap U=\{e\}.~\eqno(6)$$ 
Indeed, if $g\in H^{\prime}\cap U$ then $(Lg)^{\psi_2^{-1}}\in S\cap H/L$ because $S^{\psi_2}=S^{f_2}=S$. So $g\in H^{\prime} \cap L=\{e\}$.

If $\syl_2(U)$ is trivial or $\syl_2(U)=C$ then from the definition of $H$, Eqs.~(5) and~(6) it follows that 
$$G=H^{\prime}\times U.$$
Therefore one can define $\alpha\in \aut(G)$ as follows:
$$\alpha^U=\psi_1,~x_i^{\alpha}=x_i^{\prime},~i\in\{1,\ldots,s\}.$$

Suppose that $\syl_2(U)=C_1$. Let $(Lc)^{\psi_2}=Lc^{\prime}$. Due to Lemma~\ref{orders}, we may assume that $|c|=|c^{\prime}|=4$. So $\langle c^{\prime} \rangle=C$. In view of Eq.~(6), we have $H^{\prime}\cap V=\{e\}$ and hence $\langle H^{\prime}, V \rangle=H^{\prime} \times V$. From Eq.~(5) it follows that $|H^{\prime}\times V|=|H\times V|=|G|/4$. Thus,
$$G=C^{\prime}\times H^{\prime}\times V.$$
Therefore one can define $\alpha\in \aut(G)$ as follows:
$$\alpha^V=\psi_1,c^{\alpha}=c^{\prime},~x_i^{\alpha}=x_i^{\prime},~i\in\{1,\ldots,s\}.$$ 
Note that $c_1^{\alpha}=c_1^{\psi_1}=c_1$ because $c_1$ is the unique element of order~$2$ in $G$. So $\alpha^U=\psi_1^U$.

Let us show that $\alpha$ satisfies the following:
$$X^{\alpha}=X^f$$
for every $X\in \mathcal{S}(\mathcal{A})$. From the definition of $\alpha$ it follows that $L^{\alpha}=L^{\psi_1}=L$, $U^{\alpha}=U^{\psi_1}=U$, and $(Lh)^{\psi_2}=Lh^{\alpha}$ for every $h\in H$. Also $(Lc)^{\psi_2}=Lc^{\alpha}$ in case when $\syl_2(U)=C_1$. Let $X\in \mathcal{S}(\mathcal{A})$.  If $X\subseteq U$ then $X^{\alpha}=X^{\psi_1}=X^{f_1}=X^f$. Suppose that $X$ lies outside $U$. Then $L\leq \rad(X)$ and hence $X$ can be presented in the form 
$$X=Lh_1u_1\cup \ldots \cup Lh_tu_t,$$
where $h_i\in H$ if $|\syl_2(U)|\in\{1,4\}$ and $h_i\in (C\times H)\setminus C_1$ otherwise, and $u_i\in U\setminus L$ for each $i\in\{1,\ldots,t\}$. 

The straightforward check implies that
\begin{eqnarray}
\nonumber X^f/L=(X/L)^{f_2}=(X/L)^{\psi_2}=\{Lh_1u_1,\ldots,Lh_tu_s\}^{\psi_2}=\\
\nonumber\{(Lh_1)^{\psi_2}(Lu_1)^{\psi_2},\ldots,(Lh_t)^{\psi_2}(Lu_t)^{\psi_2}\}=\{Lh_1^{\alpha}u_1^{\psi_1},\ldots,Lh_t^{\alpha}u_t^{\psi_1}\}=X^{\alpha}/L.
\end{eqnarray}
The fifth equality holds by Eq.~(3). Since $L\leq \rad(X^f)$ and $L\leq \rad(X^{\alpha})$, we conclude that $X^{\alpha}=X^f$. 

Thus, we proved that for every isomorphism $f$ from $\mathcal{A}$ to any $S$-ring over $G$ satisfying $e^f=e$ there exists $\alpha\in \aut(G)$ with $X^{\alpha}=X^f$ for every $X\in \mathcal{S}(\mathcal{A})$. So Eq.~(1) holds for $\mathcal{A}$ and hence $\mathcal{A}$ is a $\CI$-$S$-ring. The lemma is proved.
\end{proof}

The proofs of the next two lemmas are similar to the proofs of~\cite[Proposition~4.1]{KR1} and~\cite[Lemma~4.2]{KR2}. We provide these proofs for the convenience of the readers. 

\begin{lemm}\label{trivial}
In the conditions of Lemma~\ref{cigwr}, suppose that 
$$\mathcal{A}_S=\mathbb{Z}S.~\eqno(\CA)$$ 
Then $\mathcal{A}$ is a $\CI$-$S$-ring.
\end{lemm}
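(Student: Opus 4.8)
The statement to prove is Lemma~\ref{trivial}: under the hypotheses of Lemma~\ref{cigwr}, if $\mathcal{A}_S=\mathbb{Z}S$ then $\mathcal{A}$ is a $\CI$-$S$-ring. The natural strategy is to reduce to Lemma~\ref{cigwr} by verifying its hypothesis on automorphism groups. That hypothesis asks for
$$\aut_S(\mathcal{A}_S)=\aut_U(\mathcal{A}_U)^S\aut_{G/L}(\mathcal{A}_{G/L})^S,$$
and Lemma~\ref{cigwr} already records that it suffices to have $\aut_S(\mathcal{A}_S)=\aut_U(\mathcal{A}_U)^S$ (or the analogous equality for $G/L$). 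So the plan is: first observe that $\mathcal{A}_S=\mathbb{Z}S$ forces $\aut_S(\mathcal{A}_S)=\aut(S)$ (the full automorphism group of the abelian section $S$), since the $S$-ring $\mathbb{Z}S$ has rank $|S|$ and every automorphism of $S$ preserves all its basic singletons. Then it remains to show that every automorphism of $S=U/L$ lifts to an automorphism of $U$ that lies in $\aut_U(\mathcal{A}_U)$ — in other words, that $\aut_U(\mathcal{A}_U)^S$ is already all of $\aut(S)$.

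The second step — the lifting — is where the group-theoretic structure of $G\in\mathcal{E}_c$ comes in, and it is the main obstacle. Since $G\in\mathcal{E}_c$, each Sylow subgroup of $U$ is elementary abelian or $C_4$; for the elementary abelian primes the section $S=U/L$ splits, $U=L\times W$ with $W\cong S$, so any $\psi\in\aut(S)$ extends trivially to $\aut(U)$ by acting as $\psi$ on $W$ and trivially on $L$. The delicate case, exactly as in Lemma~\ref{orders} and the proof of Lemma~\ref{cigwr}, is when $\syl_2(U)=C_4$ and $L$ has $2$-part of order $2$: here an automorphism of $S$ may move a generator of order $4$ to an element whose representative in $U$ naively has order $8$, which is impossible in $\mathcal{E}_c$. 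The fix is the same as in Lemma~\ref{orders}: use the multiplier/coprimality argument to correct the chosen preimage so that orders match, and use that $G$ (hence $U$) has a unique involution. I would invoke Lemma~\ref{orders} directly (or re-run its short argument) to produce, for each generator $x_i$ of a complement $H$ of $L$ in $U$, a preimage $x_i'$ of $\psi(Lx_i)$ with $|x_i'|=|x_i|$, and then assemble these into an automorphism of $U$ via the direct decomposition $U=L\times H$ (or $U=C_1\times V$ with $C_1$ the order-$2$ subgroup, exactly paralleling the case analysis in the proof of Lemma~\ref{cigwr}).

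Having produced $\widetilde\psi\in\aut(U)$ with $\widetilde\psi^{\,S}=\psi$, one must still check $\widetilde\psi\in\aut_U(\mathcal{A}_U)$, i.e. that $\widetilde\psi$ fixes every basic set of $\mathcal{A}_U$. But $\aut_U(\mathcal{A}_U)=\aut(\mathcal{A}_U)\cap\aut(U)$, and here one uses that $\mathcal{A}$ is the $S$-wreath product with $\mathcal{A}_S=\mathbb{Z}S$: the condition $L\le\rad(X)$ for basic sets outside $U$ together with triviality of $\mathcal{A}_S$ means the basic sets of $\mathcal{A}_U$ are exactly the preimages (under $U\to U/L$) of... no — rather, the point is cleaner: the hypothesis we actually need for Lemma~\ref{cigwr} is only $\aut_S(\mathcal{A}_S)=\aut_U(\mathcal{A}_U)^S$, and since $\aut_S(\mathcal{A}_S)=\aut(S)$ is as large as possible, it suffices to exhibit \emph{one} lift per $\psi$ \emph{inside} $\aut_U(\mathcal{A}_U)$; equivalently, to show $\aut_U(\mathcal{A}_U)^S$ is all of $\aut(S)$. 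For this, note $\mathcal{A}_U=\mathcal{A}_L\wr_S\mathbb{Z}S$ is itself a generalized wreath product with trivial top, so it is the ordinary wreath-type construction $\mathcal{A}_L\,\wr\,\mathbb{Z}S$ over $U=L\times H$, whose automorphism group contains $\aut(\mathcal{A}_L)\times\aut(\mathbb{Z}S)=\aut(\mathcal{A}_L)\times\sym(S)$; restricting the $\sym(S)$-factor to $\aut(S)\le\sym(S)$ acting on $H$ gives automorphisms of $U$ in $\aut_U(\mathcal{A}_U)$ surjecting onto $\aut(S)$. (In the $C_4$ case the same works with $H$ replaced by the appropriate complement and the order-correction of Lemma~\ref{orders} applied.) With $\aut_S(\mathcal{A}_S)=\aut_U(\mathcal{A}_U)^S$ established, Lemma~\ref{cigwr} applies verbatim and yields that $\mathcal{A}$ is a $\CI$-$S$-ring. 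The only real work is the $2$-group bookkeeping in the lift, and that is entirely parallel to arguments already carried out, so the proof should be short.
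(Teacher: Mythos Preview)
Your argument goes wrong at the very first step. You claim that $\mathcal{A}_S=\mathbb{Z}S$ forces $\aut_S(\mathcal{A}_S)=\aut(S)$, but in fact it forces $\aut_S(\mathcal{A}_S)=\{1\}$. Recall the definition: $\aut_S(\mathcal{A}_S)=\aut(\mathcal{A}_S)\cap\aut(S)$, where $\aut(\mathcal{A}_S)$ consists of those $f\in\sym(S)$ with $R(X)^f=R(X)$ for \emph{every} basic set $X$. When the basic sets are singletons $\{s\}$, a group automorphism $\varphi\in\aut(S)$ sends $R(\{s\})$ to $R(\{s^{\varphi}\})$, so $\varphi\in\aut(\mathbb{Z}S)$ only if $s^{\varphi}=s$ for all $s$. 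Thus $\aut(\mathbb{Z}S)=S_{\r}$ and $\aut_S(\mathbb{Z}S)$ is trivial. You are confusing $\aut(\mathcal{A}_S)$ with $\iso(\mathcal{A}_S)$: every $\varphi\in\aut(S)$ is an isomorphism of $\mathbb{Z}S$ onto itself, but not an automorphism in the sense used here.

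Once $\aut_S(\mathcal{A}_S)$ is seen to be trivial, the equality $\aut_S(\mathcal{A}_S)=\aut_U(\mathcal{A}_U)^S$ is automatic from the containment $\aut_U(\mathcal{A}_U)^S\le\aut_S(\mathcal{A}_S)$, and Lemma~\ref{cigwr} applies immediately --- this is exactly the paper's two-line proof. All of the lifting machinery you set up (invoking Lemma~\ref{orders}, splitting off complements, handling the $C_4$ case) is unnecessary. It is also not correct as stated: your assertion that $\mathcal{A}_U=\mathcal{A}_L\wr_S\mathbb{Z}S$ does not follow from the hypotheses. The $S$-wreath structure of $\mathcal{A}$ only says $L\le\rad(X)$ for basic sets $X$ \emph{outside} $U$; it places no such restriction on basic sets inside $U$, so $\mathcal{A}_U$ need not itself be a wreath product, and there is no reason for $\aut_U(\mathcal{A}_U)^S$ to be all of $\aut(S)$.
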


\begin{proof}
On the one hand, $\aut_U(\mathcal{A}_U)^S\leq \aut_S(\mathcal{A}_S)$. On the other hand, $\aut_S(\mathcal{A}_S)$ is trivial because $\mathcal{A}_S=\mathbb{Z}S$. So $\aut_U(\mathcal{A}_U)^S=\aut_S(\mathcal{A}_S)$ and $\mathcal{A}$ is a $\CI$-$S$-ring by Lemma~\ref{cigwr}. The lemma is proved.
\end{proof}

\begin{lemm}\label{cayleymin}
In the conditions of Lemma~\ref{cigwr}, suppose that 
$$\mathcal{A}_U~\text{or}~\mathcal{A}_{G/L}~\text{is cyclotomic,}~\mathcal{A}_S~\text{is Cayley minimal}.~\eqno(\CAA)$$
Then $\mathcal{A}$ is a $\CI$-$S$-ring.
\end{lemm}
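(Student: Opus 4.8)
The plan is to reduce the lemma to Lemma~\ref{cigwr}: I will show that hypothesis~$(\CAA)$ forces the equality $\aut_S(\mathcal{A}_S)=\aut_U(\mathcal{A}_U)^S$ when $\mathcal{A}_U$ is cyclotomic (and, symmetrically, $\aut_S(\mathcal{A}_S)=\aut_{G/L}(\mathcal{A}_{G/L})^S$ when $\mathcal{A}_{G/L}$ is cyclotomic), after which the ``in particular'' part of Lemma~\ref{cigwr} finishes the proof. The two cases are interchanged by the substitution $U\leftrightarrow G/L$ (recall that $S=U/L$ is also an $\mathcal{A}_{G/L}$-section and $(\mathcal{A}_{G/L})_S=\mathcal{A}_S$), so it is enough to treat the first one. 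Assume therefore that $\mathcal{A}_U=\cyc(K,U)$ for some $K\leq\aut(U)$.

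Since $S=U/L$ is an $\mathcal{A}_U$-section and cyclotomicity passes to sections, $\mathcal{A}_S=(\mathcal{A}_U)_S=\cyc(K^S,S)$. Because $\aut_S(\mathcal{A}_S)$ is the largest subgroup of $\aut(S)$ Cayley equivalent to $K^S$, its orbits on $S$ are exactly the basic sets of $\mathcal{A}_S$, i.e. $\orb(\aut_S(\mathcal{A}_S),S)=\mathcal{S}(\mathcal{A}_S)$. Now I squeeze $\orb(\aut_U(\mathcal{A}_U)^S,S)$ between two copies of $\mathcal{S}(\mathcal{A}_S)$. On one side, $\mathcal{A}_U=\cyc(K,U)=V(U_{\r}K,U)$ yields $K\leq\aut(\mathcal{A}_U)$, and $K\leq\aut(U)$ by definition, so $K\leq\aut_U(\mathcal{A}_U)$; passing to $S$ gives $K^S\leq\aut_U(\mathcal{A}_U)^S$, hence $\orb(\aut_U(\mathcal{A}_U)^S,S)$ is coarser than $\orb(K^S,S)=\mathcal{S}(\mathcal{A}_S)$. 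On the other side, $\aut_U(\mathcal{A}_U)^S\leq\aut_S(\mathcal{A}_S)$, so $\orb(\aut_U(\mathcal{A}_U)^S,S)$ is finer than $\orb(\aut_S(\mathcal{A}_S),S)=\mathcal{S}(\mathcal{A}_S)$. Hence $\orb(\aut_U(\mathcal{A}_U)^S,S)=\mathcal{S}(\mathcal{A}_S)=\orb(\aut_S(\mathcal{A}_S),S)$; that is, $\aut_U(\mathcal{A}_U)^S$ and $\aut_S(\mathcal{A}_S)$ are Cayley equivalent subgroups of $\aut(S)$.

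At this point the Cayley minimality of $\mathcal{A}_S$ assumed in $(\CAA)$ applies: the only subgroup of $\aut(S)$ Cayley equivalent to $\aut_S(\mathcal{A}_S)$ is $\aut_S(\mathcal{A}_S)$ itself, so $\aut_U(\mathcal{A}_U)^S=\aut_S(\mathcal{A}_S)$. Invoking the ``in particular'' statement of Lemma~\ref{cigwr} with this equality, we conclude that $\mathcal{A}$ is a $\CI$-$S$-ring.

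There is no deep obstacle here; the content is purely organizational, assembling facts from Sections~2--4. The only delicate point is keeping the directions of refinement straight in the squeezing step --- a larger permutation group gives a coarser orbit partition --- together with the two uses of the cyclotomicity of $\mathcal{A}_S$: once to identify $\orb(K^S,S)$ with $\mathcal{S}(\mathcal{A}_S)$, and once, via the ``largest Cayley equivalent group'' description of $\aut_S(\mathcal{A}_S)$, to identify $\orb(\aut_S(\mathcal{A}_S),S)$ with $\mathcal{S}(\mathcal{A}_S)$.
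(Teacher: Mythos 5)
Your proof is correct and follows essentially the same route as the paper: reduce to the equality $\aut_U(\mathcal{A}_U)^S=\aut_S(\mathcal{A}_S)$ via Cayley equivalence plus Cayley minimality, then invoke Lemma~\ref{cigwr}. The only difference is that you spell out (via the squeeze $K^S\leq\aut_U(\mathcal{A}_U)^S\leq\aut_S(\mathcal{A}_S)$) why $\aut_U(\mathcal{A}_U)^S\cong_{\cay}\aut_S(\mathcal{A}_S)$, a step the paper simply asserts.
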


\begin{proof}
Suppose that $\mathcal{A}_U$ is cyclotomic. Then $\mathcal{A}_S$ is also cyclotomic and $\aut_U(\mathcal{A}_U)^S\cong_{\cay}\aut_S(\mathcal{A}_S)$. This implies that $\aut_U(\mathcal{A}_U)^S=\aut_S(\mathcal{A}_S)$ because $\mathcal{A}_S$ is Cayley minimal. Therefore $\mathcal{A}$ is a $\CI$-$S$-ring by Lemma~\ref{cigwr}. The case when $\mathcal{A}_{G/L}$ is cyclotomic is similar. The lemma is proved.
\end{proof}

\begin{lemm}\label{restr}
In the conditions of Lemma~\ref{cigwr}, suppose that
$$U\cong C_{4p},~L\cong C_2,~\text{and}~\mathcal{A}_U=(\mathbb{Z}L\otimes \mathcal{A}_V)\wr_{(V\times L)/V}\mathbb{Z}(U/V),~\eqno(\CAAA)$$
where $p$ is a prime and $V$ is an $\mathcal{A}_U$-subgroup of order~$p$. Then $\mathcal{A}$ is a $\CI$-$S$-ring.
\end{lemm}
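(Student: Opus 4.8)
The plan is to derive the statement from Lemma~\ref{cigwr}: we are already under its hypotheses, so it suffices to check that $\aut_S(\mathcal{A}_S)=\aut_U(\mathcal{A}_U)^S$, whence $\mathcal{A}$ is a $\CI$-$S$-ring by Lemma~\ref{cigwr}. To fix notation, note first that $p$ is odd: since $C_{4p}\cong U\leq G\in\mathcal{E}_c$, the prime $p$ cannot be $2$, for otherwise $U\cong C_8$ and $\syl_2(G)$ would contain an element of order~$8$. Hence $U=C\times V$, where $C=\syl_2(U)=\langle c\rangle\cong C_4$, $V$ is the unique subgroup of $U$ of order~$p$, and $L=\langle c^2\rangle$; also $S=U/L\cong C_{2p}$, and $W:=(V\times L)/L$ is the subgroup of order~$p$ of $S$.

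The first step is to unwind $(\CAAA)$ and list the basic sets. Inside $V\times L=V\cup Vc^2$ the $S$-ring $\mathcal{A}_U$ agrees with $\mathbb{Z}L\otimes\mathcal{A}_V$, so its basic sets there are $\{e\}$, $\{c^2\}$ and the sets $X$ and $c^2X$ with $X$ a non-identity basic set of $\mathcal{A}_V$; outside $V\times L$, since $\mathbb{Z}(U/V)$ has only singleton basic sets and $V\leq\rad(X)$ for every such~$X$, the basic sets are exactly the two cosets $Vc$ and $Vc^3$. Taking images modulo $L$ then yields the basic sets of $\mathcal{A}_S$: they are $\{e\}$, the images of the non-identity basic sets of $\mathcal{A}_V$ (all contained in $W$, so that the isomorphism $V\cong W$, $v\mapsto vL$, carries $\mathcal{A}_V$ onto $(\mathcal{A}_S)_W$), and the unique non-trivial coset $S\setminus W$.

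The second step is to compute both automorphism groups through restriction. Since $U$ is cyclic, $L$, $V$ and $V\times L$ are characteristic in $U$, so $\aut(U)$ acts on $S$ and on $V$; using $U=C\times V$, $S=(C/L)\times V$ and the triviality of $\aut(C/L)=\aut(C_2)$, the maps $\aut(U)\to\aut(V)$ and $\aut(U)\to\aut(S)$ are surjective, $\aut(S)\to\aut(W)$ is an isomorphism (this is where $p$ odd enters), and $\aut(U)\to\aut(V)$ is $\aut(U)\to\aut(S)$ followed by the isomorphisms $\aut(S)\cong\aut(W)\cong\aut(V)$. Reading off the list above, an automorphism $\varphi\in\aut(U)$ preserves $\mathcal{S}(\mathcal{A}_U)$ if and only if $\varphi|_V$ preserves $\mathcal{S}(\mathcal{A}_V)$: the sets $\{e\}$ and $\{c^2\}$ are fixed automatically, $\varphi$ permutes $\{Vc,Vc^3\}$ because these are the only cosets of $V$ lying outside $V\times L$, and $\varphi(c^2X)=c^2X^{\varphi|_V}$ since $\varphi(c^2)=c^2$. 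Thus $\aut_U(\mathcal{A}_U)$ is the full preimage of $\aut_V(\mathcal{A}_V)$ in $\aut(U)$. An identical check on $\mathcal{S}(\mathcal{A}_S)$ shows that $\aut_S(\mathcal{A}_S)$ is the full preimage of $\aut_W((\mathcal{A}_S)_W)$ under $\aut(S)\to\aut(W)$, and since the isomorphism $V\cong W$ carries $\mathcal{A}_V$ onto $(\mathcal{A}_S)_W$ and $\varphi|_V$ onto $(\varphi^S)|_W$, the subgroups $\aut_V(\mathcal{A}_V)$ and $\aut_W((\mathcal{A}_S)_W)$ correspond under $\aut(V)\cong\aut(W)$. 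Combining this with the surjectivity of $\aut(U)\to\aut(S)$ gives $\aut_U(\mathcal{A}_U)^S=\aut_S(\mathcal{A}_S)$, and Lemma~\ref{cigwr} applies.

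The routine parts are unwinding the two generalized wreath products and the elementary arithmetic of $\aut(C_{4p})$; the one place demanding care — and the only point where $G\in\mathcal{E}_c$ is used beyond guaranteeing that $U$ is cyclic — is that $p$ odd makes $\aut(S)\to\aut(W)$ an isomorphism, which is precisely what allows the condition cut out on $\aut(U)$ by the basic sets inside $V$ to be transported without loss to the condition cut out on $\aut(S)$ by the basic sets inside $W$. If $\mathcal{A}_V=\mathbb{Z}V$ then $\mathcal{A}_S=\mathbb{Z}S$ and one could instead invoke Lemma~\ref{trivial}, but the argument above covers all $\mathcal{A}_V$.
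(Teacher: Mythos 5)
Your overall strategy is the same as the paper's: both proofs reduce Lemma~\ref{restr} to verifying $\aut_U(\mathcal{A}_U)^S=\aut_S(\mathcal{A}_S)$ and then invoke Lemma~\ref{cigwr} (the paper does the nontrivial inclusion by lifting an arbitrary $\varphi\in\aut_S(\mathcal{A}_S)$ to a $\psi\in\aut(U)$ with $\psi^C=\id_C$ and checking $\psi\in\aut_U(\mathcal{A}_U)$ on each type of basic set; you instead compute both groups explicitly via the decomposition $\aut(U)=\aut(C)\times\aut(V)$). Your description of the basic sets, the reduction of $p$ to odd primes, and the computation of $\aut_S(\mathcal{A}_S)$ are all correct.

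There is, however, one false intermediate claim: $\aut_U(\mathcal{A}_U)$ is \emph{not} the full preimage of $\aut_V(\mathcal{A}_V)$ under $\aut(U)\to\aut(V)$. By the paper's definition, $\aut_U(\mathcal{A}_U)=\aut(\mathcal{A}_U)\cap\aut(U)$ consists of automorphisms fixing \emph{each} basic set, not merely permuting the partition $\mathcal{S}(\mathcal{A}_U)$. Since $Vc$ and $Vc^{3}$ are two \emph{distinct} basic sets, an element of $\aut_U(\mathcal{A}_U)$ must fix each of them, which forces $c^{\varphi}=c$; in particular the inversion on $C$ extended by $\id_V$ lies in your "full preimage" but swaps $Vc$ and $Vc^{3}$ and hence is not in $\aut_U(\mathcal{A}_U)$. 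The correct group is $\{\id_C\}\times\aut_V(\mathcal{A}_V)$, a subgroup of index $2$ in what you assert. Fortunately this does not damage the conclusion: the discrepancy $\aut(C)\times\{\id_V\}$ is exactly the kernel of $\aut(U)\to\aut(S)$, so the image $\aut_U(\mathcal{A}_U)^S$ is the same whether one takes the full preimage or the correct subgroup, and the equality $\aut_U(\mathcal{A}_U)^S=\aut_S(\mathcal{A}_S)$ — and with it the application of Lemma~\ref{cigwr} — still goes through. You should restate that step as "$\varphi\in\aut_U(\mathcal{A}_U)$ if and only if $\varphi^C=\id_C$ and $\varphi|_V\in\aut_V(\mathcal{A}_V)$" and observe that the $\aut(C)$-factor dies in the passage to $S$.
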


\begin{proof}
Let $c\in U$ with $|c|=4$, $C=\langle c \rangle$, $c_1=c^2$, and $\pi:G\rightarrow G/L$ the canonical epimorphism. Note that $\mathcal{A}_S=\mathcal{A}_{V^{\pi}}\wr \mathbb{Z}(U^{\pi}/V^{\pi})\cong \mathcal{A}_V \wr \mathbb{Z}C_2$. Let $\varphi\in \aut_S(\mathcal{A}_S)$. Then $(c^{\pi})^{\varphi}=c^{\pi}$ because $c^{\pi}$ is the unique element of order~$2$ in $S$ and $(V^{\pi})^{\varphi}=V^{\pi}$ because $V^{\pi}$ is the unique subgroup of $S$ of order~$p$. Clearly, $U=C\times V$. So there exists $\psi \in \aut(U)$ such that
$$C^{\psi}=C,~V^{\psi}=V,~\psi^C=\id_C,~\text{and}~x^{\psi\pi}=x^{\pi\varphi}~\text{for every}~x\in V.$$
By the definition, $\psi^S=\varphi$. Prove that $\psi\in \aut_U(\mathcal{A}_U)$. Let $X\in \mathcal{S}(\mathcal{A}_U)$. If $X\nsubseteq V\times L$ then $X=Vc^k$, where $k\in\{-1,1\}$. One can see that 
$$X^{\psi}=(Vc^k)^{\psi}=V^{\psi}(c^k)^{\psi}=Vc^k=X.$$ 
If $X=\{c_1\}$ then $X^{\psi}=X$ by the definition of $\psi$. If $X\subseteq V$ then the equality $X^{\psi}=X$ follows from the definition of $\psi$ and the facts that $\mathcal{A}_V \cong \mathcal{A}_{V^{\pi}}$ and $\varphi^{V^{\pi}}\in \aut_{V^{\pi}}(\mathcal{A}_{V^{\pi}})$. If $X\subseteq V\times L\setminus (V \cup L)$ then $X=X_1c_1$, where $X_1\in \mathcal{S}(\mathcal{A}_V)$. It follows that 
$$X^{\psi}=X_1^{\psi}(c_1)^{\psi}=X_1c_1=X.$$
Therefore $X^{\psi}=X$ for every $X\in \mathcal{S}(\mathcal{A}_U)$ and hence $\psi\in \aut_U(\mathcal{A}_U)$. This implies that $\aut_U(\mathcal{A}_U)^S\geq \aut_S(\mathcal{A}_S)$. On the other hand, $\aut_U(\mathcal{A}_U)^S\leq \aut_S(\mathcal{A}_S)$. Thus, $\aut_U(\mathcal{A}_U)^S=\aut_S(\mathcal{A}_S)$ and $\mathcal{A}$ is a $\CI$-$S$-ring by Lemma~\ref{cigwr}. The lemma is proved.
\end{proof}

\section{$S$-rings over cyclic groups}

In this section we provide the results on $S$-rings over cyclic groups.

\begin{lemm}\label{circ}
Let $\mathcal{A}$ be an $S$-ring over a cyclic group $G$. Then one of the following statements holds:

$(1)$ $\rk(\mathcal{A})=2$;

$(2)$ $\mathcal{A}$ is cyclotomic;

$(3)$ $\mathcal{A}$ is the tensor product of two $S$-rings over proper subgroups of $G$;

$(4)$ $\mathcal{A}$ is the nontrivial $S$-wreath product for some $\mathcal{A}$-section $S$.

\noindent~If in addition $|G|$ is prime then $\mathcal{A}$ is cyclotomic.
\end{lemm}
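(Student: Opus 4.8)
The plan is to separate the dichotomy (1)--(4) from the final assertion about groups of prime order: the former I would quote from the structure theory of Schur rings over cyclic groups, the latter I would prove directly.

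Statements (1)--(4) form the structure theorem for Schur rings over cyclic groups due to Leung and Man, which I would invoke. For orientation, that theorem is proved by induction on $|G|$. If $\rk(\mathcal{A})=2$, case~(1) holds. Otherwise one shows, using Schur's classical results on $S$-rings over cyclic groups (in particular that a primitive $S$-ring over a cyclic group of composite order has rank~$2$), that $\mathcal{A}$ has a proper non-trivial $\mathcal{A}$-subgroup $H$. Fixing such an $H$, one distinguishes the case $\gcd(|H|,|G/H|)=1$, where $G=H\times (G/H)$ and one uses Lemma~\ref{tenspr} together with the multiplier theorem (Lemma~\ref{burn}) to try to split $\mathcal{A}$ as a tensor product (case~(3)), from the case in which the radicals $\rad(X)$ of the basic sets lying outside an appropriate $\mathcal{A}$-subgroup $U$ all contain a common non-trivial $\mathcal{A}$-subgroup $L\trianglelefteq G$, which gives a non-trivial $S$-wreath decomposition over $S=U/L$ (case~(4)). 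When neither happens, $\aut(\mathcal{A})$ turns out to lie in the holomorph of $G$, and since every $S$-ring over a cyclic group is schurian this forces $\mathcal{A}$ to be cyclotomic (case~(2)). The delicate point, and what I regard as the main obstacle in a from-scratch proof, is the verification that a tensor or a wreath decomposition is always available once $\mathcal{A}$ is neither trivial nor cyclotomic; this is exactly the content of the Leung--Man analysis, which I would cite rather than reproduce.

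For the final assertion I would argue directly from the multiplier theorem. Let $|G|=p$ be prime and write $G=\langle g\rangle$. Every integer $m$ with $1\le m\le p-1$ is coprime to $p$, so by Lemma~\ref{burn} the power maps $x\mapsto x^m$ permute $\mathcal{S}(\mathcal{A})$; equivalently $\aut(G)\cong C_{p-1}$ acts on $\mathcal{S}(\mathcal{A})$, fixing $\{e\}$ and permuting the basic sets contained in $G^{\#}$. Identifying $G^{\#}$ with $\aut(G)$ via $g^i\leftrightarrow (x\mapsto x^i)$ turns this action into the regular action of $\aut(G)$ on itself, so the basic sets inside $G^{\#}$ form a partition of $\aut(G)$ stable under all translations. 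Let $H$ be the block through the identity; for $b\in H$ the translate $b^{-1}H$ is a block containing the identity, hence $b^{-1}H=H$, so $bH=H$. It follows that $H\le \aut(G)$ is a subgroup and that every other block is a coset of $H$, whence $\mathcal{S}(\mathcal{A})=\orb(H,G)$ and $\mathcal{A}=\cyc(H,G)$ is cyclotomic (the case $\rk(\mathcal{A})=2$ is included, with $H=\aut(G)$). Alternatively, this follows from (1)--(4): over $C_p$ there is no proper non-trivial subgroup, so cases~(3) and~(4) cannot occur, and in case~(1) one has $\mathcal{A}=\cyc(\aut(G),G)$.
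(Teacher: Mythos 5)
Your proposal is correct and takes essentially the same route as the paper: the dichotomy (1)--(4) is quoted from the known structure theory of circulant $S$-rings (the paper cites Evdokimov--Ponomarenko rather than Leung--Man, but the content is the same), and the alternative you give at the end for the prime case --- statements (3) and (4) are vacuous over a group with no proper nontrivial subgroups, and rank~$2$ over $C_p$ gives $\cyc(\aut(G),G)$ --- is exactly the paper's argument; your direct Burnside-style argument via Lemma~\ref{burn} is also valid. One caveat on your orientation sketch: the assertion that every $S$-ring over a cyclic group is schurian is false (Evdokimov and Ponomarenko constructed non-schurian circulant $S$-rings), but since you only cite the structure theorem rather than reprove it, this does not affect your proof.
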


\begin{proof}
The first part of the lemma follows from~\cite[Theorems~4.1, 4.2]{EP2}. The second part of the lemma follows from the first part and the next observation: if $|G|$ is a prime and $\mathcal{A}$ is the $S$-ring of rank~$2$ over $G$ then $\mathcal{A}=\cyc(\aut(G),G)$. The lemma is proved.
\end{proof}

\begin{lemm}\label{circcaymin}
Let $\mathcal{A}$ be a cyclotomic $S$-ring over a cyclic group $G$. Then $\mathcal{A}$ is Cayley minimal.
\end{lemm}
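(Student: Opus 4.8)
The plan is to use the explicit structure of $\aut(G)$ for a cyclic group. Fix a generator $g$ of $G$. Since $G$ is cyclic, the map sending an automorphism $\sigma$ to the image $g^{\sigma}$ of $g$ is a bijection from $\aut(G)$ onto the set of generators of $G$; under this bijection a subgroup $H\leq\aut(G)$ is carried onto the orbit $g^{H}=\{g^{\sigma}:~\sigma\in H\}$, and, the map being injective, $H$ is recovered from $g^{H}$. In particular a subgroup of $\aut(G)$ is completely determined by the block of its orbit partition on $G$ that contains $g$.

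From here the argument is short. Since $\mathcal{A}$ is cyclotomic, $\mathcal{A}=\cyc(K,G)$ for some $K\leq\aut(G)$, and, as recalled above, $M:=\aut_G(\mathcal{A})$ is the largest subgroup of $\aut(G)$ Cayley equivalent to $K$; hence $\orb(M,G)=\orb(K,G)=\mathcal{S}(\mathcal{A})$. I would then take an arbitrary $K^{\prime}\leq\aut(G)$ with $K^{\prime}\cong_{\cay}M$, i.e. $\orb(K^{\prime},G)=\orb(M,G)$. The block of $\orb(K^{\prime},G)$ containing $g$ is $g^{K^{\prime}}$, which equals $K^{\prime}$ by the observation above, and the block of $\orb(M,G)$ containing $g$ is $g^{M}=M$; since the two partitions coincide, these blocks coincide, so $K^{\prime}=M$. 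Thus $\{K\leq\aut(G):~K\cong_{\cay}\aut_G(\mathcal{A})\}=\{\aut_G(\mathcal{A})\}$, which is exactly the assertion that $\mathcal{A}$ is Cayley minimal.

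I do not expect a genuine obstacle here: everything rests on the elementary fact that for cyclic $G$ the evaluation map $\sigma\mapsto g^{\sigma}$ is a bijection from $\aut(G)$ onto the set of generators of $G$, so that each subgroup of $\aut(G)$ occurs verbatim as the orbit of $g$. The only points needing routine care are the verification of this bijection and the use (quoted from the discussion of cyclotomic $S$-rings earlier in the paper) that for a cyclotomic $\mathcal{A}$ the group $\aut_G(\mathcal{A})$ realizes the basic sets of $\mathcal{A}$ as its orbits on $G$. Cyclicity of $G$ is used essentially: it is precisely what converts the relation $\cong_{\cay}$ into set-theoretic equality of subgroups of $\aut(G)$.
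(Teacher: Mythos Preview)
Your proposal is correct and is essentially the same argument as the paper's. Both rest on the fact that for cyclic $G$ the evaluation map $\sigma\mapsto g^{\sigma}$ at a fixed generator $g$ is injective on $\aut(G)$, so a subgroup $H\leq\aut(G)$ is determined by the orbit $g^{H}$; the paper phrases this as $|\aut_G(\mathcal{A})|=|X|$ (with $X$ the basic set containing $g$) and then excludes proper subgroups by cardinality, while you recover $H$ directly from $g^{H}$, but the content is identical (your line ``$g^{K'}$, which equals $K'$'' should of course be read as ``which determines $K'$ via the bijection'').
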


\begin{proof}
Let $x$ be a generator of $G$ and $X\in \mathcal{S}(\mathcal{A})$ with $x\in X$. Clearly, if $\varphi\in \aut_G(\mathcal{A})$ and $x^{\varphi}=x$ then $\varphi$ is trivial. So $|\aut_G(\mathcal{A})|=|X|$. This implies that there are no proper subgroups $K$ of $\aut_G(\mathcal{A})$ with $\mathcal{A}=\cyc(K,G)$. Therefore $\mathcal{A}$ is Cayley mininmal. The lemma is proved.
\end{proof}

\section{$p$-$S$-rings}

Let $p$ be a prime number. An $S$-ring $\mathcal{A}$ over a $p$-group $G$ is called a \emph{$p$-S-ring} if every basic set of $\mathcal{A}$ has $p$-power size.

\begin{lemm}\label{minpring}
Let $G$ be an abelian group, $K \in \Sup_2^{\min}(G_\r)$, and $\mathcal{A}=V(K, G)$. Suppose that $H$ is an $\mathcal{A}$-subgroup of $G$ such that $G/H$ is a $p$-group for some prime $p$. Then $\mathcal{A}_{G/H}$ is a $p$-S-ring.
\end{lemm}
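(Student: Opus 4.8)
The plan is to identify $\mathcal{A}_{G/H}$ with $V(\bar P,G/H)$ for a suitable $p$-subgroup $\bar P\leq\sym(G/H)$: the basic sets of such an $S$-ring are the orbits of a point stabilizer inside $\bar P$, so being orbits of a $p$-group they automatically have $p$-power size. We may assume $G/H$ is nontrivial. Since $H$ is an $\mathcal{A}$-subgroup, the group $\bar K=K^{G/H}$ is well defined, $\bar K\in\Sup((G/H)_{\r})$, and $\mathcal{A}_{G/H}=V(\bar K,G/H)$ because $\mathcal{A}$ is schurian. Choose a Sylow $p$-subgroup $\bar P$ of $\bar K$ with $(G/H)_{\r}\leq\bar P$ (possible, as $(G/H)_{\r}$ is a $p$-group), and let $P^{*}\leq K$ be the full preimage of $\bar P$ under $K\to\bar K$. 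Since the image of $G_{\r}$ in $\bar K$ is exactly $(G/H)_{\r}\leq\bar P$, we get $G_{\r}\leq P^{*}\leq K$.

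Next I would pass to $\mathcal{B}=V(P^{*},G)$ and $\widetilde P=(P^{*})^{(2)}=\aut(\mathcal{B})$; then $\widetilde P\in\Sup_2(G_{\r})$ and $G_{\r}\leq P^{*}\leq\widetilde P\leq K$, the last inclusion holding because $P^{*}\leq K$ forces $\mathcal{B}\supseteq\mathcal{A}$ and hence $\aut(\mathcal{B})\leq\aut(\mathcal{A})=K$. The heart of the argument is to show that $\widetilde P$ is a $G$-complete subgroup of $K$. Let $G'\leq K$ be any regular subgroup with $G'\cong G$. As $H$ is an $\mathcal{A}$-subgroup and $G'\leq\aut(\mathcal{A})$, $G'$ permutes the cosets of $H$ and so induces a permutation group on $G/H$; this group is transitive (because $G'$ is transitive on $G$) and, being a quotient of $G'$, is abelian, hence regular. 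Therefore the image $\bar{G'}$ of $G'$ in $\bar K$ has order $|G/H|$ and is a $p$-group, so by Sylow's theorem $\bar{G'}$ is conjugate in $\bar K$ to a subgroup of $\bar P$; lifting a conjugating element to $K$ shows that $G'$ is conjugate in $K$ to a subgroup of $P^{*}\leq\widetilde P$. Thus $\widetilde P\preceq_{G}K$, and since $K$ is a minimal element of $\Sup_2(G_{\r})$ with respect to $\preceq_{G}$, we conclude $\widetilde P=K$.

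From $\widetilde P=K$ it follows that $V(P^{*},G)=V(\widetilde P,G)=V(K,G)=\mathcal{A}$, whence
$$\mathcal{A}_{G/H}=V\bigl((P^{*})^{G/H},\,G/H\bigr)=V(\bar P,G/H).$$
Because $\bar P$ is a $p$-group, every basic set of $V(\bar P,G/H)$ is an orbit of the $p$-group $(\bar P)_{\bar e}$ and therefore has $p$-power size; hence $\mathcal{A}_{G/H}$ is a $p$-$S$-ring.

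The main obstacle is the $G$-completeness of $\widetilde P$ in $K$; the feature that makes it go through is that $G$, hence every regular subgroup $G'\leq K$ isomorphic to $G$, is abelian, so that the transitive action induced by $G'$ on the $p$-group $G/H$ is forced to be regular and $\bar{G'}$ becomes a $p$-group to which Sylow's theorem applies. The remaining ingredients — idempotence of $2$-closure (so that $V(P^{*},G)$ and $V(\widetilde P,G)$ share the same basic sets), the inclusion $\mathcal{B}\supseteq\mathcal{A}$, and the translation between $K_e$-orbits on $G$ and on $G/H$ — are routine.
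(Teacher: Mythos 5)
Your proof is correct. The paper itself gives no argument here: it simply cites \cite[Lemma~5.2]{KM}, so your write-up is in effect a self-contained reconstruction of that cited result, and it follows the standard route one would expect for it. The key steps all check out: $\bar K=K^{G/H}$ is well defined because $H$ is an $\mathcal{A}$-subgroup and $K=K^{(2)}=\aut(\mathcal{A})$ preserves the $H$-coset partition; the preimage $P^{*}$ of a Sylow $p$-subgroup $\bar P\supseteq (G/H)_{\r}$ contains $G_{\r}$; the $2$-closure $\widetilde P=(P^{*})^{(2)}$ lies in $\Sup_2(G_{\r})$ and sits inside $K$ because $V(P^{*},G)\supseteq V(K,G)$ forces $\aut(V(P^{*},G))\leq\aut(\mathcal{A})=K$; and the $G$-completeness argument is exactly where abelianness of $G$ enters, since a transitive abelian group is regular, so the induced group $\bar{G'}$ on the $p$-group $G/H$ is a $p$-group and Sylow conjugacy applies. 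Minimality of $K$ then gives $\widetilde P=K$, hence $\mathcal{A}_{G/H}=V(\bar P,G/H)$ with $\bar P$ a $p$-group, and orbits of the $p$-group $(\bar P)_{\bar e}$ have $p$-power size. The only cosmetic remark is that you should make explicit that the identification $\mathcal{A}_S=V(K^S,S)$ is being applied to $P^{*}$ in place of $K$ after establishing $V(P^{*},G)=\mathcal{A}$, which you do; nothing is missing.
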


\begin{proof}
The statement of the lemma follows from~\cite[Lemma~5.2]{KM}.
\end{proof}

\begin{lemm}\label{p2}
Let $G\cong C_4$ or $G\cong C_p^2$, where $p$ is a prime. Then $\mathcal{A}=\mathbb{Z}G$ or $\mathcal{A}\cong \mathbb{Z}C_q\wr \mathbb{Z}C_q$, where $q\in\{2,p\}$. In both cases $\mathcal{A}$ is cyclotomic.
\end{lemm}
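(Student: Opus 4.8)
The plan is to handle both cases at once by writing $q$ for $2$ or $p$ according as $G\cong C_4$ or $G\cong C_p^2$, so that $|G|=q^2$ and, by hypothesis, $\mathcal{A}$ is a $q$-$S$-ring over $G$. First I would observe that no basic set of $\mathcal{A}$ can have size $q^2$: such a set would be all of $G$, which is impossible since $\{e\}$ is already a basic set and basic sets are disjoint. As every basic set has $q$-power size and size at most $q^2-1$, each basic set has size $1$ or $q$. Let $H_0$ be the union of the singleton basic sets. Since $\{a\}\{b\}=\{ab\}\in\mathcal{S}(\mathcal{A})$ whenever $\{a\},\{b\}\in\mathcal{S}(\mathcal{A})$, and singletons are closed under taking inverse, $H_0$ is an $\mathcal{A}$-subgroup, so $|H_0|\in\{1,q,q^2\}$. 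The case $|H_0|=1$ is impossible, for then $G^{\#}$ would be a disjoint union of basic sets of size $q$, forcing $q\mid q^2-1$. If $|H_0|=q^2$ then $\mathcal{A}=\mathbb{Z}G$, which is the cyclotomic $S$-ring $\cyc(\{\id\},G)$, and we are done.

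So the substantive case is $|H_0|=q$; put $L=H_0$, an $\mathcal{A}$-subgroup of prime index $q$ in $G$ with $\mathcal{A}_L=\mathbb{Z}L\cong\mathbb{Z}C_q$. The key step is to show every basic set $X$ disjoint from $L$ is a full coset of $L$. Such an $X$ has size $q$; by Lemma~\ref{intersection} the number $k=|X\cap Lx|$ is the same for every $x\in X$, so $k$ divides $|X|=q$ and hence $k=1$ or $k=q$. If $k=1$ then $X$ meets $q$ distinct cosets of $L$; but $[G:L]=q$ and $X$ avoids the coset $L$ itself, leaving only $q-1$ cosets available, a contradiction. Thus $k=q$, i.e. $X$ is a single coset of $L$. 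It follows that $\mathcal{A}=\mathbb{Z}L\wr\mathbb{Z}(G/L)\cong\mathbb{Z}C_q\wr\mathbb{Z}C_q$.

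Finally I would check that $\mathbb{Z}C_q\wr\mathbb{Z}C_q$, realized as above, is cyclotomic. Write $L=\langle v\rangle$ and choose $t\in\aut(G)$ fixing $L$ pointwise and inducing on $G/L$ a generator of a subgroup of $\aut(G)$ of order $q$ — a transvection $w\mapsto w+v$ (for $G\cong C_p^2$, extending $v$ to a basis $\{v,w\}$) or the inversion $x\mapsto x^{-1}$ (for $G\cong C_4$, with $v=x^2$). One verifies that the orbits of $\langle t\rangle$ on $G$ are precisely the $q$ singletons inside $L$ together with the $q-1$ nontrivial cosets of $L$, that is, $\orb(\langle t\rangle,G)=\mathcal{S}(\mathcal{A})$; hence $\mathcal{A}=\cyc(\langle t\rangle,G)$ is cyclotomic. (For $G\cong C_4$ one could alternatively invoke Lemma~\ref{circ} to reach the same trichotomy, since $C_4$ admits no decomposition as a tensor product of $S$-rings over proper subgroups.) I do not expect a genuine obstacle here; the only point needing slight care is the coset-counting argument ruling out $k=1$, which relies essentially on $L$ having prime index $q$ in $G$ so that there are exactly $q$ cosets.
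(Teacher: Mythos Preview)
Your argument is correct and, unlike the paper's own proof (which for $C_p^2$ simply cites \cite[Lemma~5.1]{KR1} and for $C_4$ says the statement ``can be verified directly''), gives a self-contained and uniform treatment of both cases. The counting step via Lemma~\ref{intersection}, forcing $k=q$ because only $q-1$ nontrivial $L$-cosets are available, is exactly the right mechanism and is presumably what the cited reference does as well; what you gain is that the reader need not chase the citation. One small point of phrasing: your description of $t$ as ``inducing on $G/L$ a generator of a subgroup of $\aut(G)$ of order $q$'' is muddled --- in fact both the transvection (for $C_p^2$) and the inversion (for $C_4$) induce the \emph{identity} on $G/L$, since each nontrivial $L$-coset is sent to itself; that is precisely why the orbits outside $L$ are full cosets. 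The concrete automorphisms you exhibit are correct, so this is only a wording issue, not a gap.
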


\begin{proof}
If $G\cong C_4$ then the statement of the lemma can be verified directly. If $G\cong C_p^2$ then the statement of the lemma follows from~\cite[Lemma~5.1]{KR1}. 
\end{proof}

\section{$S$-rings over $C_4\times C_p^2$}

Let $p$ be a prime number and $G\cong C\times H$, where $C\cong C_4$ and $H\cong C_p^2$. Denote a generator of $C$ by $c$. Put $c_1=c^2$ and $C_1=\langle c_1 \rangle$. These notations are valid until the end of the paper. The main goal of this section is to describe schurian $S$-rings over $G$ whose automorphism groups belong to $\Sup_2^{\rm min}(G_\r)$ in case when $p$ is odd. The main result of the section is given in the next proposition.

\begin{theo}\label{main2}
Let $p$ be an odd prime, $K \in \Sup_2^{\rm min}(G_\r)$, and $\mathcal{A}=V(K,G)$. Then one of the following statements holds:

$(1)$ $\rk(\mathcal{A})=2$;

$(2)$ $\mathcal{A}$ is the tensor product of two $S$-rings over proper subgroups of $G$;

$(3)$ $\mathcal{A}$ is the nontrivial $S=U/L$-wreath product satisfying one of the conditions $(\CA)$-$(\CAAA)$.

\end{theo}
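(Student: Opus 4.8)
The plan is to reduce to a structural dichotomy by examining the $\mathcal{A}$-subgroup lattice of $G$ and then invoking the results of Sections~6 and~7. Since $G=C\times H$ with $C\cong C_4$, $H\cong C_p^2$, and $p$ odd, the group $G$ has a unique Sylow $2$-subgroup $C$ and a unique Sylow $p$-subgroup $H$, both of which are characteristic; hence $C$ and $H$ are automatically $\mathcal{A}$-subgroups (being fixed by $\aut_G(\mathcal{A})$ and $\mathcal{A}$-definable via the Schur multiplier theorem, Lemma~\ref{burn}, or more directly since $\underline{C}$ and $\underline{H}$ are rational and $\mathcal{A}$ contains all rational $\mathcal{A}$-subgroups generated by basic sets). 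By Lemma~\ref{tenspr}, $\mathcal{A}\geq \mathcal{A}_C\otimes \mathcal{A}_H$, with equality whenever $\mathcal{A}_C=\mathbb{Z}C$ or $\mathcal{A}_H=\mathbb{Z}H$. By Lemma~\ref{p2}, $\mathcal{A}_C$ is either $\mathbb{Z}C$ or $\mathbb{Z}C_2\wr\mathbb{Z}C_2$ (so $\mathcal{A}_C=\mathbb{Z}C_1\wr\mathbb{Z}(C/C_1)$, with $C_1$ an $\mathcal{A}$-subgroup), and $\mathcal{A}_H$ is either $\mathbb{Z}H$ or $\mathbb{Z}C_p\wr\mathbb{Z}C_p$. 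This gives four cases for the pair $(\mathcal{A}_C,\mathcal{A}_H)$.

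First I would dispose of the cases where a full group ring appears on one factor. If $\mathcal{A}_H=\mathbb{Z}H$ and $\mathcal{A}_C=\mathbb{Z}C$, then $\mathcal{A}=\mathbb{Z}C\otimes\mathbb{Z}H=\mathbb{Z}G$, which is the tensor product of $S$-rings over proper subgroups, so statement~(2) holds; similarly if exactly one factor is the full group ring and the other is not, Lemma~\ref{tenspr}(2) forces $\mathcal{A}=\mathcal{A}_C\otimes\mathcal{A}_H$ as a genuine tensor decomposition over proper subgroups, giving~(2) again — unless $\mathcal{A}$ itself has rank~$2$, but that cannot happen once a factor is a proper nontrivial $S$-ring. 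The remaining case is $\mathcal{A}_C=\mathbb{Z}C_1\wr\mathbb{Z}(C/C_1)$ and $\mathcal{A}_H=\mathbb{Z}C_p\wr\mathbb{Z}C_p$, where the tensor product need not exhaust $\mathcal{A}$; here I expect the main work.

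In that remaining case, write $V$ for the $\mathcal{A}$-subgroup of order $p$ with $\mathcal{A}_H=\mathbb{Z}V\wr\mathbb{Z}(H/V)$. The candidate $\mathcal{A}$-subgroups are then $C_1$, $V$, $C_1\times V$, $C$, $H$, $C\times V$, $C_1\times H$, and products thereof; the key point is to show that unless $\mathcal{A}$ has rank~$2$ or is a tensor product, $\mathcal{A}$ is forced to be a generalized wreath product $\mathcal{A}_U\wr_{U/L}\mathcal{A}_{G/L}$ for one of the sections arising from these subgroups, and that the resulting $(U,L)$ satisfies one of $(\CA)$--$(\CAAA)$. Concretely, I would argue that a basic set $X$ lying outside a proper $\mathcal{A}$-subgroup $W$ must, by rationality (Lemma~\ref{burn}) together with the structure of $\mathcal{A}_C$ and $\mathcal{A}_H$ and Lemma~\ref{separat}, have radical containing one of $C_1$, $V$, or $C_1\times V$; this forces the generalized wreath decomposition with $L$ equal to that radical and $U$ chosen minimally. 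Tracking which section $S=U/L$ occurs: if $L=C_1$ one lands in the configuration $(\CAAA)$ with $U\cong C_{4p}$ (after checking $\mathcal{A}_U$ has the stated form, which follows since $\mathcal{A}_U$ restricts to $\mathbb{Z}C_1$ on $C_1$, to $\mathbb{Z}C_2\wr\mathbb{Z}C_2$ structure on $C$, and is the $V$-wreath product on $V\times L$), while if $L=V$ or $L=C_1\times V$ the section $S$ has the trivial $S$-ring or a cyclotomic one so $(\CA)$ or $(\CAA)$ applies (using Lemma~\ref{circ} and Lemma~\ref{circcaymin} to see $\mathcal{A}_S$ is cyclotomic and hence Cayley minimal when $S$ is cyclic).

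The main obstacle I anticipate is the bookkeeping in this last case: one must verify that every schurian $S$-ring with $\mathcal{A}_C=\mathbb{Z}C_1\wr\mathbb{Z}(C/C_1)$ and $\mathcal{A}_H=\mathbb{Z}V\wr\mathbb{Z}(H/V)$ that is neither of rank~$2$ nor a tensor product genuinely decomposes as a generalized wreath product of the required shape, and that no ``sporadic'' $S$-ring escapes — this is where the hypothesis $K\in\Sup_2^{\min}(G_\r)$ should be used (via Lemma~\ref{minpring}, applied to the $p$-group quotient $G/C_1\cong C_2\times C_p^2$... more precisely to $p$-sections) to rule out intermediate ranks and pin down the radicals. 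A delicate sub-point is ensuring that the generator $c$ of order~$4$ is compatible with the wreath structure — i.e. that $\rad(X)\supseteq C_1$ rather than merely $\rad(X)\cap C\neq\{e\}$ being problematic — which is exactly what Lemma~\ref{separat} applied with $H=C_1\times V$ (or its relevant sub/overgroup) is designed to give. Once the decomposition and the identification with $(\CA)$--$(\CAAA)$ are in hand, the theorem follows.
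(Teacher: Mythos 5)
Your opening claim --- that $C$ and $H$ are ``automatically $\mathcal{A}$-subgroups'' because they are characteristic in $G$ --- is false, and this invalidates the structure of the whole argument. An $\mathcal{A}$-subgroup must be a union of basic sets of $\mathcal{A}$; being characteristic (or having rational indicator $\underline{C}$, $\underline{H}$) gives nothing of the sort, and Lemma~\ref{burn} only says that the \emph{image} of a basic set under a power map is again basic, not that every rational subset is an $\mathcal{A}$-set. The $S$-ring of rank~$2$ over $G$ is already a counterexample: its only $\mathcal{A}$-subgroups are $\{e\}$ and $G$. Indeed, if your claim were true, statement~(1) of the theorem could never occur (one would always have $\rk(\mathcal{A})\geq\rk(\mathcal{A}_C\otimes\mathcal{A}_H)>2$), whereas the theorem lists it as a live alternative. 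The paper's proof is organized precisely around this issue: the case where both $C$ and $H$ are $\mathcal{A}$-subgroups (Proposition~\ref{proof1}) is the easy one and always yields the tensor decomposition~(2) --- even in your ``hard'' subcase where both $\mathcal{A}_C$ and $\mathcal{A}_H$ are wreath products, a short counting argument via Lemma~\ref{intersection} (the relation $2\mu=p$ is impossible for odd $p$) forces $X=X_C\times X_H$ for every basic set $X$. The real work, which your proposal never engages with, is in Propositions~\ref{proof2}--\ref{proof4}: when $H$ is not an $\mathcal{A}$-subgroup, and especially when $C$ is not an $\mathcal{A}$-subgroup, one must classify the basic sets containing elements of order~$4$ using Lemma~\ref{sch} (the sets $X^{[2]}$, $X^{[p]}$), Lemma~\ref{separat}, and Lemma~\ref{circ} on the cyclic sections of order $4p$, before the generalized wreath structure and the conditions $(\CA)$--$(\CAAA)$ can be extracted.

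A secondary problem: even granting your framework, in the subcase with $\mathcal{A}_C=\mathbb{Z}C_1\wr\mathbb{Z}(C/C_1)$ and $\mathcal{A}_H=\mathbb{Z}V\wr\mathbb{Z}(H/V)$ your sketch of ``radical containing one of $C_1$, $V$, or $C_1\times V$'' is only a plan, not an argument; you do not explain why Lemma~\ref{separat} applies (its hypotheses require a basic set meeting and leaving a subgroup with a containment between $\langle X\cap H\rangle$ and $\rad(X\setminus H)$), nor how the hypothesis $K\in\Sup_2^{\min}(G_\r)$ enters beyond a vague appeal to Lemma~\ref{minpring}. In the paper that hypothesis is used concretely, via Lemma~\ref{minpring}, to pin down $\mathcal{A}_{G/C}$, $\mathcal{A}_{G/H}$, and $\mathcal{A}_{G/U}$ as $p$- or $2$-$S$-rings at several decisive points. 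As written, the proposal proves the theorem only in the one case that the paper dispatches first, and asserts without proof that the other cases do not arise.
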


We divide the proof of Theorem~\ref{main2} into cases depending on whether $C$ or $H$ is an $\mathcal{A}$-subgroup.

\begin{prop}\label{proof1}
In the conditions of Theorem~\ref{main2}, suppose that $C$ and $H$ are $\mathcal{A}$-subgroups. Then Statement~2 of Theorem~\ref{main2} holds for $\mathcal{A}$.
\end{prop}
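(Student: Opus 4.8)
The setup is that $\mathcal{A}=V(K,G)$ with $K\in\Sup_2^{\min}(G_\r)$, $G=C\times H$ with $C\cong C_4$, $H\cong C_p^2$, $p$ odd, and both $C$ and $H$ are $\mathcal{A}$-subgroups. By Lemma~\ref{tenspr}(1) applied to the decomposition $G=C\times H$, every basic set $X$ has $X_C\in\mathcal{S}(\mathcal{A})$ and $X_H\in\mathcal{S}(\mathcal{A})$, and by Lemma~\ref{tenspr}(2) we have $\mathcal{A}\geq \mathcal{A}_C\otimes\mathcal{A}_H$ with equality as soon as $\mathcal{A}_C=\mathbb{Z}C$ or $\mathcal{A}_H=\mathbb{Z}H$. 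So the natural first move is: if either factor carries the full group ring, we are immediately in Statement~2 (the tensor product of $\mathcal{A}_C$ and $\mathcal{A}_H$ over the proper subgroups $C$ and $H$). Hence I would reduce at once to the case $\mathcal{A}_C\neq\mathbb{Z}C$ and $\mathcal{A}_H\neq\mathbb{Z}H$.

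By Lemma~\ref{p2}, $\mathcal{A}_C\neq\mathbb{Z}C$ forces $\mathcal{A}_C\cong\mathbb{Z}C_2\wr\mathbb{Z}C_2$, so $C_1=\langle c^2\rangle$ is an $\mathcal{A}$-subgroup and the nontrivial basic set of $\mathcal{A}_C$ outside $C_1$ is the coset $cC_1=\{c,c^3\}$; in particular $\mathcal{A}_C$ is cyclotomic, $\mathcal{A}_C=\cyc(K_C,C)$ with $|K_C|=2$. Similarly $\mathcal{A}_H\neq\mathbb{Z}H$ forces $\mathcal{A}_H\cong\mathbb{Z}C_p\wr\mathbb{Z}C_p$ (again by Lemma~\ref{p2}), so there is an $\mathcal{A}$-subgroup $E\cong C_p$ with $\mathcal{A}_H=\mathcal{A}_E\wr\mathbb{Z}(H/E)$, $\mathcal{A}_E=\mathbb{Z}E$, and the basic sets of $\mathcal{A}_H$ outside $E$ are the $E$-cosets. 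Now I want to combine these two wreath structures. The key observation is that $C_1$ is an $\mathcal{A}$-subgroup with $G/(C_1\times E)$ of order $2p$, and $C_1\times E$ is normal; I would try to show $\mathcal{A}$ is the $(C_1\times E)$-related generalized wreath product, or more precisely produce an $\mathcal{A}$-section $S=U/L$ with $L\trianglelefteq G$, $L\leq\rad(X)$ for every basic $X$ outside $U$, and verify that $S$ satisfies one of $(\CA)$–$(\CAAA)$. The cleanest candidate: since $\mathcal{A}\geq\mathcal{A}_C\otimes\mathcal{A}_H$, every basic set not contained in $C_1\times H$ projects onto $cC_1$ in the $C$-coordinate, hence is a union of $C_1$-cosets, i.e. $C_1\leq\rad(X)$; symmetrically every basic set not contained in $C\times E$ has $E\leq\rad(X)$. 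That should let me exhibit $\mathcal{A}$ as a generalized wreath product over one of these sections.

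**Main steps, in order.** (i) Dispose of the case $\mathcal{A}_C=\mathbb{Z}C$ or $\mathcal{A}_H=\mathbb{Z}H$ via Lemma~\ref{tenspr}(2) — Statement~2 holds directly. (ii) In the remaining case, use Lemma~\ref{p2} to pin down $\mathcal{A}_C\cong\mathbb{Z}C_2\wr\mathbb{Z}C_2$ and $\mathcal{A}_H\cong\mathbb{Z}C_p\wr\mathbb{Z}C_p$, extracting the $\mathcal{A}$-subgroups $C_1$ and $E$. (iii) Show that $C_1\times E$ (or a suitable smaller section built from it) is an $\mathcal{A}$-section whose defining radical condition holds — this uses $\mathcal{A}\geq\mathcal{A}_C\otimes\mathcal{A}_H$ to see that basic sets outside $C\times E$ are unions of $E$-cosets and basic sets outside $C_1\times H$ are unions of $C_1$-cosets. (iv) Identify $\mathcal{A}$ as the nontrivial $S$-wreath product for $S=U/L$ with, say, $U=C_1\times H$ and $L=E$ (so $U/L\cong C_1\times(H/E)\cong C_2\times C_p$), or whichever choice makes the section abelian of the needed shape; then check which of $(\CA)$–$(\CAAA)$ is satisfied. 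Given the structure, I expect $\mathcal{A}_S$ to be a tensor/wreath of $\mathbb{Z}C_2$ with an $S$-ring over $C_p$, and to land in $(\CAA)$ (cyclotomic restriction, Cayley-minimal section, via Lemma~\ref{circcaymin}) or $(\CA)$; the condition $(\CAAA)$ is tailored to $U\cong C_{4p}$, $L\cong C_2$, so it may be the relevant one if instead the $4$-part survives in $U$. I would work out which section makes $\mathcal{A}$ a generalized wreath product cleanly and then match against the list.

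**The main obstacle.** The delicate point is (iii)–(iv): merely knowing the projections $\mathcal{A}_C$ and $\mathcal{A}_H$ and that $\mathcal{A}\geq\mathcal{A}_C\otimes\mathcal{A}_H$ does not by itself force $\mathcal{A}$ to be a generalized wreath product — there could be "mixed" basic sets spanning both factors in a way that still has the right radicals but for which the wreath decomposition needs the precise section to be chosen correctly, and I must make sure the chosen $L$ is normal (automatic here, $G$ abelian) and genuinely contained in $\rad(X)$ for every basic $X$ outside $U$, not just for those missing one coordinate. The bookkeeping of exactly which basic sets lie where — using Lemma~\ref{intersection} and the $\mathcal{A}$-subgroup structure — and then reading off which of the four conditions $(\CA)$–$(\CAAA)$ holds (in particular distinguishing whether the surviving $2$-part in $U$ is $C$ or $C_1$, which toggles between $(\CAAA)$ and the others) is where the real work lies. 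I do not expect to need the $\Sup_2^{\min}$ hypothesis heavily here beyond what is already packaged into Lemma~\ref{p2}; its main role is presumably in the companion propositions where $C$ or $H$ fails to be an $\mathcal{A}$-subgroup.
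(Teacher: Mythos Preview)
Your plan goes off the rails at step (iii). The proposition asserts that \emph{Statement~2} of Theorem~\ref{main2} holds --- that is, $\mathcal{A}$ is a tensor product --- not Statement~3. In the residual case you are trying to exhibit $\mathcal{A}$ as a nontrivial generalized wreath product and then match it against $(\CA)$--$(\CAAA)$; that is the wrong target.

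More seriously, the mechanism you propose for getting there is incorrect. You write: ``every basic set not contained in $C_1\times H$ projects onto $cC_1$ in the $C$-coordinate, hence is a union of $C_1$-cosets, i.e.\ $C_1\leq\rad(X)$.'' The first clause is fine (since $X_C\in\mathcal{S}(\mathcal{A}_C)$ and the only basic set of $\mathcal{A}_C$ outside $C_1$ is $\{c,c^{-1}\}=cC_1$), but the ``hence'' is a non sequitur. Knowing $X_C=cC_1$ only says that both fibres $X\cap cH$ and $X\cap c^{-1}H$ are nonempty; it does \emph{not} say they are the same subset of $H$ translated by $c_1$, which is what $C_1\leq\rad(X)$ requires. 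A priori one could have $X=\{ch_1,c^{-1}h_2\}$ with $h_1\neq h_2$. The same objection applies to your symmetric claim about $E\leq\rad(X)$.

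What the paper does instead is prove that in the residual case (both $\mathcal{A}_C$ and $\mathcal{A}_H$ proper wreath products) every basic set $X$ outside $C\cup H$ still satisfies $X=X_C\times X_H$, so that $\mathcal{A}=\mathcal{A}_C\otimes\mathcal{A}_H$ and Statement~2 holds after all. The argument is a short counting trick using Lemma~\ref{intersection}: when $|X_C|>1$ and $|X_H|>1$ one has $X_C=\{c,c^{-1}\}$ and $|X_H|=p$; the intersection number $\lambda=|Cx\cap X|$ is constant on $X$, and if $\lambda=1$ then the $c$- and $c^{-1}$-fibres are disjoint subsets of $X_H$ of equal size $\mu$, forcing $2\mu=|X_H|=p$, impossible for $p$ odd. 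Hence $\lambda=2$ and $X=X_C\times X_H$. This parity obstruction is the one idea you are missing.

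A minor side point: Lemma~\ref{p2} classifies $p$-$S$-rings over $C_4$ and $C_p^2$, not arbitrary $S$-rings. The hypothesis $K\in\Sup_2^{\min}(G_\r)$ enters through Lemma~\ref{minpring}, which is what guarantees that $\mathcal{A}_C$ is a $2$-$S$-ring and $\mathcal{A}_H$ is a $p$-$S$-ring in the first place.
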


\begin{proof}
The $S$-ring $\mathcal{A}_C$ is a $2$-$S$-ring and $\mathcal{A}_H$ is a $p$-$S$-ring by Lemma~\ref{minpring}. Lemma~\ref{p2} implies that each of the $S$-rings $\mathcal{A}_C$ and $\mathcal{A}_H$ coincides with the group ring or isomorphic to the wreath product of two group rings. If at least one of the $S$-rings $\mathcal{A}_C$ and $\mathcal{A}_H$ coincides with the group ring then $\mathcal{A}=\mathcal{A}_C \otimes \mathcal{A}_H$ by Statement~2 of Lemma~\ref{tenspr}. So we may assume that 
$$\mathcal{A}_C=\mathbb{Z}C_1\wr \mathbb{Z}(C/C_1)~\text{and}~\mathcal{A}_H=\mathbb{Z}A\wr \mathbb{Z}(H/A)~\eqno(7)$$ 
for some $\mathcal{A}_H$-subgroup $A$ of order~$p$. 

Let $X\in \mathcal{S}(\mathcal{A})$ outside $C$ and $H$. Statement~1 of Lemma~\ref{tenspr} implies that $X_C,X_H\in \mathcal{S}(\mathcal{A})$. If $|X_C|=1$ or $|X_H|=1$ then $X=X_C\times X_H$. If $|X_C|>1$ and $|X_H|>1$ then $X_C=\{c,c^{-1}\}$ and $|X_H|=p$ by Eq.~(7). The numbers $\lambda=|Cx\cap X|$ and $\mu=|Hx\cap X|$ does not depend on $x\in X$ by Lemma~\ref{intersection}. If $\lambda=2$ then $X=X_C\times X_H$. If $\lambda=1$ then $X=X_1c\cup X_2c^{-1}$, where $X_1,X_2\subseteq H$, $X_1\cap X_2=\varnothing$, and $|X_1|=|X_2|=\mu$. In this case we obtain that
$$2\mu=|X_1|+|X_2|=|X_H|=p.$$
However, this contradicts to the fact that $p$ is prime. Therefore, in any case $X=X_C \times X_H$. So $\mathcal{A}=\mathcal{A}_C \otimes \mathcal{A}_H$ and Statement~2 of Theorem~\ref{main2} holds for $\mathcal{A}$. The proposition is proved.
\end{proof}

\begin{prop}\label{proof2}
In the conditions of  Theorem~\ref{main2}, suppose that $C$ is an $\mathcal{A}$-subgroup whereas $H$ is not an $\mathcal{A}$-subgroup. Then Statement~3 of Theorem~\ref{main2} holds for $\mathcal{A}$.
\end{prop}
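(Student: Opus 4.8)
The plan is to exploit the full arsenal of multiplier and Schur-theoretic tools from Section~2 together with the structure of $\mathcal{A}_C$ (which, being a $2$-$S$-ring over $C_4$ by Lemma~\ref{minpring} and Lemma~\ref{p2}, is either $\mathbb{Z}C$ or $\mathbb{Z}C_1\wr\mathbb{Z}(C/C_1)$). First I would record the $\mathcal{A}$-subgroups available: $C$, $C_1$, and $e$ are $\mathcal{A}$-subgroups, while $H$ is not; since intersections of $\mathcal{A}$-subgroups are $\mathcal{A}$-subgroups and $\langle X\rangle$, $\rad(X)$ are $\mathcal{A}$-subgroups for every $\mathcal{A}$-set $X$, the lattice of $\mathcal{A}$-subgroups is quite constrained. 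In particular, for any $\mathcal{A}$-subgroup $D$ with $D\cap H$ a proper nontrivial subgroup of $H$, the group $D\cap H$ has order $p$. The key players will be the $\mathcal{A}$-subgroup $P=O_p(\mathcal{A})$ generated by all $\mathcal{A}$-sets of $p$-power order inside $H$ (via Lemma~\ref{sch} applied with the prime $p$), and the quotient structure modulo $C_1$ and modulo subgroups of order $p$.

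Next I would analyze the basic sets meeting $H$ nontrivially. Applying the Schur multiplier theorem (Lemma~\ref{burn}) with $m=-1$ and with $m\equiv 1\pmod 4$, $m\equiv$ (primitive root)$\pmod{p}$ and similar choices, one controls how a basic set $X$ projects to $C$ and to $G/C_1\cong C_2\times H$. The aim is to show that there is a proper $\mathcal{A}$-subgroup $U$ and a nontrivial $\mathcal{A}$-subgroup $L\trianglelefteq G$ with $L\leq U$ such that every basic set outside $U$ has $L$ in its radical — i.e. $\mathcal{A}$ is a nontrivial $S=U/L$-wreath product. The natural candidates are $L=C_1$ (so $S=U/C_1$) when the $p$-part is "spread out", or $L$ of order $p$ inside $H$ with $U=C\times L$ when the $C_4$-direction is "attached" to a single line; the case split will likely be governed by whether $\mathcal{A}_{G/C_1}$ (a genuine $S$-ring over an $\mathcal{E}$-group $C_2\times C_p^2$) forces a line to be an $\mathcal{A}_{G/C_1}$-subgroup, which one can pull back. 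Here I would lean on Lemma~\ref{separat} to convert "$X$ meets a subgroup and its complement with the inside generating a subgroup of the radical of the outside" into the statement that $X$ is a coset-difference, yielding the radical containment needed for the generalized wreath decomposition.

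Finally, once the $S$-wreath decomposition $\mathcal{A}=\mathcal{A}_U\wr_S\mathcal{A}_{G/L}$ is established, I would verify that the section $S=U/L$ satisfies one of $(\CA)$–$(\CAAA)$. If $L=C_1$ then $U/L$ is a section of $C_2\times C_p^2\in\mathcal{E}$, and the possibilities for $\mathcal{A}_S$ are very limited (rank $2$, cyclotomic, tensor, or wreath on this small $\mathcal{E}$-group); one shows either $\mathcal{A}_S=\mathbb{Z}S$ (condition $(\CA)$) or $\mathcal{A}_S$ is cyclotomic hence Cayley minimal by Lemma~\ref{circcaymin}-type reasoning with $\mathcal{A}_U$ or $\mathcal{A}_{G/L}$ cyclotomic via Lemma~\ref{p2} (condition $(\CAA)$). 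If instead $L$ has order $p$ and $U=C\times L\cong C_{4p}$, then $\mathcal{A}_U$ is an $S$-ring over $C_{4p}$ whose restriction to the $C_4$-part and to $L$ can be pinned down; the goal is to show $\mathcal{A}_U$ has exactly the shape $(\mathbb{Z}L\otimes\mathcal{A}_V)\wr_{(V\times L)/V}\mathbb{Z}(U/V)$ demanded by $(\CAAA)$, using Lemma~\ref{circ} on the cyclic group $C_{4p}$ and the fact that $C$, $C_1$, $L$ are $\mathcal{A}_U$-subgroups while the full group is not forced to split as a tensor product in the relevant way. I expect the main obstacle to be this last bookkeeping: ruling out the "mixed" basic sets that would simultaneously fan out in the $C_4$-direction and the $C_p^2$-direction (the analogue of the $2\mu=p$ contradiction in Proposition~\ref{proof1} is not available here since $H$ is not an $\mathcal{A}$-subgroup), so the argument must instead use minimality of $K$ in $\Sup_2^{\min}(G_\r)$ together with Lemma~\ref{minpring} applied to suitable $p$-quotients to exclude these configurations before the wreath decomposition can be read off.
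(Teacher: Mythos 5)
Your high-level architecture is right --- find a proper $\mathcal{A}$-subgroup $U$ and a normal $\mathcal{A}$-subgroup $L\leq U$ with $L\leq\rad(Y)$ for every basic set $Y$ outside $U$, then check one of $(\CA)$--$(\CAAA)$ --- but the proposal leaves the central step unsubstantiated and names the wrong tools for it. The engine of the actual argument is Lemma~\ref{intersection} combined with Lemma~\ref{sch} applied with the prime $2$: for a basic set $X$ outside $C$ one considers the constant $\lambda=|Cx\cap X|$ and the image $X^{\pi}$ in $G/C$, which has size $1$ or $p$ because $\mathcal{A}_{G/C}$ is a $p$-$S$-ring by Lemma~\ref{minpring}. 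When $|X^{\pi}|=p$ and $C_1\nleq\rad(X)$, the $\mathcal{A}$-sets $X^{[2]}$ and $(X^{[2]})^{[2]}$ either land inside $H$ and generate it, or produce a second $\mathcal{A}$-set of the same size meeting $X$ nontrivially --- both contradictions with the hypothesis that $H$ is not an $\mathcal{A}$-subgroup. This is what forces $C_1\leq\rad(X)$ for every basic set spreading over $p$ cosets of $C$. Neither Lemma~\ref{burn} with cleverly chosen multipliers nor Lemma~\ref{separat} (on which you say you would lean) accomplishes this, and your appeal to ``minimality of $K$'' is not the mechanism either; without this squaring argument the wreath decomposition cannot be read off.

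The remaining structure is then simpler than you anticipate, and your proposed case split is off target. A basic set $X$ with $C_1\nleq\rad(X)$ must satisfy $|X^{\pi}|=1$, so $U=\langle C,X\rangle$ is cyclic of order $4p$, and Lemma~\ref{circ} together with Lemma~\ref{tenspr} forces $\mathcal{A}_U=\mathcal{A}_C\otimes\mathbb{Z}A$ for an $\mathcal{A}$-subgroup $A$ of order $p$; moreover at most one such $U$ can occur, since two of them would make $H=A\times B$ an $\mathcal{A}$-subgroup. Hence $\mathcal{A}$ is always the $U/C_1$-wreath product (with $U=C$ when no such $X$ exists), and $\mathcal{A}_{U/C_1}=\mathbb{Z}(U/C_1)$, so only condition $(\CA)$ is ever needed here. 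Your branch with $L$ of order $p$, $U=C\times L$ and condition $(\CAAA)$ does not arise in this proposition --- it belongs to the case where $C$ is \emph{not} an $\mathcal{A}$-subgroup --- so pursuing it would lead you away from the proof rather than toward it.
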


\begin{proof}
The $S$-ring $\mathcal{A}_{G/C}$ is a $p$-$S$-ring by Lemma~\ref{minpring}. So $\mathcal{A}_{G/C}=\mathbb{Z}(G/C)$ or $\mathcal{A}_{G/C}\cong \mathbb{Z}C_p\wr \mathbb{Z}C_p$. Denote the canonical epimorphism from $G$ to $G/C$ by~$\pi$. Let $X\in\mathcal{S}(\mathcal{A})$ outside $C$. Then $|X^{\pi}|\in\{1,p\}$.

Suppose that $|X^{\pi}|=1$. Then $U=\langle C, X\rangle$ is a cyclic group of order~$4p$. So one of the statements~2-4 of Lemma~\ref{circ} holds for $\mathcal{A}_U$. If Statement~2 or Statement~3 of Lemma~\ref{circ} holds for $\mathcal{A}_U$ then there exists an $\mathcal{A}_U$-subgroup $A$ of order~$p$. Since $|X^{\pi}|=1$, we conclude that $\mathcal{A}_A=\mathbb{Z}A$. Statement~2 of Lemma~\ref{tenspr} yields that
$$\mathcal{A}_U=\mathcal{A}_C\otimes \mathbb{Z}A~\eqno(8).$$
If Statement~4 of Lemma~\ref{circ} holds for $\mathcal{A}_U$ then each basic set of $\mathcal{A}_U$ outside $C$ is a union of some cosets by a subgroup of $C$. Therefore in this case
$$C_1\leq \rad(X).~\eqno(9)$$

Now suppose that $|X^{\pi}|=p$. The number $\lambda=|Cx\cap X|$ does not depend on $x\in X$. Clearly, $\lambda\in\{1,2,3,4\}$. If $\lambda=4$ then $C\leq \rad(X)$ and hence Eq.~(9) holds for $X$. Put $Y=(X^{[2]})^{[2]}$. If $\lambda\in\{1,3\}$ then $Y=\{x^4:~x\in X\}$. Clearly, $Y\subseteq H$. Since $|X^{\pi}|=p$, we have $|Y|=p$. From Lemma~\ref{sch} it follows that $Y$ is an $\mathcal{A}$-set. So $H=\langle Y \rangle$ is an $\mathcal{A}$-subgroup, a contradiction with the assumption of the proposition. Let $\lambda=2$. Then Eq.~(9) holds for $X$ or for every $x\in X$ the set $Cx\cap X$ contains one element from $C_1$ and one element from $C\setminus C_1$. Put $Z=X^{[2]}$. In the latter case $Z=X_HC_1$. The set $Z$ is an $\mathcal{A}$-set by Lemma~\ref{sch}. Note that $|Z|=|X|$, $Z\cap X\neq \varnothing$, and $Z\neq X$, a contradiction. Thus, we proved that if $|X^{\pi}|=p$ then Eq.~(9) holds for $X$.

If Eq.~(9) holds for every basic set of $\mathcal{A}$ outside $C$ then $\mathcal{A}$ is the $S=C/C_1$-wreath product. Since $|S|=2$, $\mathcal{A}$ and $S$ satisfy Condition~$(\CA)$. So Statement~3 of Theorem~\ref{main2} holds for $\mathcal{A}$. Suppose that there exist $X\in\mathcal{S}(\mathcal{A})$ such that $C_1\nleq \rad(X)$. Then there  exists an $\mathcal{A}$-subgroup $A$  of order $p$ inside $U=\langle X, C \rangle$ such that Eq.~(8) holds for $U$ and $A$. If there exists $T\in \mathcal{S}(\mathcal{A})$ outside $U$ with $C_1\nleq \rad(T)$ then there  exists an $\mathcal{A}$-subgroup $B$  of order $p$ inside $V=\langle T, C \rangle$ such that Eq.~(8) holds for $V$ and $B$. However, this implies that $H=A\times B$ is an $\mathcal{A}$-subgroup, a contradiction with the assumption of the proposition. Therefore Eq.~(9) holds for every basic set of $\mathcal{A}$ outside $U$ and hence $\mathcal{A}$ is the $S=U/C_1$-wreath product. Due to Eq.~(8), we obtain that $\mathcal{A}$ and $S$ satisfy Condition~$(\CA)$. So Statement~3 of Theorem~\ref{main2} holds for $\mathcal{A}$. The proposition is proved.
\end{proof}

\begin{prop}\label{proof3}
In the conditions of  Theorem~\ref{main2}, suppose that $C$ is not an $\mathcal{A}$-subgroup and $X$ is a basic set of $\mathcal{A}$ containing an element of order~$4$. Then one of the following statements holds for $X$:

$(1)$ $X$ is an $L$-coset for some $\mathcal{A}$-subgroup $L$ with $|L|\in\{p,2p,p^2,2p^2\}$;

$(2)$ $X=\langle X \rangle \setminus \rad(X)$. In this case $\langle X \rangle=G$ or $|\rad(X)|\in\{1,2,p\}$.
\end{prop}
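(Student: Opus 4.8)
Since $|H|$ is odd, the only elements of order $4$ in $G$ are $c$ and $c^{-1}$; by Lemma~\ref{burn} the set $X^{-1}$ is again a basic set, and $X$ satisfies the conclusion of the proposition if and only if $X^{-1}$ does (because $\langle\cdot\rangle$, $\rad(\cdot)$ and the property of being a coset are all inversion-invariant), so the plan is to assume $c\in X$ from the outset. Then $U:=\langle X\rangle$ is an $\mathcal{A}$-subgroup with $C\leq U$, and $U\neq C$ because $C$ is not an $\mathcal{A}$-subgroup; hence $U=C\times A$ with $\syl_p(U)=A\neq\{e\}$, so $|A|\in\{p,p^2\}$ and $U$ is cyclic of order $4p$ when $|A|=p$, while $U=G$ when $|A|=p^2$. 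The first observation I would record is that $\rad(X)$ contains no element of order $4$: if it did it would contain $c$, and then $\rad(X)=c\cdot\rad(X)\subseteq X$, forcing $e\in X$, which is absurd. Consequently $\rad(X)\leq C_1\times A$.

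Write $L=\rad(X)$. If $X$ happens to be a single $L$-coset, then $X=cL$ and $U=\langle X\rangle=\langle c\rangle L=C\,(L\cap A)$, which together with $U=C\times A$ forces $A\leq L$; since also $L\leq C_1\times A$, we get $L\in\{A,\,C_1\times A\}$, so $|L|\in\{p,2p,p^2,2p^2\}$ and statement~(1) holds. So in the remainder one may assume $|X|>|L|$. Passing to $\overline{U}:=U/L$, the image $\overline{X}$ of $X$ is a basic set of $\mathcal{A}_{\overline{U}}$ with $\langle\overline{X}\rangle=\overline{U}$, $\rad(\overline{X})=\{\overline{e}\}$ and $|\overline{X}|\geq 2$, and $\overline{X}$ contains $\overline{c}:=cL$, which generates the Sylow $2$-subgroup $\overline{U}_2$ of $\overline{U}$ (of order $2$ or $4$). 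The goal is now to prove $\overline{X}=\overline{U}^{\#}$, for then $X=U\setminus L=\langle X\rangle\setminus\rad(X)$; and the size constraint in statement~(2) is then easily checked, since if $|A|=p^2$ then $\langle X\rangle=G$, while if $|A|=p$ the value $|L|=2p$ is impossible (it would make $\overline{U}\cong C_2$ and $|\overline{X}|=1$), so $|L|\in\{1,2,p\}$.

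Suppose first that $\overline{U}$ is cyclic (this covers every case with $|A|=p$, and those with $|A|=p^2$ in which $L\cap H$ has order $\geq p$). Then Lemma~\ref{circ} applies to $\mathcal{A}_{\overline{U}}$, and I would rule out three of its four alternatives. The cyclotomic and tensor-product cases are excluded because $\overline{c}$ lies in $\overline{U}_2$: an automorphism orbit of $\overline{c}$, respectively the basic set of a tensor decomposition containing $\overline{c}$, stays inside $\overline{U}_2$ (the automorphism group and the direct factors of a finite abelian group respect the primary decomposition), so $\langle\overline{X}\rangle\leq\overline{U}_2$; this contradicts $\langle\overline{X}\rangle=\overline{U}$ unless $\overline{U}=\overline{U}_2\in\{C_2,C_4\}$, and in that case an orbit (or tensor factor) of $\overline{c}$ that has trivial radical and at least two elements simply cannot occur. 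A nontrivial generalized wreath product $\mathcal{A}_{\overline{U}'}\wr_{\overline{S}}\mathcal{A}_{\overline{U}/\overline{L}'}$ is excluded because $\overline{X}$, generating $\overline{U}$, cannot lie in $\overline{U}'$, whence $\overline{L}'\leq\rad(\overline{X})=\{\overline{e}\}$, contradicting nontriviality. Hence $\rk(\mathcal{A}_{\overline{U}})=2$, so $\overline{X}=\overline{U}^{\#}$, as wanted.

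It remains to treat the case in which $\overline{U}$ is not cyclic; this forces $|A|=p^2$, $U=G$ and $L=\rad(X)\in\{\{e\},C_1\}$, so $\overline{U}\cong C_4\times C_p^2$ or $\overline{U}\cong C_2\times C_p^2$, and this is the main obstacle. Here the plan is to use the Schur multiplier maps of Lemma~\ref{sch} inside $\mathcal{A}_{\overline{U}}$. The set $\overline{X}^{[p]}$ is contained in the set of $p$th powers of $\overline{U}$, which is exactly $\overline{U}_2$; thus $\langle\overline{X}^{[p]}\rangle$ is an $\mathcal{A}_{\overline{U}}$-subgroup of $\overline{U}_2$ whose full preimage in $G$ is contained in $C$, and since $C$ is not an $\mathcal{A}$-subgroup this preimage cannot be all of $C$, so $\overline{X}^{[p]}$ lies in the subgroup of order $\leq 2$ of $\overline{U}_2$. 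Evaluating the defining condition of $\overline{X}^{[p]}$ at the elements of $\overline{X}$ of $2$-power order then yields strong divisibility constraints on $|\overline{X}\cap\overline{H}\,\overline{x}|$, where $\overline{H}$ is the Sylow $p$-subgroup of $\overline{U}$; combining these with the analogous analysis of $\overline{X}^{[2]}$ and with Lemma~\ref{intersection} applied to the $\mathcal{A}_{\overline{U}}$-subgroup $\langle\overline{X}^{[2]}\rangle$, one forces every $\overline{H}$-coset meeting $\overline{X}$ to be contained in $\overline{X}$ and every $2$-power-order element of $\overline{U}\setminus\overline{H}$ to lie in $\overline{X}$, giving $\overline{X}=\overline{U}^{\#}$ and hence statement~(2) with $\langle X\rangle=G$. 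The delicate part, and the place where the hypothesis $K\in\Sup_2^{\min}(G_\r)$ (rather than that of an arbitrary $S$-ring) is genuinely needed, is the bookkeeping of the interaction between the $2$-part and the $p$-part of $\overline{U}$ in the subcase where $H$ is not itself an $\mathcal{A}$-subgroup.
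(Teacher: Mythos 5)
Your reduction to the quotient $\overline{U}=\langle X\rangle/\rad(X)$ is a genuinely different organization from the paper's (which splits on $X\cap(C_1\times H)$ and on $|H\cap Xc^{-1}|$), and the parts of it that you carry out are correct: the single-coset case does give statement~(1) with $L=\rad(X)\supseteq\syl_p(\langle X\rangle)$, and when $\overline{U}$ is cyclic your elimination of alternatives (2)--(4) of Lemma~\ref{circ} (orbits and tensor factors of $\overline{c}$ stay in the Sylow $2$-subgroup; a nontrivial generalized wreath product is incompatible with $\rad(\overline{X})=\{\overline{e}\}$ and $\langle\overline{X}\rangle=\overline{U}$) is sound and arguably cleaner than the paper's case analysis of the sets $X_0$, $X_1$, $T$ for those subcases. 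This is the same device the paper itself uses once, inside Case~2 of its first sub-lemma, but you apply it systematically.

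The gap is the non-cyclic case, i.e.\ $\langle X\rangle=G$ and $\rad(X)\leq C_1$, and it is not a small one: this is where the substantive work of the proposition lives, and your treatment of it is a sketch whose pivotal intermediate claim cannot be established because it is inconsistent with your own setup. You assert that one "forces every $\overline{H}$-coset meeting $\overline{X}$ to be contained in $\overline{X}$"; applied to the coset $\overline{H}$ itself this gives $\overline{H}\leq\rad(\overline{X})=\{\overline{e}\}$, which is absurd, and if you instead restrict the claim to cosets outside $\overline{H}$ you obtain only $\overline{U}\setminus\overline{H}\subseteq\overline{X}$, which still leaves the crucial step $\overline{H}^{\#}\subseteq\overline{X}$ unproved (and is itself not justified by anything you write: why, for instance, must $\overline{c}^{-1}$ and $\overline{c}^{2}$ lie in $\overline{X}$ when $\rad(X)=\{e\}$?). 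What the paper actually does here is establish, via the rationality argument of Lemma~\ref{burn} (choosing $m\equiv 1\pmod 4$, $m\equiv k\pmod p$), that $X=X^{-1}$ and $X=Lc\cup Lc^{-1}\cup X_1c_1\cup X_0$ with $X_1$ rational, then analyze the $\mathcal{A}$-subgroup $\langle X^{[2]}\rangle$ (using Lemma~\ref{minpring}, i.e.\ the hypothesis $K\in\Sup_2^{\min}(G_\r)$, to exclude $\langle X^{[2]}\rangle=H$) and finally invoke Lemma~\ref{separat} with a suitable subgroup ($H$ or $C_1$) to conclude $X=\langle X\rangle\setminus\rad(X)$. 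None of this machinery appears in your sketch, and your closing sentence concedes that the "delicate part" is left as bookkeeping. As written, the proposal proves the proposition only when $\langle X\rangle/\rad(X)$ is cyclic.
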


\begin{proof}
Without loss of generality, let $c\in X$. Put $L=H\cap Xc^{-1}$. Clearly, $|L|\leq |H|=p^2$. If $|L|\notin\{p,p^2\}$ then $Y=X^{[p]}$ is a subset of $C$ containing $c^p$. Lemma~\ref{sch} implies that $Y$ is an $\mathcal{A}$-set. So $C=\langle Y \rangle$ is an $\mathcal{A}$-subgroup, a contradiction with the assumption of the proposition. Therefore
$$|L|\in\{p,p^2\}.$$

Let us show that $L$ is a subgroup of $H$. If $|L|=p^2$ then $L=H$ and we are done. Let $|L|=p$. For every $k\in \mathbb{Z}$ coprime to $p$ there exists $m\in \mathbb{Z}$ such that $m\equiv 1~\mod~4$ and $m \equiv k~\mod p$. The set $X^{(m)}$ is a basic set by Lema~\ref{burn} and $c^m=c\in X^{(m)}\cap X$. So $X^{(m)}=X$ and hence $L=L^{(k)}$. Since $|L|=p$ and $e\in L$, we conclude that $L$ is a subgroup of $H$.

Suppose that  $X\cap (C_1\times H)=\varnothing$. Then $X=Lc$ whenever $X\neq X^{-1}$ and $X=Lc\cup Lc^{-1}=(C_1\times L)c$ whenever $X=X^{-1}$. So Statement~1 of the proposition holds for $X$.

Now suppose that $X\cap (C_1\times H)\neq \varnothing$. There exists $k\in \mathbb{Z}$ such that $k\equiv 1~\mod~4$ and $k\equiv (-1)~\mod~p$. Note that $c^k=c\in X^{(k)}\cap X$. Since $X^{(k)}\in \mathcal{S}(\mathcal{A})$ by Lemma~\ref{burn}, we conclude that $X^{(k)}=X$. This yields that $x^{-1}=x^k\in X$ for every $x\in X\cap (C_1\times H)$. So $X=X^{-1}$. Therefore $X$ is of the form
$$X=Lc \cup Lc^{-1} \cup X_1c_1 \cup X_0~\eqno(10)$$
for some $X_0,X_1\subseteq H$ with $X_0\cup X_1\neq \varnothing$. Moreover, the set $X_1$ is rational. Indeed, for every $k\in \mathbb{Z}$ coprime to $p$ there exists $m\in \mathbb{Z}$ such that $m\equiv 1~\mod~4$ and $m \equiv k~\mod p$. Then $c=c^m\in X\cap X^{(m)}$. The set $X^{(m)}$ is basic by Lemma~\ref{burn} and hence $X^{(m)}=X$. So $(X_1c_1)^{(m)}=X_1^{(k)}c_1\subseteq X$. Therefore $X_1^{(k)}=X_1$ for every $k$ coprime to $p$.

If $|L|=p^2$ then all conditions of Lemma~\ref{separat} holds for $X$ and $C_0\times H$ and hence $X=\langle X \rangle \setminus \rad(X)=G\setminus \rad(X)$. So Statement~2 of the proposition holds for $X$. 

Let $|L|=p$. Denote the canonical epimorphism from $G$ to $G/\rad(X)$ by~$\pi$. Put $T=(X_0\setminus X_1) \cup (X_1\setminus X_0)$. We divide the rest of the proof into two lemmas deal with the cases $T\neq \varnothing$ and $T=\varnothing$.

\begin{lemm}
In the above notations, suppose that $T\neq \varnothing$. Then Statement~2 of Proposition~\ref{proof3} holds for $X$.
\end{lemm}

\begin{proof}
One can see that $x^2\in X^{[2]}$ for each $x\in T$. Since $|L|=p$, we obtain that $X^{[2]}=T^{(2)}\subseteq H$. In view of Lemma~\ref{sch}, the set $X^{[2]}$ is an $\mathcal{A}$-set and hence $A=\langle X^{[2]} \rangle$ is an $\mathcal{A}$-subgroup inside $H$. Clearly, 
$$\langle T \rangle \subseteq A.~\eqno(11)$$ 
If $X_0\setminus X_1\neq \varnothing$ then $T\cap X\neq \varnothing$. So $A\cap X\neq\varnothing$. This means that $X\subseteq A\leq H$, a contradiction with Eq.~(10). Thus, 
$$T=X_1\setminus X_0.~\eqno(12)$$

Note that $|A|\in \{1,p,p^2\}$ because $A\leq H$. Let us consider all possibilities for $|A|$.
\\
\\
\noindent\textbf{Case 1: $|A|=p^2$.} In this case $A=H$. Due to the condition of the lemma and Eq.~(12), we have $X_1\neq \varnothing$. So in view of Eq.~(10), $\rk(\mathcal{A}_{G/H})=2$. However, $\mathcal{A}_{G/H}$ is a $2$-$S$-ring by Lemma~\ref{minpring}, a contradiction. 
\\
\\
\noindent\textbf{Case 2: $|A|=p$.}  Due to rationality of $X_1$ and Eqs.~(11) and~(12), we conclude that $A^\#\subseteq X_1$. So $|Ac_1\cap X|\geq p-1$. From Lemma~\ref{intersection} it follows that that $|Ac\cap X|=|Ac_1\cap X|\geq p-1$. Therefore $A=L$ by Eq.~(10). This means that $L$ is an $\mathcal{A}$-subgroup. Lemma~\ref{intersection} implies that $|Lx\cap X|=p$ for every $x\in X$ and hence $L\leq \rad(X)$. The group $\rad(X)$ is a subgroup of $H$. Indeed, otherwise $c_1\in \rad(X)$ and hence $Xc_1=X$. This means that $X_1=X_0$ which contradicts to the assumption $T\neq \varnothing$. In view of Eq.~(10) and $\rad(X)\leq H$, we obtain that 
$$\rad(X)=L.~\eqno(13)$$ 

Assume that $X_1\cup X_0\nsubseteq L$. The set $X^{\pi}$, where $\pi:G\rightarrow G/\rad(X)$ is the canonical epimorphism, is a basic set of the $S$-ring $\mathcal{A}_{G/L}$ over a cyclic group $G/L$ of order~$4p$. The following properties hold for $X^{\pi}$: (1) $X^{\pi}\neq (G/L)^\#$; (2) the radical of $X^{\pi}$ is trivial; (3) $X^{\pi}$ contains elements $c^{\pi}$, $(c^{\pi})^{-1}$ of order~$4$ and elements from $(X_1c_1)^{\pi}$ of order~$2p$; (4) $\langle X ^{\pi}\rangle=G/L$. Properties~(2)-(4) of $X^{\pi}$ yield that Statement~2-4 of Lemma~\ref{circ} can not hold for $\mathcal{A}_{G/L}$ and hence Statement~1 of Lemma~\ref{circ} holds  for $\mathcal{A}_{G/L}$, i.e. $\rk(\mathcal{A}_{G/L})=2$. However, this contradicts to Property~(1) of $X^{\pi}$. Thus, $X_1\cup X_0\subseteq L$. Due to Eq.~(13), we conclude that $X_1=L$, $X_2=\varnothing$, and hence $X=Lc\cup Lc^{-1}\cup Lc_1=(C\times L)\setminus L$. So Statement~2 of the proposition holds for $X$.
\\
\\
\noindent\textbf{Case 3: $|A|=1$.} In this case $X_1=X_0\cup \{e\}$. All conditions of Lemma~\ref{separat} holds for $X$ and $C_1$ and hence $X=\langle X \rangle \setminus \rad(X)$ with $\rad(X)\leq C_1$. So Statement~2 of the proposition holds for $X$. The lemma is proved.
\end{proof}

\begin{lemm}
In the above notations, suppose that $T=\varnothing$. Then Statement~2 of Proposition~\ref{proof3} holds for $X$.
\end{lemm}

\begin{proof}
The condition of the lemma implies that $X_1=X_0$ and hence $C_1\leq \rad(X)$. It is easy to see that $\rad(X)\leq C_1\times H$. So due to Eq.~(10), we conclude that 
$$\rad(X)\in\{C_1, C_1\times L\}.$$
Let $\pi:G\rightarrow G/\rad(X)$ be the canonical epimorphism. If $\rad(X)=C_1\times L$ then $X^{\pi}$ is a basic set of an $S$-ring $\mathcal{A}_{G/(C_1\times L)}$ over a cyclic group of order~$2p$. Note  that $X^{\pi}$ contains the unique element $c^{\pi}$ from $H^{\pi}c^{\pi}$ and all elements from $X_0^{\pi}$ which are nontrivial elements from $H^{\pi}$. So neither $C^{\pi}$ nor $H^{\pi}$ is an $\mathcal{A}_{G/(C_1\times H)}$-subgroup. Therefore $\rk(\mathcal{A}_{G/(C_1\times H)})=2$ by Lemma~\ref{circ}. However, in this case $X^{\pi}$ must contain all elements from $H^{\pi}c^{\pi}$, a contradiction. Thus, $\rad(X)=C_1$.

Assume that $X_1\nsubseteq L$. It is easy to see that 
$$X^{\pi}=L^{\pi}c^{\pi}\cup X_1^{\pi}.~\eqno(14)$$ 
Since $X_1\nsubseteq L$, we have $\{(x^{\pi})^2:~x\in X_1\setminus L\}\subseteq (X^{\pi})^{[2]}$. The set $(X^{\pi})^{[2]}$ is an $\mathcal{A}_{G/C_1}$-set by Lemma~\ref{sch}. Clearly, $(X^{\pi})^{[2]}\subseteq H^{\pi}$. So $\langle (X^{\pi})^{[2]} \rangle$ is an $\mathcal{A}_{G/C_1}$-subgroup of $H^{\pi}$ containing elements from $X_1^{\pi}$. Therefore $X^{\pi}\subseteq \langle (X^{\pi})^{[2]} \rangle \leq H^{\pi}$, a contradiction with Eq.~(14). Thus, $X_1\subseteq L$. Since $X_1$ is rational, $X_1=L^\#$. This implies that $X=Lc\cup Lc^{-1} \cup L^\#c_1 \cup L^\#=(C\times L)\setminus C_1$ and Statement~2 of the proposition holds for $X$. The lemma is proved.
\end{proof}

We proved that in any case one of the statements of Proposition~\ref{proof3} holds for $X$. The proposition is proved.
\end{proof}

\begin{prop}\label{proof4}
In the conditions of  Theorem~\ref{main2}, suppose that $C$ is not an $\mathcal{A}$-subgroup. Then one of the statements of Theorem~\ref{main2} holds for $\mathcal{A}$.
\end{prop}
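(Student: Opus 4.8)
The plan is to combine Proposition~\ref{proof3} with the structure theory of $S$-rings over cyclic groups and $p$-groups. Since $C$ is not an $\mathcal{A}$-subgroup, the Schur multiplier results force some element of order~$4$ to lie in a basic set $X$ of $\mathcal{A}$; indeed, if every basic set met $C_1$ only in cosets of $C_1$ we could recover $C$ as an $\mathcal{A}$-subgroup, so I first isolate such an $X$ and apply Proposition~\ref{proof3} to it. If Statement~1 of Proposition~\ref{proof3} holds, then $X$ is an $L$-coset for an $\mathcal{A}$-subgroup $L$ with $|L|\in\{p,2p,p^2,2p^2\}$; I would then let $U=\langle X\rangle=\langle C,L\rangle$ or a suitable $\mathcal{A}$-subgroup containing $X$ and show $\mathcal{A}$ is the $U/L$-wreath product, using that $L\le\rad(Y)$ for basic sets $Y$ outside $U$ (which follows because the quotient $\mathcal{A}_{G/L}$ over a cyclic group of order dividing $4p$ is highly constrained by Lemma~\ref{circ}, and any basic set outside $U$ with $L\not\le\rad$ would, via Lemma~\ref{sch} and Lemma~\ref{separat}, generate $C$ or $H$). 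In each subcase one checks by hand that $\mathcal{A}_U$ has the shape required by one of $(\CA)$--$(\CAAA)$: when $|L|$ involves the full $H$ the relevant section has order dividing $4$ and Condition~$(\CA)$ applies; when $U\cong C_{4p}$ the $S$-ring $\mathcal{A}_U$ is cyclic, so Lemma~\ref{circ} and Lemma~\ref{circcaymin} give either rank~$2$ (hence $(\CA)$ or $(\CAA)$) or a wreath decomposition matching $(\CAAA)$.

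If instead Statement~2 of Proposition~\ref{proof3} holds for every basic set containing an element of order~$4$, then either some such $X$ has $\langle X\rangle=G$, or all of them satisfy $|\rad(X)|\in\{1,2,p\}$. In the first case $X=G\setminus\rad(X)$ with $\rad(X)$ small; here I would argue directly that $\mathcal{A}$ has rank~$2$ (Statement~1 of Theorem~\ref{main2}) unless there is extra structure forcing a wreath or tensor decomposition — concretely, the presence of a basic set equal to $G$ minus a proper subgroup, combined with rationality constraints from Lemma~\ref{burn} and the $p$-$S$-ring property of $\mathcal{A}_{G/C_1}$ (Lemma~\ref{minpring}), pins down the remaining basic sets. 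In the second case, set $D=\bigcap\rad(X)$ over the relevant $X$; since each $\rad(X)$ is an $\mathcal{A}$-subgroup and lies in $\{1,2,p\}$-order subgroups, $D$ is an $\mathcal{A}$-subgroup and I would take $L=D$ (or $C_1$ when $D=\{e\}$ but all radicals contain $C_1$) and $U=\langle C,\text{(the }p\text{-part)}\rangle$, then verify $\mathcal{A}=\mathcal{A}_U\wr_S\mathcal{A}_{G/L}$ with $S$ of the required form.

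The main obstacle I anticipate is the bookkeeping in the Statement~2 case: showing that when no basic set with an element of order~$4$ generates all of $G$, the common radical behaviour actually yields a \emph{single} coherent wreath decomposition $S=U/L$, rather than several incompatible ones. The key technical tool here is Lemma~\ref{separat}: if two basic sets outside a candidate $U$ had incomparable radicals, applying Lemma~\ref{separat} to each (against $C_1\times H$ or against $H$) would produce two distinct $\mathcal{A}$-subgroups of order~$p$ in $H$, forcing $H=A\times B$ to be an $\mathcal{A}$-subgroup and contradicting the hypothesis that $C$ (hence, by a parallel argument on the complement, $H$) is not an $\mathcal{A}$-subgroup. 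A secondary difficulty is confirming in each wreath subcase that the restriction $\mathcal{A}_U$ genuinely matches one of $(\CA)$, $(\CAA)$, or $(\CAAA)$ as opposed to some other cyclic $S$-ring over $C_{4p}$; this is handled by enumerating the options of Lemma~\ref{circ} for $\mathcal{A}_U$ and using Lemma~\ref{circcaymin} to guarantee Cayley minimality of the section whenever cyclotomy is invoked. Once these decompositions are established, each falls under one of Statements~1--3 of Theorem~\ref{main2}, completing the proof.
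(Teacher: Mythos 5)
There is a genuine gap, and it sits at the heart of your argument. The hypothesis of Proposition~\ref{proof4} is only that $C$ is not an $\mathcal{A}$-subgroup; nothing is assumed about $H$, and your claim that $H$ is not an $\mathcal{A}$-subgroup ``by a parallel argument on the complement'' is false --- the four propositions split the cases by which of $C$, $H$ are $\mathcal{A}$-subgroups, and Proposition~\ref{proof4} must cover both possibilities for $H$. This error propagates: you repeatedly use ``the basic set would generate $C$ or $H$'' as a contradiction, but only generating $C$ is contradictory. In fact, when $U=\langle X\rangle$ has order $4p$, a basic set $Y$ outside $U$ with $C_1\nleq\rad(Y)$ does \emph{not} lead to a contradiction; applying Lemma~\ref{sch} twice to $Y$ produces an $\mathcal{A}$-subgroup $A\leq H$ with $A\nleq U$, and then either $|A|=p$, in which case $\mathcal{A}=\mathcal{A}_U\otimes\mathbb{Z}A$ by Lemma~\ref{tenspr} (Statement~2 of Theorem~\ref{main2}, a tensor product rather than the wreath product your first paragraph aims for), or $A=H$ is an $\mathcal{A}$-subgroup. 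The latter is the hardest case and your scheme has no room for it: one must show $X=Lc$ with $L_1=L\cap H$ of order $p$, prove that $\mathcal{A}_{G/L_1}$ is cyclotomic, and then run a divisibility argument ($p$ divides $|Y|$ but not $|Y^{\pi}|$, via Lemmas~\ref{intersection} and~\ref{sch}) to force $L_1\leq\rad(Y)$ for every basic set outside $U\cup(C_1\times H)$, yielding the $(C_1\times H)/L_1$-wreath product with a Cayley minimal cyclotomic section, i.e.\ Condition~$(\CAA)$. None of this is reachable from your outline, because you have argued the triggering case out of existence.

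A secondary structural point: the cleaner primary dichotomy is on $|U|=|\langle X\rangle|\in\{4p,4p^2\}$ for the basic set $X\ni c$, not on which statement of Proposition~\ref{proof3} holds. When $U=G$ the analysis is short and always lands in Condition~$(\CA)$ or rank~$2$; when $|U|=4p$ one needs the trichotomy just described for basic sets outside $U$ (namely $C_1\leq\rad(Y)$, or an order-$p$ $\mathcal{A}$-subgroup outside $U$, or $H$ an $\mathcal{A}$-subgroup), and only the first branch behaves the way your proposal anticipates. Your instinct to control radicals of basic sets outside $U$ and to invoke Lemma~\ref{circ}/\ref{circcaymin} for the section is right, but the case analysis as written would not close.
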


\begin{proof}
Let $X$ be a basic set of $\mathcal{A}$ containing~$c$. Put $U=\langle X \rangle$. Clearly, $U$ is an $\mathcal{A}$-subgroup. From the condition of the proposition it follows that $U\neq C$ and hence
 $$|U|\in \{4p,4p^2\}.$$

Let $|U|=4p^2$, i.e. $U=G$. One of the statements of Proposition~\ref{proof3} holds for $X$. If Statement~1 of Proposition~\ref{proof3} holds for $X$ then 
$$X=Hc~\text{or}~X=(C_1\times H)c$$ 
because $X$ generates $G$. In the former case $X^{-1}=Hc^{-1}$ and $C_1\times H=G\setminus (X\cup X^{-1})$ is an $\mathcal{A}$-subgroup. So $\mathcal{A}$ is the $S=(C_1\times H)/H$-wreath product. Since $|S|=2$, Statement~3 of Theorem~\ref{main2} holds for $\mathcal{A}$. In the latter case $\mathcal{A}=\mathcal{A}_{C_1\times H}\wr \mathcal{A}_{G/(C_1\times H)}$ and hence $\mathcal{A}$ and $S=(C_1\times H)/ (C_1 \times H)$ satisfy Condition~$(\CA)$. Therefore Statement~3 of Theorem~\ref{main2} holds for $\mathcal{A}$. Suppose that Statement~1 of Proposition~\ref{proof3} holds for $X$. Then $X=G\setminus \rad(X)$. If $\rad(X)=e$ then $\rk(\mathcal{A})=2$ and Statement~1 of Theorem~\ref{main2} holds for $\mathcal{A}$; otherwise $\mathcal{A}=\mathcal{A}_{\rad(X)}\wr \mathcal{A}_{G/\rad(X)}$ and $\mathcal{A}$ and $S=\rad(X)/\rad(X)$ satisfy Condition~$(\CA)$. Thus, Statement~3 of Theorem~\ref{main2} holds for $\mathcal{A}$.

Now let $|U|=4p$. The $S$-ring $\mathcal{A}_{G/U}$ is a $p$-$S$-ring by Lemma~\ref{minpring}. So
$$\mathcal{A}_{G/U}\cong \mathbb{Z}C_p.~\eqno(15)$$

\begin{lemm}\label{aux1}
In the above notations, let $Y$ be a basic set of $\mathcal{A}$ outside $U$. Then one of the following statements holds:

$(1)$ $C_1\leq \rad(Y)$;

$(2)$ there exists an $\mathcal{A}$-subgroup $A$ of order~$p$ with $A\nleq U$;

$(3)$ $H$ is an $\mathcal{A}$-subgroup.
\end{lemm}

\begin{proof}
One can write $Y$ in the form
$$Y=Y_1y_1\cup Y_2y_2\cup \ldots Y_sy_s,$$
where $y_i\in H$ and $Y_i\subseteq C$ for every $i\in\{1,\ldots,s\}$. Suppose that $C_1\nleq \rad(Y)$. Then there exists $k\in\{1,\ldots,s\}$ such that $C_1\nleq Y_k$. In this case $|Y_k|\leq 3$. Assume that $|Y_k|=2$. Then $Y_k$ contains one element, say $v$, from $C_1$ and one element, say $w$, from $C\setminus C_1$ because $C_1\nleq \rad(Y)$. There exists $m\in \mathbb{Z}$ such that $m\equiv 3~\mod~4$ and $m\equiv 1~\mod~p$. Then $(y_kv)^m=y_kv\in Y\cap Y^{(m)}$. The set $Y^{(m)}$ is basic by Lemma~\ref{burn}. So $Y^{(m)}=Y$. On the other hand, $(y_kw)^m=y_kw^{-1}\in Y^{(m)}\setminus Y$, a contradiction. Therefore $|Y_k|\in\{1,3\}$. This implies that the set $T=(Y^{[2]})^{[2]}$ contains $y_k^4$. Clearly, $T\subseteq H$. From Lemma~\ref{sch} it follows that $T$ is an $\mathcal{A}$-set and hence $A=\langle T \rangle$ is an $\mathcal{A}$-subgroup with $A\leq H$ and $A\nleq U$. If $|A|=p$ then Statement~2 of the lemma holds; if $A=H$ then Statement~3 of the lemma holds. The lemma is proved. 
\end{proof}

\begin{lemm}\label{aux2}
In the above notations, suppose that $C_1\leq \rad(Y)$ for every $Y\in \mathcal{S}(\mathcal{A})$ outside $U$. Then Statement~3 of Theorem~\ref{main2} holds for $\mathcal{A}$.
\end{lemm}

\begin{proof}
Let $L$ be the least $\mathcal{A}$-subgroup containing $C_1$. Since $C_1\leq U$, we have $L\leq U$. From the assumption of the lemma it follows that $\mathcal{A}$ is the $S=U/L$-wreath product. Note that $L\neq C$ because $C$ is not an $\mathcal{A}$-subgroup. So $|L|\in\{2,2p,4p\}$. If $|L|\in\{2p,4p\}$ then $|S|\leq 2$ and hence $\mathcal{A}$ and $S$ satisfy Condition~$(\CA)$. So Statement~3 of Theorem~\ref{main2} holds for $\mathcal{A}$. 

Let $|L|=2$, i.e. $L=C_1$. If Statement~2 of Proposition~\ref{proof3} holds for $X$ then $\rad(X)=C_1$ and $X=U\setminus C_1$. Therefore 
$$\mathcal{A}=\mathcal{A}_{C_1}\wr \mathcal{A}_{G/C_1}~\eqno(16)$$ 
and $\mathcal{A}$ and $S=C_1/C_1$ satisfy Condition~$(\CA)$. So Statement~3 of Theorem~\ref{main2} holds for $\mathcal{A}$. 

Suppose that Statement~1 of Proposition~\ref{proof3} holds for $X$. Since $|U|=4p$, we have $X=L_1c$ for some $\mathcal{A}$-subgroup $L_1$ with $|L_1|\in\{p,2p\}$. If $|L_1|=2p$ then $L_1\geq C_1$ and $\mathcal{A}$ is the $S_1=L_1/C_1$-wreath product. Since $L_1$ is cyclic and $C_1$ is an $\mathcal{A}_{L_1}$-subgroup, Lemma~\ref{circ} implies that 
$$\mathcal{A}_{L_1}=\mathcal{A}_{C_1}\otimes \mathcal{A}_{L_2}~\text{or}~\mathcal{A}_{L_1}=\mathcal{A}_{C_1}\wr \mathcal{A}_{L_1/C_1},$$ 
where $L_2$ is a subgroup of $L_1$ of order~$p$. In the former case the $S$-rings $\mathcal{A}_{C_1}$ and $\mathcal{A}_{L_2}$ are cyclotomic by Lemma~\ref{circ}. This implies that $\mathcal{A}_{L_1}$ is cyclotomic as the tensor product of two cyclotomic $S$-rings. So $\mathcal{A}_{S_1}$ is also cyclotomic and  $\mathcal{A}_{S_1}$ is Cayley minimal by Lemma~\ref{circcaymin}. Therefore $\mathcal{A}$ and $S_1$ satisfy Condition~$(\CAA)$ and hence Statement~3 of Theorem~\ref{main2} holds for $\mathcal{A}$. In the latter case Eq.~(16) holds for $\mathcal{A}$ and we are done.

Suppose that $|L_1|=p$. Put $U_1=C_1\times L_1$. The group $U_1$ is an $\mathcal{A}$-subgroup. Due to Statement~2 of Lemma~\ref{tenspr}, we obtain that $\mathcal{A}_{U_1}=\mathcal{A}_{C_1}\otimes \mathcal{A}_{L_1}$. Note that $\mathcal{A}_U$ is the $U_1/L_1$-wreath product and $\mathcal{A}_{U/L_1}\cong \mathbb{Z}C_4$ because $X=L_1c$, $X^{-1}=L_1c^{-1}$, and $U_1=U\setminus (X\cup X^{-1})$. Thus, $\mathcal{A}$ and $S$ satisfy Condition~$(\CAAA)$ and Statement~3 of Theorem~\ref{main2} holds for $\mathcal{A}$. The lemma is proved.
\end{proof}

In view of Lemma~\ref{aux2}, we may assume that there exists $Y\in \mathcal{S}(\mathcal{A})$ outside $U$ with $C_1\nleq \rad(Y)$. Then one of the Statements~2-3 of Lemma~\ref{aux1} holds. Suppose that Statement~2 of Lemma~\ref{aux1} holds, i.e. there exists an $\mathcal{A}$-subgroup $A$ of order~$p$ with $A\nleq U$. From Eq.~(15) it follows that $\mathcal{A}_A=\mathbb{Z}A$. Therefore $\mathcal{A}=\mathcal{A}_U \otimes \mathbb{Z}_A$ by Statement~2 of Lemma~\ref{tenspr} and Statement~2 of Theorem~\ref{main2} holds for $\mathcal{A}$.

Suppose that Statement~3 of Lemma~\ref{aux1} holds, i.e. $H$ is an $\mathcal{A}$-subgroup. Then $\mathcal{A}_{G/H}$ is a $2$-$S$-ring by Lemma~\ref{minpring}. So 
$$\mathcal{A}_{G/H}=\mathbb{Z}(G/H)~\text{or}~\mathcal{A}_{G/H}\cong \mathbb{Z}C_2\wr\mathbb{Z}C_2~\eqno(17)$$ by Lemma~\ref{p2}. In both cases $C_1\times H$ is an $\mathcal{A}$-subgroup. If Statement~2 of Proposition~\ref{proof3} holds for $X$ then $X\cap (C_1\times H)\neq \varnothing$ which contradicts to the fact that $C_1\times H$ is an $\mathcal{A}$-subgroup. Therefore Statement~1 of Proposition~\ref{proof3} holds for $X$. Since $|U|=4p$, we have $X=Lc$, where $L$ is an $\mathcal{A}$-subgroup with $|L|\in\{p,2p\}$. Since $H$ is an $\mathcal{A}$-subgroup, $L_1=L\cap H$ is also an $\mathcal{A}$-subgroup. Clearly, $|L_1|=p$. The canonical epimorphism from $G$ to $G/L_1$ is denoted by $\pi$. 

\begin{lemm}\label{aux3}
In the above notations, the $S$-ring $\mathcal{A}_{G/L_1}$ is cyclotomic.
\end{lemm}

\begin{proof}
The $S$-ring $\mathcal{A}_{C^{\pi}}\cong \mathcal{A}_{G/H}$ is a $2$-$S$-ring by Lemma~\ref{minpring}. So $\mathcal{A}_{C^{\pi}}$ is cyclotomic by Lemma~\ref{p2}. Since $|H^{\pi}|=p$, the $S$-ring $\mathcal{A}_{H^{\pi}}$ is cyclotomic by Lemma~\ref{circ}. The group $G/L_1$ is cyclic. Therefore one of the statements of Lemma~\ref{circ} holds for $\mathcal{A}_{G/L_1}$. Statement~1 and Statement~4 of Lemma~\ref{circ} can not hold for $\mathcal{A}_{G/L_1}$ because  $C^{\pi}$ and $H^{\pi}$ are $\mathcal{A}_{G/L_1}$-subgroups. This yields that $\mathcal{A}_{G/L_1}$ is cyclotomic or  $\mathcal{A}_{G/L_1}=\mathcal{A}_{C^{\pi}}\otimes \mathcal{A}_{H^{\pi}}$. In the latter case $\mathcal{A}_{G/L_1}$ is also cyclotomic as the tensor product of two cyclotomic $S$-rings. The lemma is proved.
\end{proof}

Let $Y$ be a basic set of $\mathcal{A}$ outside $U\cup (C_1\times H$). The number $\mu=|Hy\cap Y|$ does not depend on $x\in X$ by Lemma~\ref{intersection}. If $\mu$ is not divisible by~$p$ then $Y^{[p]}\cap(C\setminus C_1)\neq \varnothing$. From Lemma~\ref{sch} it follows that $Y^{[p]}$ is an $\mathcal{A}$-set and hence $C=\langle Y^{[p]} \rangle$ is an $\mathcal{A}$-subgroup, a contradiction with the assumption of the proposition. Therefore $p~|~\mu$ and hence 
$$p~|~|Y|.~\eqno(18)$$

The groups $C^{\pi}$ and $H^{\pi}$ are $\mathcal{A}_{G/L_1}$-subgroups and the set $Y^{\pi}$ lies outside $C^{\pi}\cup H^{\pi}$. The sets $Y^{\pi}_{C^{\pi}}$ and $Y^{\pi}_{H^{\pi}}$ are basic sets of $\mathcal{A}_{G/L_1}$ by Statement~1 of Lemma~\ref{tenspr}. In view of Eq.~(17), we have $Y^{\pi}_{C^{\pi}}\in \{1,2\}$. This yields that $|Y^{\pi}|\in\{|Y^{\pi}_{H^{\pi}}|,2|Y^{\pi}_{H^{\pi}}|\}$. One can see that $|Y^{\pi}_{H^{\pi}}|<p$ because $|H^{\pi}|=p$. Thus,
$$p~\not|~|Y^{\pi}|.~\eqno(19)$$

The number $\nu=|L_1y\cap Y|$ does not depend on $y\in Y$ by Lemma~\ref{intersection}. Also $\nu\leq p$ because $|L_1|=p$. Note that $|Y|=\nu|Y^{\pi}|$. Due to Eqs.~(17) and~(18), we conclude that $p~|~\nu$. Therefore $\nu=p$. This means that $L_1\leq\rad(Y)$. 

The above discussion implies that $L_1\leq\rad(Y)$ for every $Y\in \mathcal{S}(\mathcal{A})$ outside $U\cup (C_1\times H)$. Since also $L_1\leq \rad(X)\cap \rad(X^{-1})$, we conclude that $\mathcal{A}$ is the $S=(C_1\times H)/L_1$-wreath product. The $S$-ring $\mathcal{A}_{G/L_1}$ is cyclotomic by Lemma~\ref{aux3}. So $\mathcal{A}_S$ is also cyclotomic. Note that $S$ is cyclic because $|S|=2p$. Therefore $\mathcal{A}_S$ is Cayley minimal by Lemma~\ref{circcaymin}. Thus, $\mathcal{A}$ and $S$ satisfy Condition~$(\CAA)$ and Statement~3 of Theorem~\ref{main2} holds for $\mathcal{A}$. The proposition is proved.
\end{proof}

\begin{proof}[Proof of Theorem~\ref{main2}]
The statement of the theorem follows from Propositions~\ref{proof1},~\ref{proof2},~\ref{proof4}.
\end{proof}

\section{Proof of Theorem~\ref{main}}
 
As in the previous section, $G$ is a finite group isomorphic to $C_4\times C_p^2$. If $p=2$ then $G$ does not satisfy the necessary condition of  $\DCI$-property from~\cite[Theorem~1.1]{KM} and hence $G$ is not $\DCI$. Let $p$ be odd. In view of Lemma~\ref{cimin}, it is sufficient to prove that $\mathcal{A}=V(K,G)$ is a $\CI$-$S$-ring for every $K \in \Sup_2^{\rm min}(G_\r)$.

\begin{lemm}\label{section}
Let $S$ be an $\mathcal{A}$-section of $G$ and $S\neq G$. Then $\mathcal{A}_S$ is a $\CI$-$S$-ring
\end{lemm}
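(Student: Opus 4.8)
The plan is to avoid analysing the internal structure of $\mathcal{A}_S$ and instead reduce the statement to the already-known $\DCI$-property of the small section groups, using schurity as the bridge.

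First I would determine the possible isomorphism types of a proper $\mathcal{A}$-section $S=U/L$. Since $G\cong C_4\times C_p^2$ is abelian of order $4p^2$ and exponent $4p$, the group $S$ is abelian with $|S|$ dividing $4p^2$ and exponent dividing $4p$; its Sylow $2$-subgroup is a quotient of a subgroup of $C_4$ (hence $1$, $C_2$, or $C_4$) and its Sylow $p$-subgroup is a quotient of a subgroup of $C_p^2$ (hence $1$, $C_p$, or $C_p^2$). Running through the products and discarding $G$ itself, one sees that up to isomorphism $S$ is one of $1$, $C_2$, $C_4$, $C_p$, $C_p^2$, $C_{2p}$, $C_{4p}$, $C_2\times C_p^2$; in particular $C_{p^2}$ and $C_8$ do not occur. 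This is a routine group-theoretic check.

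Next I would note that each of these eight groups is a $\DCI$-group: the groups $1$ and $C_2$ trivially; the cyclic groups $C_4$, $C_p$, $C_{2p}$, $C_{4p}$ by Muzychuk's classification of cyclic $\DCI$-groups \cite{M1,M2}, since (using that $p$ is odd) their orders $4=2\cdot 2$, $p$, $2p$, $4p=2\cdot 2p$ are square-free or twice a square-free number; and $C_p^2$ and $C_2\times C_p^2$ by the known results on abelian $\DCI$-groups from the class $\mathcal{E}$ recalled in the Introduction.

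Finally I would transfer this from $S$ to $\mathcal{A}_S$. Since $\mathcal{A}=V(K,G)$ is schurian, so is $\mathcal{A}_S$ (Section~3), say $\mathcal{A}_S=V(M,S)$ with $M\in\Sup(S_\r)$. Because $S$ is a $\DCI$-group, every schurian $S$-ring over $S$ is a $\CI$-$S$-ring: indeed, $\mathcal{A}_S$ being $\CI$ is equivalent to all regular subgroups of $\aut(\mathcal{A}_S)$ isomorphic to $S$ being conjugate in $\aut(\mathcal{A}_S)$, and this conjugacy follows from the $\DCI$-property of $S$ via Babai's criterion applied to the Cayley colour graph of $\mathcal{A}_S$ (this is the standard converse of Lemma~\ref{cimin}). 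Hence $\mathcal{A}_S$ is a $\CI$-$S$-ring, as required. The only non-routine ingredient is the previously established $\DCI$-property of $C_p^2$ and $C_2\times C_p^2$; granting that, the argument is just the enumeration of the first two steps together with this transfer principle, so I expect no genuine obstacle here.
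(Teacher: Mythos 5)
Your enumeration of the possible section types and the general strategy (reduce to known results about the small groups that occur as sections) match the paper, which disposes of the lemma by citing \cite[Theorem~3.2]{M2} for cyclic sections and \cite[Remark~3.4]{KR2} for the non-cyclic ones. However, your third step contains a genuine gap: the implication ``$S$ is a $\DCI$-group $\Rightarrow$ every schurian $S$-ring over $S$ is a $\CI$-$S$-ring'' is not a theorem, and the ``standard converse of Lemma~\ref{cimin}'' that you invoke does not exist. Your reformulation of the $\CI$-$S$-ring property as conjugacy of the regular subgroups of $\aut(\mathcal{A}_S)$ isomorphic to $S$ is fine for a schurian ($2$-closed) $S$-ring; the problem is deducing that conjugacy from the $\DCI$-property of $S$. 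Babai's criterion applied to a single Cayley digraph $\cay(S,T)$ gives conjugacy inside $\aut(\cay(S,T))$, but for an $S$-ring $\mathcal{B}$ of rank greater than $2$ one has $\aut(\mathcal{B})=\bigcap_{X\in\mathcal{S}(\mathcal{B})}\aut(\cay(S,X))$, which is in general a proper subgroup of each $\aut(\cay(S,X))$, and conjugacy in each of the larger groups does not descend to their intersection. This is precisely why the $\CI$-property for $S$-rings (equivalently, for coloured Cayley digraphs) is a priori strictly stronger than the $\DCI$-property and why Lemma~\ref{cimin} is stated as a one-way implication. For instance, $C_{2p}$ is a $\DCI$-group, yet $\gcd(2p,\varphi(2p))=\gcd(2p,p-1)>1$, so by P\'alfy's classification there is a permutation group of degree $2p$ containing two non-conjugate regular cyclic subgroups; that no \emph{$2$-closed} such group exists is the content of Muzychuk's theorem, not a formal consequence of $C_{2p}$ being $\DCI$.

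The step can be repaired, but only by replacing your transfer principle with results that are genuinely formulated at the level of $2$-closed groups or Schur rings: \cite[Theorem~3.2]{M2} yields that every schurian $S$-ring over a cyclic group of order $k$ or $2k$ with $k$ square-free is a $\CI$-$S$-ring (covering the sections $C_2$, $C_4$, $C_p$, $C_{2p}$, $C_{4p}$), and \cite[Remark~3.4]{KR2} does the same for the non-cyclic sections $C_p^2$ and $C_2\times C_p^2$. With those citations substituted for the claimed converse of Lemma~\ref{cimin}, your argument becomes the paper's proof; without them, the key step is unsupported.
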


\begin{proof}
If $S$ is cyclic then the statement of the lemma follows from~\cite[Theorem~3.2]{M2}; if $S$ is not cyclic then the statement of the lemma follows from~\cite[Remark~3.4]{KR2}.
\end{proof}

One of the statements of Theorem~\ref{main2} holds for $\mathcal{A}$. If Statement~1 of Theorem~\ref{main2} holds for $\mathcal{A}$ then, obviously, $\mathcal{A}$ is $\CI$. If Statement~2 of Theorem~\ref{main2} holds for $\mathcal{A}$ then $\mathcal{A}$ is $\CI$ by Lemma~\ref{section} and Lemma~\ref{citens}. If Statement~2 of Theorem~\ref{main2} holds for $\mathcal{A}$ then $\mathcal{A}$ is $\CI$ by Lemma~\ref{section} and one of the Lemmas~\ref{trivial},~\ref{cayleymin},~\ref{restr}. The theorem is proved.

\end{document}